\newtheorem{thm}{Theorem}[section]
\newtheorem{prop}[thm]{Proposition}
\newtheorem{lem}[thm]{Lemma}
\newtheorem{rem}[thm]{Remark}
\numberwithin{equation}{section}
\begin{document}

\title{Generalized $q$-Painlev\'{e} VI systems of type $(A_{2n+1}+A_1+A_1)^{(1)}$ arising from cluster algebra}
\date{}
\author{Naoto Okubo}
\address{Department of Physics and Mathematics, Aoyama Gakuin University, 5-10-1, Fuchinobe, Chuo-ku, Sagamihara-shi, Kanagawa 252-5258, Japan}
\email{okubo@gem.aoyama.ac.jp}
\author{Takao Suzuki}
\address{Department of Mathematics, Kindai University, 3-4-1, Kowakae, Higashi-Osaka, Osaka 577-8502, Japan}
\email{suzuki@math.kindai.ac.jp}

\maketitle

\begin{abstract}
In this article we formulate a group of birational transformations which is isomorphic to an extended affine Weyl group of type $(A_{2n+1}+A_1+A_1)^{(1)}$ with the aid of mutations and permutations of vertices to a mutation-periodic quiver on a torus.
This group provides a class of higher order generalizations of Jimbo-Sakai's $q$-Painlev\'e VI equation as translations on a root lattice.
Then the known three systems are obtained again; the $q$-Garnier system, a similarity reduction of the lattice $q$-UC hierarchy and a similarity reduction of the $q$-Drinfeld-Sokolov hierarchy.

Key Words: Discrete Painlev\'{e} equation, Affine Weyl group, Cluster algebra.

2010 Mathematics Subject Classification: 39A13, 13F60, 17B80, 34M55, 37K35.
\end{abstract}

\section{Introduction}\label{Sec:Intro}

The Painlev\'e equations were first discovered at the beginning of twentieth century.
Painlev\'e and Gambier tried to classify second order meromorphic ordinary differential equations without movable branch points.
This property of a differential equation is now called the Painlev\'e property.
As a result, six types of nonlinear differential equations were obtained.
At around the same time, these equations were given by the deformation theory of second order linear ordinary differential equations.

Nearly a century later, Grammaticos, Ramani and Papageorgiou proposed a discrete analogue of the Painlev\'e property called singularity confinement in \cite{GRP}.
That was a trigger for the discovery of various discrete Painlev\'e equations.
Then it became the next problem to reveal how many second order discrete Painlev\'e equations exist.
An answer to this problem was given by Sakai in \cite{Sak1}.
According to that, the second order discrete Painlev\'e equations are classified by the geometry of rational surfaces called the initial value spaces, which are characterized by pairs of affine root systems.
We list them in the following table.
\[\renewcommand{\arraystretch}{1.2}\begin{array}{|c|ccccccc|}\hline
	\text{Difference type} & \multicolumn{7}{l|}{\text{Symmetry/Surface type}} \\\hline
	\text{elliptic} & E_8/A_0 & & & & & & \\
	\text{multiplicative} & E_8/A_0 & E_7/A_1 & E_6/A_2 & D_5/A_3 & A_4/A_4 & A_2+A_1/A_5 & A_1+{A_1\atop{|\alpha|^2=14}}/A_6 \\
	& {A_1\atop{|\alpha|^2=8}}/A_7 & A_1/A_7 & A_0/A_8 & & & & \\
	\text{additive} & E_8/A_0 & E_7/A_1 & E_6/A_2 & D_4/D_4 & A_3/D_5 & A_1+A_1/D_6 & A_2/E_6 \\
	& {A_1\atop{|\alpha|^2=4}}/D_7 & A_1/E_7 & A_0/D_8 & A_0/E_8 & & & \\\hline
\end{array}\]
Note that all of the continuous Painlev\'e equations can be regarded as continuous limits of some multiplicative ($q$-difference) Painlev\'e equations or continuous flows commuting with some additive Painlev\'e equations.

The cluster algebra was introduced by Fomin and Zelevinsky in \cite{FZ1,FZ2}.
Let $Q$ be a quiver with $N$ vertices.
We assume that a quiver doesn't have any loop or any 2-cycle.
Also let ${\boldsymbol x}=(x_1,\ldots,x_N)$ be an $N$-tuple of cluster variables and ${\boldsymbol y}=(y_1,\ldots,y_N)$ an $N$-tuple of coefficients.
We call the triple $(Q,{\boldsymbol x},{\boldsymbol y})$ a seed.
Let $\Lambda=\left(\lambda_{i,j}\right)_{i,j=1}^N$ be a skew-symmetric matrix corresponding to the quiver $Q$.
In other words, if there are $l$ arrows from $i$ to $j$, then we set $\lambda_{i,j}=l$ and $\lambda_{j,i}=-l$.
For each $k\in\{1,\ldots,N\}$, we define a mutation $\mu_k:(Q,{\boldsymbol x},{\boldsymbol y})\to(Q',{\boldsymbol x}',{\boldsymbol y}')$ by
\begin{align*}
	\lambda_{i,j}' &= \left\{\begin{array}{ll}
		-\lambda_{i,j} & (i=k\vee j=k) \\[4pt]
		\lambda_{i,j}+\lambda_{i,k}\,\lambda_{k,j} & (\lambda_{i,k}>0\wedge\lambda_{k,j}>0) \\[4pt]
		\lambda_{i,j}-\lambda_{i,k}\,\lambda_{k,j} & (\lambda_{i,k}<0\wedge\lambda_{k,j}<0) \\[4pt]
		\lambda_{i,j} & (\text{otherwise})
	\end{array}\right., \\
	x_i' &= \left\{\begin{array}{ll}
		\displaystyle\frac{1}{1+y_k}\frac{\prod_{\lambda_{k,j}>0}x_j^{\lambda_{k,j}}}{x_k}+\frac{1}{1+y_k^{-1}}\frac{\prod_{\lambda_{k,j}<0}x_j^{-\lambda_{k,j}}}{x_k} & (i=k) \\[4pt]
		x_i & (i\neq k)
	\end{array}\right., \\
	y_i' &= \left\{\begin{array}{ll}
		y_k^{-1} & (i=k) \\[4pt]
		y_i\,(1+y_k^{-1})^{-\lambda_{k,i}} & (\lambda_{k,i}\geq0) \\[4pt]
		y_i\,(1+y_k)^{-\lambda_{k,i}} & (\lambda_{k,i}<0) \\[4pt]
		y_i & (\text{otherwise})
	\end{array}\right..
\end{align*}
The cluster algebra with coefficients is defined as a variety of commutative ring $\mathbb{Z}({\boldsymbol y}_0)[{\boldsymbol x}|{\boldsymbol x}\in X]$, where $X$ is a set of all cluster variables given by iterative mutations to an initial seed $(Q_0,{\boldsymbol x}_0,{\boldsymbol y}_0)$.

The property of a quiver called mutation-period is that iterative mutations give a permutation of vertices of the quiver.
It was introduced by Nakanishi in \cite{Nak}.
On the other hand, as is seen above, new cluster variables $x_i'$ (resp. coefficients $y_i'$) are rational in original cluster variables and coefficients $x_i,y_i$ (resp. coefficients $y_i$).
Thanks to these two properties, some mutation-periodic quivers become sources of discrete integrable systems (\cite{IIKKN1,IIKKN2,IIKNS,IN,Nob,O1}) or $q$-Painlev\'e equations (\cite{BGM,HI,O2}).

Higher order generalizations of the $q$-Painlev\'e equations have been proposed from some points of view; birational representations of affine Weyl groups (\cite{KNY1,KNY2,M,TT}), a cluster mutation (\cite{HI}), a $q$-analogue of the isomonodromy deformation (\cite{Sak2}), similarity reductions of discrete integrable systems (\cite{Suz2,Suz3,T3}) and a Pad\'e method (\cite{NagY1,NagY2}).
However there doesn't exist any theory which governs all of them unlike in the case of second order.
Our purpose is to establish a good classification theory based on the affine root systems and the cluster algebra.

\begin{figure}
	\begin{center}
	\begin{picture}(90,90)
		\put(0,0){\small$8$}\put(2,11){\vector(0,1){66}}\put(6,11){\vector(1,4){12}}
		\put(84,0){\small$3$}\put(78,2){\vector(-1,0){66}}\put(78,7){\vector(-4,1){48}}
		\put(18,18){\small$5$}\put(18,29){\vector(-1,4){12}}\put(22,29){\vector(0,1){30}}
		\put(66,18){\small$2$}\put(60,23){\vector(-1,0){30}}\put(60,18){\vector(-4,-1){48}}
		\put(18,64){\small$1$}\put(30,66){\vector(1,0){30}}\put(30,70){\vector(4,1){48}}
		\put(66,64){\small$6$}\put(68,59){\vector(0,-1){30}}\put(72,59){\vector(1,-4){12}}
		\put(0,82){\small$4$}\put(12,82){\vector(4,-1){48}}\put(12,86){\vector(1,0){66}}
		\put(84,82){\small$7$}\put(84,77){\vector(-1,-4){12}}\put(88,77){\vector(0,-1){66}}
	\end{picture}
	\caption{$q$-$P_{\rm VI}$ quiver}\label{Fig:q-PVI}
	\end{center}
\end{figure}
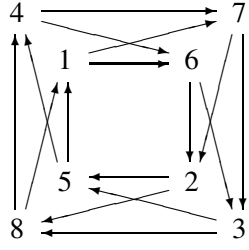

As a first step, we focus on the $q$-Painlev\'e equation of type $D_5/A_3$ which is also known as the $q$-Painlev\'e VI equation.
The $q$-Painlev\'e VI equation was first proposed by Jimbo and Sakai as a $q$-analogue of the isomonodromy deformation of the second order Fuchsian differential equation with four regular singular points in \cite{JS}.
Afterward, it was given as a birational representation of an extended affine Weyl group of type $D_5^{(1)}$ in \cite{Sak1,TM}.
Since the $q$-Painlev\'e VI equation and the corresponding affine Weyl group have been already derived from the mutation-periodic quiver in Figure \ref{Fig:q-PVI} in \cite{BGM,O2}, our first aim is to extend these previous works.

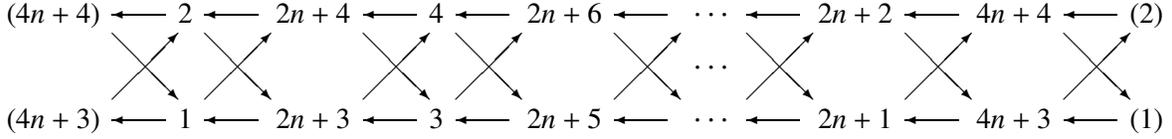
\begin{figure}
	\begin{center}
	\begin{picture}(440,50)
		\put(0,0){\small$(4n+3)$}\put(0,40){\small$(4n+4)$}
		\put(40,11){\vector(1,1){25}}\put(40,36){\vector(1,-1){25}}\put(60,3){\vector(-1,0){20}}\put(60,43){\vector(-1,0){20}}
		\put(65,0){\small$1$}\put(65,40){\small$2$}
		\put(75,11){\vector(1,1){25}}\put(75,36){\vector(1,-1){25}}\put(95,3){\vector(-1,0){20}}\put(95,43){\vector(-1,0){20}}
		\put(102,0){\small$2n+3$}\put(102,40){\small$2n+4$}
		\put(135,11){\vector(1,1){25}}\put(135,36){\vector(1,-1){25}}\put(155,3){\vector(-1,0){20}}\put(155,43){\vector(-1,0){20}}
		\put(160,0){\small$3$}\put(160,40){\small$4$}
		\put(170,11){\vector(1,1){25}}\put(170,36){\vector(1,-1){25}}\put(190,3){\vector(-1,0){20}}\put(190,43){\vector(-1,0){20}}
		\put(197,0){\small$2n+5$}\put(197,40){\small$2n+6$}
		\put(230,11){\vector(1,1){25}}\put(230,36){\vector(1,-1){25}}\put(250,3){\vector(-1,0){20}}\put(250,43){\vector(-1,0){20}}
		\put(260,0){$\cdots$}	\put(260,20){$\cdots$}\put(260,40){$\cdots$}
		\put(280,11){\vector(1,1){25}}\put(280,36){\vector(1,-1){25}}\put(300,3){\vector(-1,0){20}}\put(300,43){\vector(-1,0){20}}
		\put(307,0){\small$2n+1$}\put(307,40){\small$2n+2$}
		\put(340,11){\vector(1,1){25}}\put(340,36){\vector(1,-1){25}}\put(360,3){\vector(-1,0){20}}\put(360,43){\vector(-1,0){20}}
		\put(367,0){\small$4n+3$}\put(367,40){\small$4n+4$}
		\put(400,11){\vector(1,1){25}}\put(400,36){\vector(1,-1){25}}\put(420,3){\vector(-1,0){20}}\put(420,43){\vector(-1,0){20}}
		\put(425,0){\small$(1)$}\put(425,40){\small$(2)$}
	\end{picture}
	\caption{Generalized $q$-$P_{\rm VI}$ quiver}\label{Fig:Gen_q-PVI}
	\end{center}
\end{figure}

In this article we consider a mutation-periodic quiver in Figure \ref{Fig:Gen_q-PVI} which is a natural extension of that in Figure \ref{Fig:q-PVI}.
Then we find some compositions of iterative mutations and a permutation of vertices such that the quiver is invariant under their actions.
These compositions turn out to be generators of a group of birational transformations which is isomorphic to an extended affine Weyl group of type $(A_{2n+1}+A_1+A_1)^{(1)}$.
We also give its abelian normal subgroup generated by translations on a root lattice.
This group of translations provides a class of generalized $q$-Painlev\'e VI systems containing the known three systems; Sakai's $q$-Garnier system (\cite{Sak2}), Tsuda's $q$-Painlev\'e system arising from the $q$-LUC hierarchy (\cite{T3}) and the $q$-Painlev\'e system $q$-$P_{(n+1,n+1)}$ arising from the $q$-DS hierarchy (\cite{Suz2,Suz3}).

\begin{rem}\label{Rem:MOT}
In a recent work \cite{MOT} a birational representation of an affine Weyl group of type $(A_{LM-1}+A_{LN-1}+A_{L-1})^{(1)}$, where $M$ and $N$ are coprime, is derived from a mutation-periodic quiver on a torus.
It contains the result of \cite{KNY1} as the case $L=1$.
Moreover, in the case $(L,M,N)=(2,1,n+1)$, the quiver coincides with that in Figure \ref{Fig:Gen_q-PVI}.
\end{rem}

\begin{rem}
It is shown in \cite{NagY2} that both the $q$-Garnier system and $q$-$P_{(n+1,n+1)}$ are derived from the same linear $q$-difference equation.
There is a difference in directions of discrete time evolutions between them.
Besides, both Tsuda's $q$-Painlev\'e system and $q$-$P_{(n+1,n+1)}$ reduce in a continuous limit $q\to1$ to the same Hamiltonian system given in \cite{FS1,Suz1,T4}, which is a representative isomonodromy deformation equation as well as the continuous Garnier system is.
The result of this article clarifies the connection between those systems in a framework of the affine root systems and the cluster algebra.
\end{rem}

\begin{rem}
The system given in this article is actually a deautonomization of the dimer (or Goncharov-Kenyon) integrable system defined by the Newton polygon which is the right triangle with catheti of integer lengths $2$ and $2n$.
Three factors in the reducible affine Weyl group correspond to the integer points on the sides of the triangle, that is the affine Weyl group of type $A_{N-1}^{(1)}$ arises from $N-1$ integer points on one side.
For its detail, see \cite{BGM,GK,ILP1,ILP2}.
\end{rem}

\begin{rem}
Masuda proposed another generalized $q$-Painlev\'e VI system called the $q$-Sasano system as a birational representation of an extended affine Weyl group of type $D^{(1)}_{2n+5}$ in \cite{M}.
A relationship between the $q$-Sasano system and the cluster algebra is investigated in \cite{MOT}.
\end{rem}

This article is organized as follows.
In Section \ref{Sec:Aff_Wey}, we recall the definition of the affine Weyl group of type $A_{N-1}^{(1)}$.
In Section \ref{Sec:Bir_Rep}, we formulate a birational representation of an extended affine Weyl group of type $(A_{2n+1}+A_1+A_1)^{(1)}$.
In Section \ref{Sec:Gen_q-PVI}, we give a group of translations which provides a class of generalized $q$-Painlev\'e VI systems.
In Section \ref{Sec:q-FST}, \ref{Sec:q-Garnier} and \ref{Sec:q-Tsuda}, we derive $q$-$P_{(n+1,n+1)}$, the $q$-Garnier system and Tsuda's $q$-Painlev\'e system respectively.
Since some formulas or theorems are shown by large calculations, we place them in appendices.

\section{Affine Weyl group of type $A_{N-1}^{(1)}$}\label{Sec:Aff_Wey}

In this article we denote the quotient ring $\mathbb{Z}/N\mathbb{Z}$ by $\mathbb{Z}_N$ for the sake of simplicity.
Let $A=\left(a_{i,j}\right)_{i,j\in\mathbb{Z}_N}$ be a Cartan matrix defined by
\[
	a_{0,0} = a_{1,1} = 2,\quad
	a_{0,1} = a_{1,0} = -2,
\]
for $N=2$ and
\[
	a_{i,j} = \left\{\begin{array}{ll}
		2 & (i=j) \\[4pt]
		-1 & (i\equiv j\pm1\bmod N) \\[4pt]
		0 & (\text{otherwise})
	\end{array}\right.,
\]
for $N\geq3$.
Then a Weyl group $W(A)$ is the affine Weyl group of type $A_{N-1}^{(1)}$ with the generators $r_i$ $(i\in\mathbb{Z}_N)$ and the fundamental relations
\[
	r_0^2 = r_1^2 = 1,
\]
for $N=2$ and
\begin{align*}
	&r_i^2 = 1\quad (i\in\mathbb{Z}_N), \\
	&r_i\,r_j\,r_i = r_j\,r_i\,r_j\quad (i,j\in\mathbb{Z}_N,\ a_{i,j}=-1), \\
	&r_i\,r_j = r_j\,r_i\quad (i,j\in\mathbb{Z}_N,\ a_{i,j}=0),
\end{align*}
for $N\geq3$.
We denote the affine Weyl group of type $A_{N-1}^{(1)}$ by $W(A_{N-1}^{(1)})$.

The group $W(A_{N-1}^{(1)})$ can be realized as that generated by simple reflections acting on a root lattice.
Let $\alpha_i$ $(i\in\mathbb{Z}_N)$ be multiplicative simple roots and $q$ a multiplicative null root.
We assume that
\[
	\alpha_0\ldots\alpha_{N-1} = q.
\]
Also let $r_i$ $(i\in\mathbb{Z}_N)$ be simple reflections acting on the simple roots as
\[
	r_j(\alpha_i) = \alpha_i\,\alpha_j^{-a_{j,i}}\quad (i,j\in\mathbb{Z}_N).
\]
Then the group $\langle r_0,\ldots,r_{N-1}\rangle$ is isomorphic to $W(A_{N-1}^{(1)})$, that is the simple reflections satisfy the fundamental relations.

Let $\pi$ be a cyclic permutation acting on the natural numbers defined by
\[
	\pi = (0,1,2,\ldots,N-2,N-1) = (N-2,N-1)\ldots(1,2)(0,1),
\]
which preserves the Cartan matrix $A$ as $a_{\pi(i),\pi(j)}=a_{i,j}$.
It is called a Dynkin diagram automorphism and can be lifted to an action on the simple roots as
\[
	\pi(\alpha_i) = \alpha_{i+1}\quad (i\in\mathbb{Z}_N).
\]
Then we obtain an extended affine Weyl group $W(A_{N-1}^{(1)})\rtimes\langle\pi\rangle$ with fundamental relations
\[
	\pi^{N+1} = 1,\quad
	\pi\,r_i = r_{i+1}\,\pi\quad (i\in\mathbb{Z}_N).
\]

Let $T_i$ $(i\in\mathbb{Z}_N)$ be transformations defined by
\begin{equation}\label{Eq:Translation}
	T_0 = r_1\ldots r_{N-1}\,\pi,\quad
	T_i = r_{i+1}\ldots r_{N-1}\,\pi\,r_1\ldots r_i\quad (i=1,\ldots,N-2),\quad
	T_{N-1} = \pi\,r_1\ldots r_{N-1}.
\end{equation}
They are called translations and act on the simple roots as
\[
	T_j(\alpha_i) = q^{\delta_{i,j}-\delta_{i,j+1}}\alpha_i\quad (i,j\in\mathbb{Z}_N),
\]
where the symbol $\delta_{i,j}$ stands for the Kronecker's delta.
Note that
\[
	T_0\ldots T_{N-1} = 1.
\]
Then the translations generate an abelian normal subgroup of $W(A_{N-1}^{(1)})\rtimes\langle\pi\rangle$ with fundamental relations
\[
	r_i\,T_j = T_{(i-1,i)(j)}\,r_i,\quad
	\pi\,T_i = T_{i-1}\,\pi\quad (i,j\in\mathbb{Z}_N),
\]
where the symbol $(i-1,i)(j)$ stands for the action of the permutation $(i-1,i)$ on the number $j$.
Note that the extended affine Weyl group is decomposed into semi-direct product of the group of the translations and a finite Weyl group as
\[
	W(A_{N-1}^{(1)})\rtimes\langle\pi\rangle = \langle T_0,\ldots,T_{N-1}\rangle\rtimes\langle r_1,\ldots,r_{N-1}\rangle.
\]

\section{Birational representation of affine Weyl group of type $(A_{2n+1}+A_1+A_1)^{(1)}$}\label{Sec:Bir_Rep}

Let $Q$ be a quiver given in Figure \ref{Fig:Gen_q-PVI} with $n\geq1$.
Also let $(Q,(x_1,\ldots,x_{4n+4}),(y_1,\ldots,y_{4n+4}))$ be a seed.
Then we can describe the actions of the mutations $\mu_1,\ldots,\mu_{4n+4}$ on the coefficients $y_1,\ldots,y_{4n+4}$ following the definition given in Section \ref{Sec:Intro}.
We don't give their explicit formulas here.
Moreover, we consider permutations of vertices of the quiver $(i,j)$ acting on the coefficients as
\[
	(y_1,\ldots,y_{4n+4}) \xrightarrow{(i,j)} (y_1,\ldots,y_{i-1},y_j,y_{i+1},\ldots,y_{j-1},y_i,y_{j+1},\ldots,y_{4n+4}),
\]
for $i,j=1,\ldots,4n+4$ with $i<j$.
Note that we don't consider the cluster variables.
In this section we give some compositions of iterative mutations and permutations of vertices such that the quiver $Q$ is invariant under their actions.
These compositions turn out to be simple reflections or Dynkin diagram automorphisms and generate a group of birational transformations which is isomorphic to an extended affine Weyl group of type $(A_{2n+1}+A_1+A_1)^{(1)}$.

Let $\alpha_i$ $(i\in\mathbb{Z}_{2n+2})$, $\beta_k$ $(k\in\mathbb{Z}_2)$ and $\beta'_k$ $(k\in\mathbb{Z}_2)$ be parameters corresponding to the multiplicative simple roots for $W(A^{(1)}_{2n+1})$, $W(A^{(1)}_1)$ and $W(A^{(1)}_1)$ respectively defined by
\begin{align*}
	&\alpha_{2i} = y_{2i+1}\,y_{2i+2},\quad
	\alpha_{2i+1} = y_{2i+2n+3}\,y_{2i+2n+4}\quad (i=0,\ldots,n), \\
	&\beta_0 = \prod_{i=0}^{n}y_{2i+1}\,y_{2i+2n+4},\quad
	\beta_1 = \prod_{i=0}^{n}y_{2i+2}\,y_{2i+2n+3},\quad
	\beta'_0 = \prod_{i=0}^{n}y_{2i+1}\,y_{2i+2n+3},\quad
	\beta'_1 = \prod_{i=0}^{n}y_{2i+2}\,y_{2i+2n+4}.
\end{align*}
Also let $\varphi_i$ $(i\in\mathbb{Z}_{2n+2})$ be dependent variables defined by
\[	\varphi_{2i} = y_{2i+1},\quad
	\varphi_{2i+1} = y_{2i+2n+3}\quad (i=0,\ldots,n).
\]
Since a product of all coefficients $\prod_{i=1}^{4n+4}y_i$ is invariant under the action of any mutation or permutation of vertices, we denote it by $q$.
Then we obtain
\[
	\prod_{i=0}^{2n+1}\alpha_i = \beta_0\,\beta_1 = \beta'_0\,\beta'_1 = q,\quad
	\beta_0 = \prod_{i=0}^{n}\frac{\varphi_{2i}\,\alpha_{2i+1}}{\varphi_{2i+1}},\quad
	\beta'_0 = \prod_{i=0}^{n}\varphi_{2i}\,\varphi_{2i+1}.
\]
In the following, we denote the Cartan matrix of type $W(A^{(1)}_{2n+1})$ and $W(A^{(1)}_1)$ by $\left(a_{i,j}\right)_{i,j}$ and $\left(b_{k,l}\right)_{k,l}$ respectively.

Simple reflections $r_i$ $(i\in\mathbb{Z}_{2n+2})$ corresponding to the parameters $\alpha_i$ are defined by
\[
	r_{2i} = \mu_{2i+1}\,(2i+1,2i+2)\,\mu_{2i+1},\quad
	r_{2i+1} = \mu_{2i+2n+3}\,(2i+2n+3,2i+2n+4)\,\mu_{2i+2n+3},
\]
for $i=0,\ldots,n$.
We can show easily that the quiver $Q$ is invariant under their actions.
They act on the parameters as
\[
	r_j(\alpha_i) = \alpha_i\,\alpha_j^{-a_{j,i}},\quad
	r_j(\beta_k) = \beta_k,\quad
	r_j(\beta'_k) = \beta'_k\quad (i,j\in\mathbb{Z}_{2n+2},\ k\in\mathbb{Z}_2),
\]
and the dependent variables as
\[
	r_j(\varphi_i) = \varphi_i\,\alpha^{-\delta_{i,j}+\delta_{i,j+1}}\left(\frac{\alpha_j+\varphi_j}{1+\varphi_j}\right)^{\delta_{i,j-1}-\delta_{i,j+1}}\quad (i,j\in\mathbb{Z}_{2n+2}).
\]
These formulas are derived by direct calculations.
For example, the simple reflection $r_0$ acts on the coefficients as
\begin{align*}
	&(y_1,\ldots,y_{4n+4}) \\
	&\xrightarrow{\mu_1} \left(\frac{1}{y_1},y_2,y_3,\ldots,y_{2n+2},(1+y_1)\,y_{2n+3},\frac{y_{2n+4}}{1+\frac{1}{y_1}},y_{2n+5},\ldots,y_{4n+2},\frac{y_{4n+3}}{1+\frac{1}{y_1}},(1+y_1)\,y_{4n+4}\right) \\
	&\xrightarrow{(1,2)} \left(y_2,\frac{1}{y_1},y_3,\ldots,y_{2n+2},(1+y_1)\,y_{2n+3},\frac{y_{2n+4}}{1+\frac{1}{y_1}},y_{2n+5},\ldots,y_{4n+2},\frac{y_{4n+3}}{1+\frac{1}{y_1}},(1+y_1)\,y_{4n+4}\right) \\
	&\xrightarrow{\mu_1} \left(\frac{1}{y_2},\frac{1}{y_1},y_3,\ldots,y_{2n+2},\frac{1+y_1}{1+\frac{1}{y_2}}y_{2n+3},\frac{1+y_2}{1+\frac{1}{y_1}}y_{2n+4},y_{2n+5},\ldots,y_{4n+2},\frac{1+y_2}{1+\frac{1}{y_1}}y_{4n+3},\frac{1+y_1}{1+\frac{1}{y_2}}y_{4n+4}\right),
\end{align*}
from which we obtain
\begin{align*}
	&(\alpha_0,\ldots,\alpha_{2n+1},\beta_0,\beta_1,\beta'_0,\beta'_1,\varphi_0,\ldots,\varphi_{2n+1}) \\
	&\xrightarrow{r_0} \left(\frac{1}{\alpha_0},\alpha_0\,\alpha_1,\alpha_2,\ldots,\alpha_{2n},\alpha_0\,\alpha_{2n+1},\beta_0,\beta_1,\beta'_0,\beta'_1,\frac{\varphi_0}{\alpha_0},\varphi_1\frac{\alpha_0\,(1+\varphi_0)}{\alpha_0+\varphi_0},\varphi_2,\ldots,\varphi_{2n},\varphi_{2n+1}\frac{\alpha_0+\varphi_0}{1+\varphi_0}\right).
\end{align*}
Simple reflections $s_k,s'_k$ $(k\in\mathbb{Z}_2)$ corresponding to the parameters $\beta_k,\beta'_k$ are defined by
\begin{align*}
	s_0 &= \mu_1\,\mu_{2n+4}\,\mu_3\,\mu_{2n+6}\ldots\mu_{2n-1}\,\mu_{4n+2}\,\mu_{2n+1}\,(2n+1,4n+4)\,\mu_{2n+1}\,\mu_{4n+2}\,\mu_{2n-1}\ldots\mu_{2n+6}\,\mu_3\,\mu_{2n+4}\,\mu_1, \\
	s_1 &= \mu_2\,\mu_{2n+3}\,\mu_4\,\mu_{2n+5}\ldots\mu_{2n}\,\mu_{4n+1}\,\mu_{2n+2}\,(2n+2,4n+3)\,\mu_{2n+2}\,\mu_{4n+1}\,\mu_{2n}\ldots\mu_{2n+5}\,\mu_4\,\mu_{2n+3}\,\mu_2, \\
	s'_0 &= \mu_1\,\mu_{2n+3}\,\mu_3\,\mu_{2n+5}\ldots\mu_{2n-1}\,\mu_{4n+1}\,\mu_{2n+1}\,(2n+1,4n+3)\,\mu_{2n+1}\,\mu_{4n+1}\,\mu_{2n-1}\ldots\mu_{2n+5}\,\mu_3\,\mu_{2n+3}\,\mu_1, \\
	s'_1 &= \mu_2\,\mu_{2n+4}\,\mu_4\,\mu_{2n+6}\ldots\mu_{2n}\,\mu_{4n+2}\,\mu_{2n+2}\,(2n+2,4n+4)\,\mu_{2n+2}\,\mu_{4n+2}\,\mu_{2n}\ldots\mu_{2n+6}\,\mu_4\,\mu_{2n+4}\,\mu_2.
\end{align*}
We show that the quiver $Q$ is invariant under their actions in Appendix \ref{App:Birat_Transf}.
They act on the parameters as
\begin{equation}\label{Eq:s0s1_Transf_1}
	s_l(\alpha_i) = \alpha_i,\quad
	s_l(\beta_k) = \beta_k\,\beta_l^{-b_{l,k}},\quad
	s_l(\beta'_k) = \beta'_k\quad (i\in\mathbb{Z}_{2n+2},\ k,l\in\mathbb{Z}_2),
\end{equation}
and
\[
	s'_l(\alpha_i) = \alpha_i,\quad
	s'_l(\beta_k) = \beta_k,\quad
	s'_l(\beta'_k) = \beta'_k\,(\beta'_l)^{-b_{l,k}}\quad (i\in\mathbb{Z}_{2n+2},\ k,l\in\mathbb{Z}_2).
\]
Their actions on the dependent variables are given by
\begin{equation}\begin{split}\label{Eq:s0s1_Transf_2}
	s_0(\varphi_{2i}) &= \frac{\varphi_{2i+1}}{\alpha_{2i+1}}\frac{\sum_{j=0}^{n}\left(\prod_{k=0}^{j-1}\varphi_{2i+2k}\,\frac{\alpha_{2i+2k+1}}{\varphi_{2i+2k+1}}\right)\left(1+\varphi_{2i+2j}\right)}{\sum_{j=0}^{n}\left(\prod_{k=0}^{j-1}\varphi_{2i+2k+2}\,\frac{\alpha_{2i+2k+3}}{\varphi_{2i+2k+3}}\right)\left(1+\varphi_{2i+2j+2}\right)}, \\
	s_0(\varphi_{2i+1}) &= \alpha_{2i+1}\,\varphi_{2i+2}\,\frac{\sum_{j=0}^{n}\left(\prod_{k=0}^{j-1}\frac{\alpha_{2i+2k+3}}{\varphi_{2i+2k+3}}\,\varphi_{2i+2k+4}\right)\left(1+\frac{\alpha_{2i+2j+3}}{\varphi_{2i+2j+3}}\right)}{\sum_{j=0}^{n}\left(\prod_{k=0}^{j-1}\frac{\alpha_{2i+2k+1}}{\varphi_{2i+2k+1}}\,\varphi_{2i+2k+2}\right)\left(1+\frac{\alpha_{2i+2j+1}}{\varphi_{2i+2j+1}}\right)}, \\
	s_1(\varphi_{2i}) &= \alpha_{2i}\,\varphi_{2i+1}\,\frac{\sum_{j=0}^{n}\left(\prod_{k=0}^{j-1}\frac{\alpha_{2i+2k+2}}{\varphi_{2i+2k+2}}\,\varphi_{2i+2k+3}\right)\left(1+\frac{\alpha_{2i+2j+2}}{\varphi_{2i+2j+2}}\right)}{\sum_{j=0}^{n}\left(\prod_{k=0}^{j-1}\frac{\alpha_{2i+2k}}{\varphi_{2i+2k}}\,\varphi_{2i+2k+1}\right)\left(1+\frac{\alpha_{2i+2j}}{\varphi_{2i+2j}}\right)}, \\
	s_1(\varphi_{2i+1}) &= \frac{\varphi_{2i+2}}{\alpha_{2i+2}}\frac{\sum_{j=0}^{n}\left(\prod_{k=0}^{j-1}\varphi_{2i+2k+1}\,\frac{\alpha_{2i+2k+2}}{\varphi_{2i+2k+2}}\right)\left(1+\varphi_{2i+2j+1}\right)}{\sum_{j=0}^{n}\left(\prod_{k=0}^{j-1}\varphi_{2i+2k+3}\,\frac{\alpha_{2i+2k+4}}{\varphi_{2i+2k+4}}\right)\left(1+\varphi_{2i+2j+3}\right)},
\end{split}\end{equation}
for $i=0,\ldots,n$ and
\[
	s'_0(\varphi_i) = \frac{1}{\varphi_{i+1}}\frac{\sum_{j=0}^{2n+1}\prod_{k=0}^{j-1}\frac{1}{\varphi_{i+k+2}}}{\sum_{j=0}^{2n+1}\prod_{k=0}^{j-1}\frac{1}{\varphi_{i+k}}},\quad
	s'_1(\varphi_i) = \frac{\alpha_i}{\frac{\varphi_{i+1}}{\alpha_{i+1}}}\frac{\sum_{j=0}^{2n+1}\prod_{k=0}^{j-1}\frac{\varphi_{i+k}}{\alpha_{i+k}}}{\sum_{j=0}^{2n+1}\prod_{k=0}^{j-1}\frac{\varphi_{i+k+2}}{\alpha_{i+k+2}}}\quad (i\in\mathbb{Z}_{2n+2}).
\]
We derive these formulas in Appendix \ref{App:Birat_Transf}.

\begin{thm}\label{Thm:Fund_Rel}
If we set
\[
	G = \langle r_0,\ldots,r_{2n+1}\rangle,\quad
	H = \langle s_0,s_1\rangle,\quad
	H' = \langle s'_0,s'_1\rangle,
\]
then the groups $G$, $H$ and $H'$ are isomorphic to the affine Weyl groups $W(A^{(1)}_{2n+1})$, $W(A^{(1)}_1)$ and $W(A^{(1)}_1)$ respectively.
Moreover, any two groups are mutually commutative, that is
\[
	GH = HG,\quad
	GH' = H'G,\quad
	HH' = H'H.
\]
\end{thm}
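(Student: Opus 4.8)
\emph{Proof strategy.} The plan is to reduce the statement to a finite list of identities between rational functions in the variables $\alpha_0,\dots,\alpha_{2n+1}$ and $\varphi_0,\dots,\varphi_{2n+1}$, supplemented by a faithfulness argument. The point is that the coefficients are recovered from these by $y_{2i+1}=\varphi_{2i}$, $y_{2i+2}=\alpha_{2i}\varphi_{2i}^{-1}$, $y_{2i+2n+3}=\varphi_{2i+1}$, $y_{2i+2n+4}=\alpha_{2i+1}\varphi_{2i+1}^{-1}$ for $i=0,\dots,n$, so that $\mathbb{Q}(y_1,\dots,y_{4n+4})=\mathbb{Q}(\alpha_0,\dots,\alpha_{2n+1},\varphi_0,\dots,\varphi_{2n+1})$, and each of $r_i$, $s_k$, $s'_k$ is a birational automorphism of this field, being a composition of (birational) $y$-mutations and permutations. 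Since a field automorphism is determined by its values on a generating set, every relation among these transformations is equivalent to its restriction to the $\alpha_i$ and the $\varphi_i$ — and all of those restrictions are displayed above, with $\beta_k$ and $\beta'_k$ read as the corresponding monomials in the $\varphi$'s and $\alpha$'s. The quiver $Q$ is invariant under all of $r_i,s_k,s'_k$ (Appendix \ref{App:Birat_Transf}), so these compositions are well defined.

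First I would handle the three factors separately. For $G$: on the $\alpha$-variables the action $r_j(\alpha_i)=\alpha_i\alpha_j^{-a_{j,i}}$ is exactly the root-lattice realization of $W(A^{(1)}_{2n+1})$ recalled in Section \ref{Sec:Aff_Wey}, which already satisfies $r_i^2=1$, the braid relations $r_ir_jr_i=r_jr_ir_j$ for $a_{i,j}=-1$, and $r_ir_j=r_jr_i$ for $a_{i,j}=0$; it then remains to verify these same relations on the $\varphi$-variables through the displayed formula for $r_j(\varphi_i)$. When $a_{i,j}=0$ with the supports $\{i-1,i,i+1\}$ and $\{j-1,j,j+1\}$ disjoint the commutation is immediate, and the remaining commutations and the relations $r_i^2=1$, $r_ir_jr_i=r_jr_ir_j$ reduce to short identities among the factors $(\alpha_j+\varphi_j)/(1+\varphi_j)$. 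This produces a surjective homomorphism $W(A^{(1)}_{2n+1})\to G$; it is injective because post-composing with the action of $G$ on the $G$-stable sublattice generated by $\alpha_0,\dots,\alpha_{2n+1}$ returns precisely the realization of Section \ref{Sec:Aff_Wey}, which is faithful — concretely, the translations $T_j$ act by $\alpha_i\mapsto q^{\delta_{i,j}-\delta_{i,j+1}}\alpha_i$ and $q=\prod_i y_i$ is transcendental in our field, so no nontrivial element acts trivially. Hence $G\cong W(A^{(1)}_{2n+1})$.

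For $H$ and $H'$, the only Coxeter relations are $s_0^2=s_1^2=1$ and $(s'_0)^2=(s'_1)^2=1$, since $W(A^{(1)}_1)$ is the infinite dihedral group. On the $\alpha$-variables $s_l$ and $s'_l$ act trivially; on $\beta_k$ (resp.\ $\beta'_k$) they act as the standard involution $\beta_k\mapsto\beta_k\beta_l^{-b_{l,k}}$; and on the $\varphi$-variables the relations $s_l^2=1$ and $(s'_l)^2=1$ follow by substituting the summation formulas into themselves, the mechanism being a telescoping of the sums $\sum_{j=0}^{n}\prod_{k=0}^{j-1}(\cdots)$ and $\sum_{j=0}^{2n+1}\prod_{k=0}^{j-1}(\cdots)$ (together with cyclic identities such as $\prod_i\varphi_i=\beta'_0$). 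The resulting surjections from $W(A^{(1)}_1)$ onto $H$ and $H'$ are isomorphisms because $(s_0s_1)^m(\beta_0)=q^{2m}\beta_0$ (using $b_{0,1}=-2$ and $\beta_0\beta_1=q$) shows $s_0s_1$ has infinite order, and likewise $(s'_0s'_1)^m(\beta'_0)=q^{2m}\beta'_0$.

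Finally, for the commutativity it is enough to show that every $r_i$ commutes with every $s_k$ and with every $s'_k$, and that every $s_k$ commutes with every $s'_l$: the three groups then commute elementwise and $GH=HG$, $GH'=H'G$, $HH'=H'H$ follow. On the $\alpha$-variables each such commutator is trivial because $s_k$ and $s'_l$ fix every $\alpha_i$ while $r_i$ acts only through the $\alpha$'s; likewise each commutator fixes every $\beta_k$ and $\beta'_k$. The content is on the $\varphi$-variables, where one substitutes the displayed rational expressions and checks that $[r_i,s_k]$, $[r_i,s'_k]$ and $[s_k,s'_l]$ act as the identity on each $\varphi_j$. I expect this last part — together with $s_k^2=1$ and $(s'_k)^2=1$ — to be the main obstacle: these are sizeable rational-function identities, and the commutators involving the two different ``summation'' transformations $s_k$ and $s'_l$ are the most delicate, their verification resting on the telescoping/partial-fraction structure of the sums above. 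For that reason such computations are best carried out in the appendix, whereas the braid relations for the $r_i$ on the $\varphi$'s are the familiar type-$A$ computation and cause no trouble.
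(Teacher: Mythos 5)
Your overall strategy is sound and the heart of it coincides with the paper's proof: the only relation with real content is the commutativity $s_k s'_l = s'_l s_k$, and it is indeed established (in Appendix \ref{App:Prf_Fund_Rel}) by exactly the telescoping identities for the sums $S_i$, $S'_i$ that you anticipate — the paper isolates four such identities, \eqref{Eq:Action_s'1s1_Lem_1}--\eqref{Eq:Action_s'1s1_Lem_4}, and then computes $s'_1s_1(\varphi_i)=s_1s'_1(\varphi_i)$ directly. There are, however, three places where the paper takes a shorter route than you propose. First, you plan to verify $s_k^2=1$ and $(s'_k)^2=1$ by substituting the summation formulas into themselves; this is unnecessary, because each $s_k$, $s'_k$ is by definition a palindromic word in mutations wrapped around a single transposition, so relation \eqref{Eq:Fund_Rel_Quiver} ($\mu_i^2=1$ and $(j,k)\,\mu_i=\mu_{(j,k)(i)}\,(j,k)$) collapses $s_k^2$ to the identity formally — the same device handles the commutations \eqref{Eq:Fund_Rel_5} of $r_i$ with $s_k,s'_k$. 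Second, the Coxeter relations among the $r_i$ are not re-derived but cited from \cite{KNY1,KNY2}. Third, the paper checks only the single commutator $s_1s'_1=s'_1s_1$ and deduces the remaining three from $s_0=\pi^{-1}s_1\,\pi$ and $s'_0=(\pi')^{-1}s'_1\,\pi'$, a reduction you could profitably adopt to cut your ``main obstacle'' to a quarter of its size. On the other side of the ledger, your explicit injectivity argument — faithfulness of the root-lattice realization for $G$, and $(s_0s_1)^m(\beta_0)=q^{2m}\beta_0$ showing $s_0s_1$ has infinite order so that $H$ is genuinely infinite dihedral — is a point the paper passes over in silence, and it is a legitimate addition: verifying the defining relations alone only yields surjections from the abstract Weyl groups.
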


We prove this theorem in Appendix \ref{App:Prf_Fund_Rel}.

\begin{rem}
The definition of the simple reflections $r_i$ is suggested by \cite{BGM}, in which birational representations of extended affine Weyl groups providing the $q$-Painlev\'e equations are derived from mutation-periodic quivers.
Besides, the simple reflections $s_k,s'_k$ appeared in \cite{ILP2}.
\end{rem}

\begin{rem}
As is seen in Remark \ref{Rem:MOT}, Theorem \ref{Thm:Fund_Rel} is obtained independently in \cite{MOT}.
The proof in \cite{MOT} is given with the aid of a mutation combinatorics and hence can be applied to many other mutation-periodic quivers.
Nevertheless, we prove the theorem in Appendix \ref{App:Prf_Fund_Rel} by a direct calculation on purpose.
The reason is that some equations given throughout our proof are used in Appendix \ref{App:Prf_q-FST} to derive $q$-$P_{(n+1,n+1)}$.
Besides, the first half of the theorem is also shown in \cite{IIO,ILP2}.
\end{rem}

\begin{rem}
Only in the case $n=1$, the quiver $Q$ is invariant under the actions of permutations $(1,4)$, $(2,3)$, $(5,8)$ and $(6,7)$.
Then the group $\langle r_0,r_1,(1,4),(2,3),(5,8),(6,7)\rangle$ is isomorphic to the affine Weyl group of type $D^{(1)}_5$.
For its detail, see \cite{BGM}.
\end{rem}

In the last, we define Dynkin diagram automorphisms $\pi,\pi'$ by
\begin{align*}
	\pi &= (1,2n+3,3,2n+5,\ldots,2n+1,4n+3)(2,2n+4,4,2n+6,\ldots,2n+2,4n+4), \\
	\pi' &= (1,2n+4,3,2n+6,\ldots,2n+1,4n+4)(2,2n+3,4,2n+5,\ldots,2n+2,4n+3),
\end{align*}
and $\rho$ by
\begin{align*}
	\rho &= (1,2)(3,2n+2)(4,2n+1)\ldots(n+2,n+3) \\
	&\quad \times (2n+3,4n+3)(2n+5,4n+1)\ldots(3n+2,3n+4) \\
	&\quad \times (2n+4,4n+4)(2n+6,4n+2)\ldots(3n+3,3n+5),
\end{align*}
for $n$ is odd and
\begin{align*}
	\rho &= (1,2)(3,2n+2)(4,2n+1)\ldots(n+2,n+3) \\
	&\quad \times (2n+3,4n+3)(2n+5,4n+1)\ldots(3n+1,3n+5) \\
	&\quad \times (2n+4,4n+4)(2n+6,4n+2)\ldots(3n+2,3n+6),
\end{align*}
for $n$ is even.
We can show easily that the quiver $Q$ is invariant under their actions.
They act on the parameters as
\begin{align*}
	&\pi(\alpha_i) = \alpha_{i+1}\quad (i\in\mathbb{Z}_{2n+2}),\quad
	\pi(\beta_k) = \beta_{k+1},\quad
	\pi(\beta'_k) = \beta'_k\quad (k\in\mathbb{Z}_2), \\
	&\pi'(\alpha_i) = \alpha_{i+1}\quad (i\in\mathbb{Z}_{2n+2}),\quad
	\pi'(\beta_k) = \beta_k,\quad
	\pi'(\beta'_k) = \beta'_{k+1}\quad (k\in\mathbb{Z}_2), \\
	&\rho(\alpha_i) = \alpha_{2n+2-i}\quad (i\in\mathbb{Z}_{2n+2}),\quad
	\rho(\beta_k) = \beta'_{k+1},\quad
	\rho(\beta'_k) = \beta_{k+1}\quad (k\in\mathbb{Z}_2),
\end{align*}
and the dependent variables as
\begin{align*}
	&\pi(\varphi_i) = \varphi_{i+1},\quad
	\pi'(\varphi_i) = \frac{\alpha_{i+1}}{\varphi_{i+1}}\quad (i\in\mathbb{Z}_{2n+2}), \\
	&\rho(\varphi_{2i}) = \frac{\alpha_{2n+2-2i}}{\varphi_{2n+2-2i}},\quad
	\rho(\varphi_{2i+1}) = \varphi_{2n+1-2i}\quad (i=0,\ldots,n).
\end{align*}

\begin{prop}
The simple reflections and the Dynkin diagram automorphisms satisfy fundamental relations
\begin{align*}
	&\pi^{2n+2} = 1,\quad
	\pi^2 = (\pi')^2,\quad
	\pi\,\pi' = \pi'\pi,\quad
	\rho^2 = 1,\quad
	\pi\,\rho = \rho\,(\pi')^{-1}, \\
	&r_i\,\pi = \pi\,r_{i+1},\quad
	s_k\,\pi = \pi\,s_{k+1},\quad
	s'_k\,\pi = \pi\,s'_k,\quad
	r_i\,\pi' = \pi'r_{i+1},\quad
	s_k\,\pi' = \pi's_k,\quad
	s'_k\,\pi' = \pi's'_{k+1}, \\
	&r_i\,\rho = \rho\,r_{2n+2-i},\quad
	s_k\,\rho = \rho\,s'_{k+1},
\end{align*}
for $i\in\mathbb{Z}_{2n+2}$ and $k\in\mathbb{Z}_2$.
\end{prop}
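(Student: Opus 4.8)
The plan is to verify each relation by checking that its two sides induce the same substitution on a convenient system of coordinates. First I would note that $(y_1,\dots,y_{4n+4})$ and the pair of $(2n+2)$-tuples $(\varphi_0,\dots,\varphi_{2n+1})$, $(\alpha_0,\dots,\alpha_{2n+1})$ are related by an invertible monomial change of variables: from the definitions, $y_{2i+1}=\varphi_{2i}$, $y_{2i+2}=\alpha_{2i}/\varphi_{2i}$, $y_{2i+2n+3}=\varphi_{2i+1}$, $y_{2i+2n+4}=\alpha_{2i+1}/\varphi_{2i+1}$ for $i=0,\dots,n$. Hence two words in the generators represent the same element of the group of birational transformations if and only if they have the same action on all $\alpha_i$ and all $\varphi_i$; the actions on $\beta_k,\beta'_k$ are then automatically equal, since those are monomials in the $\alpha_i,\varphi_i$. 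So each stated relation reduces to two checks, one on the parameters $\alpha_i$ and one on the dependent variables $\varphi_i$.

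The five relations involving only $\pi,\pi',\rho$ --- $\pi^{2n+2}=1$, $\pi^2=(\pi')^2$, $\pi\,\pi'=\pi'\pi$, $\rho^2=1$, $\pi\,\rho=\rho\,(\pi')^{-1}$ --- already hold at the level of permutations of $\{1,\dots,4n+4\}$: $\pi$ and $\pi'$ are each a product of two disjoint $(2n+2)$-cycles and $\rho$ is a product of disjoint transpositions, so these identities follow from a short direct computation with the explicit cycle decompositions (and are equally clear from the displayed actions, e.g. $\pi^{2n+2}(\varphi_i)=\varphi_{i+2n+2}=\varphi_i$ and $(\pi')^2(\varphi_i)=\pi'(\alpha_{i+1}/\varphi_{i+1})=\alpha_{i+2}/(\alpha_{i+2}/\varphi_{i+2})=\varphi_{i+2}=\pi^2(\varphi_i)$).

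For a relation $w\,\tau=\tau\,w'$ with $\tau\in\{\pi,\pi',\rho\}$ and $w,w'$ simple reflections, the check on the $\alpha_i$ is exactly a relation in the corresponding extended affine Weyl group acting on the root lattice: it follows from $r_j(\alpha_i)=\alpha_i\,\alpha_j^{-a_{j,i}}$, $s_l(\alpha_i)=s'_l(\alpha_i)=\alpha_i$ and the invariances $a_{i,j}=a_{i+1,j+1}=a_{2n+2-i,2n+2-j}$ (with the analogous invariances of $(b_{k,l})$). The check on the $\varphi_i$ for the relations containing some $r_i$ is a one-line substitution into $r_j(\varphi_i)=\varphi_i\,\alpha^{-\delta_{i,j}+\delta_{i,j+1}}\bigl((\alpha_j+\varphi_j)/(1+\varphi_j)\bigr)^{\delta_{i,j-1}-\delta_{i,j+1}}$ followed by the index shift (for $\pi,\pi'$) or the reversal $\varphi_{2i}\mapsto\alpha_{2n+2-2i}/\varphi_{2n+2-2i}$, $\varphi_{2i+1}\mapsto\varphi_{2n+1-2i}$ (for $\rho$), again using those Cartan-matrix invariances.

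The substantial work is the six relations $s_k\,\pi=\pi\,s_{k+1}$, $s'_k\,\pi=\pi\,s'_k$, $s_k\,\pi'=\pi'\,s_k$, $s'_k\,\pi'=\pi'\,s'_{k+1}$, $s_k\,\rho=\rho\,s'_{k+1}$ checked on the $\varphi_i$, since here the formulas carry the sums $\sum_{j=0}^{n}(\prod_{k=0}^{j-1}\cdots)(1+\cdots)$ and $\sum_{j=0}^{2n+1}\prod_{k=0}^{j-1}(\cdots)$. For $\tau=\pi$ and $\tau=\pi'$ this is bookkeeping: the shift $\varphi_i\mapsto\varphi_{i+1}$, respectively $\varphi_i\mapsto\alpha_{i+1}/\varphi_{i+1}$ together with $\alpha_i\mapsto\alpha_{i+1}$, carries the sum defining $s_k$ onto the sum defining $s_{k+1}$ and the sum defining $s'_k$ onto that of $s'_k$ or $s'_{k+1}$, summand by summand. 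The delicate case is $s_k\,\rho=\rho\,s'_{k+1}$, where $\rho$ reverses the $A_{2n+1}$ direction and interchanges the two $A_1$ factors with a shift: one must show that applying $\rho$ converts the ``double-step'' sum in the formula for $s_k$ into the ``single-step'' sum in the formula for $s'_{k+1}$, which is done by reversing the order of each product $\prod_{k=0}^{j-1}$ and rewriting each factor via $\rho(\varphi_{2i})=\alpha_{2n+2-2i}/\varphi_{2n+2-2i}$, $\rho(\varphi_{2i+1})=\varphi_{2n+1-2i}$, then matching numerator against denominator. I expect this last identity to be the main obstacle; a more conceptual alternative, used in \cite{MOT}, is to observe that each $\tau$ is a quiver-preserving relabeling of vertices, so that conjugation by $\tau$ sends the defining mutation word of each $r_i,s_k,s'_k$ to the defining word of the reflection on the other side of the relation, which is then identified by its action on the parameters.
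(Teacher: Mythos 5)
Your proposal is correct, but it takes a genuinely different route from the paper. The paper's entire argument is the single remark that the relations follow ``by direct calculations with'' $\mu_i^2=1$ and $(j,k)\,\mu_i=\mu_{(j,k)(i)}\,(j,k)$: since $\pi,\pi',\rho$ are quiver-preserving relabelings of the vertices, conjugating the defining word of each $r_i,s_k,s'_k$ (a product of mutations and one transposition of vertices) by such a relabeling turns it into the defining word of the generator appearing on the other side of the relation, so everything is checked at the level of words in mutations and permutations and the explicit formulas for the actions on the $\varphi_i$ are never touched. This is precisely the ``more conceptual alternative'' you relegate to your final sentence. Your primary route instead verifies each relation on the coordinates $(\alpha_i,\varphi_i)$; the reduction to these coordinates is legitimate (the passage from $(y_1,\dots,y_{4n+4})$ to $(\alpha_i,\varphi_i)$ is an invertible monomial substitution and every generator preserves the quiver, so a transformation in this group is determined by its action on the coefficients), and the checks you outline all go through, including the one you flag as delicate: after applying $\rho$ and reversing the products, expanding the factor $(1+\cdots)$ in each of the $n+1$ summands of the double-step sum defining $s_k$ yields exactly the $2n+2$ partial products of the single-step sum defining $s'_{k+1}$. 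What your route buys is independence from the mutation words --- only the displayed birational formulas are used --- at the price of the sum manipulations for the $s_k,s'_k$ relations; what the paper's route buys is brevity and uniformity (no distinction between the easy $r_i$ cases and the hard $s_k,s'_k$ cases), at the price of tracking the long mutation sequences defining $s_k,s'_k$ under the relabelings.
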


We can prove this proposition by direct calculations with
\begin{equation}\label{Eq:Fund_Rel_Quiver}
	\mu_i^2 = 1,\quad
	(j,k)\,\mu_i = \mu_{(j,k)(i)}\,(j,k)\quad (i,j,k=1,\ldots,4n+4).
\end{equation}
Hence the semi-direct product $\langle G,H,H'\rangle\rtimes\langle\pi,\pi',\rho\rangle$ can be regarded as an extended affine Weyl group of type $(A_{2n+1}+A_1+A_1)^{(1)}$.

\section{Generalized $q$-Painlev\'e VI systems}\label{Sec:Gen_q-PVI}

In Section \ref{Sec:Bir_Rep}, we have obtained the extended affine Weyl group $\langle G,H,H'\rangle\rtimes\langle\pi,\pi',\rho\rangle$.
In this section we give its abelian normal subgroup generated by translations, which provides a class of generalized $q$-Painlev\'e VI systems containing the known three systems.

Following definition of translations \eqref{Eq:Translation}, we define transformations $T_i,T'_i$ $(i\in\mathbb{Z}_{2n+2})$ and $U_k,U'_k$ $(k\in\mathbb{Z}_2)$ by
\begin{align*}
	&T_0 = r_1\ldots r_{2n+1}\,\pi,\quad
	T_i = r_{i+1}\ldots r_{2n+1}\,\pi\,r_1\ldots r_i\quad (i=1,\ldots,2n),\quad
	T_{2n+1} = \pi\,r_1\ldots r_{2n+1}, \\
	&T'_0 = r_1\ldots r_{2n+1}\,\pi',\quad
	T'_i = r_{i+1}\ldots r_{2n+1}\,\pi'r_1\ldots r_i\quad (i=1,\ldots,2n),\quad
	T'_{2n+1} = \pi'r_1\ldots r_{2n+1}, \\
	&U_0 = s_1\,\pi,\quad
	U_1 = \pi\,s_1,\quad
	U'_0 = s'_1\,\pi',\quad
	U'_1 = \pi's'_1.
\end{align*}
However all of them can't be regarded as translations.
As a matter of fact, each of $T_i$ and $T'_i$ acts on $\beta_k$ and $\beta'_k$ respectively as a permutation.
Also each of $U_k,U'_k$ acts on $\alpha_i$ as a cyclic permutation.
Therefore we consider compositions of those transformations as
\begin{align*}
	&\mathcal{T}_i = T_{i-1}^{-1}\,T_i = (T'_{i-1})^{-1}T'_i\quad (i\in\mathbb{Z}_{2n+2}),\quad
	\mathcal{U}_k = U_{k-1}^{-1}\,U_k,\quad
	\mathcal{U}'_k = (U'_{k-1})^{-1}U'_k\quad (k\in\mathbb{Z}_2), \\
	&\mathcal{V} = s_1\,T_0,\quad
	\mathcal{V}' = s'_1\,T'_0.
\end{align*}
Then they turn out to be translations acting on the parameters as
\begin{align*}
	&\mathcal{T}_j(\alpha_i) = q^{-\delta_{i,j-1}+2\delta_{i,j}-\delta_{i,j+1}}\alpha_i,\quad
	\mathcal{T}_j(\beta_k) = \beta_k,\quad
	\mathcal{T}_j(\beta'_k) = \beta'_k, \\
	&\mathcal{U}_l(\alpha_i) = \alpha_i,\quad
	\mathcal{U}_l(\beta_k) = q^{2\delta_{k,l}-2\delta_{k,l+1}}\beta_k,\quad
	\mathcal{U}_l(\beta'_k) = \beta'_k, \\
	&\mathcal{U}'_l(\alpha_i) = \alpha_i,\quad
	\mathcal{U}'_l(\beta_k) = \beta_k,\quad
	\mathcal{U}'_l(\beta'_k) = q^{2\delta_{k,l}-2\delta_{k,l+1}}\beta'_k, \\
	&\mathcal{V}(\alpha_i) = q^{\delta_{i,0}-\delta_{i,1}}\alpha_i,\quad
	\mathcal{V}(\beta_k) = q^{\delta_{k,0}-\delta_{k,1}}\beta_k,\quad
	\mathcal{V}(\beta'_k) = \beta'_k, \\
	&\mathcal{V}'(\alpha_i) = q^{\delta_{i,0}-\delta_{i,1}}\alpha_i,\quad
	\mathcal{V}'(\beta_k) = \beta_k,\quad
	\mathcal{V}'(\beta'_k) = q^{\delta_{k,0}-\delta_{k,1}}\beta'_k,
\end{align*}
for $i,j\in\mathbb{Z}_{2n+2}$ and $k,l\in\mathbb{Z}_2$.
Those translations generate an abelian normal subgroup of $\langle G,H,H'\rangle\rtimes\langle\pi,\pi',\rho\rangle$ with fundamental relations
\begin{align*}
	&r_i\,\mathcal{T}_j = \mathcal{T}_j\,\mathcal{T}_i^{\delta_{i,j-1}-2\delta_{i,j}+\delta_{i,j+1}}r_i,\quad
	s_k\,\mathcal{T}_j = \mathcal{T}_j\,s_k,\quad
	s'_k\,\mathcal{T}_j = \mathcal{T}_j\,s'_k, \\
	&\pi\,\mathcal{T}_j = \mathcal{T}_{j-1}\,\pi,\quad
	\pi'\,\mathcal{T}_j = \mathcal{T}_{j-1}\,\pi',\quad
	\rho\,\mathcal{T}_j = \mathcal{T}_{2n+2-j}\,\rho, \\
	&r_i\,\mathcal{U}_l = \mathcal{U}_l\,r_i,\quad
	s_k\,\mathcal{U}_l = \mathcal{U}_{l-1}\,s_k,\quad
	s'_k\,\mathcal{U}_l = \mathcal{U}_l\,s'_k, \\
	&\pi\,\mathcal{U}_l = \mathcal{U}_{l-1}\,\pi,\quad
	\pi'\,\mathcal{U}_l = \mathcal{U}_l\,\pi',\quad
	\rho\,\mathcal{U}_l = \mathcal{U}'_{l-1}\,\rho, \\
	&r_i\,\mathcal{U}'_l = \mathcal{U}'_l\,r_i,\quad
	s_k\,\mathcal{U}'_l = \mathcal{U}'_l\,s_k,\quad
	s'_k\,\mathcal{U}'_l = \mathcal{U}'_{l-1}\,s'_k, \\
	&\pi\,\mathcal{U}'_l = \mathcal{U}'_l\,\pi,\quad
	\pi'\,\mathcal{U}'_l = \mathcal{U}'_{l-1}\,\pi',\quad
	\rho\,\mathcal{U}'_l = \mathcal{U}_{l-1}\,\rho, \\
	&r_i\,\mathcal{V} = \mathcal{V}\,\mathcal{T}_i^{-\delta_{i,0}+\delta_{i,1}}r_i,\quad
	s_k\,\mathcal{V} = \mathcal{V}\,\mathcal{U}_1\,s_k,\quad
	s'_k\,\mathcal{V} = \mathcal{V}\,s'_k, \\
	&\pi\,\mathcal{V} = \mathcal{V}\,\mathcal{T}_0^{-1}\,\mathcal{U}_1\,\pi,\quad
	\pi'\,\mathcal{V} = \mathcal{V}\,\pi',\quad
	\rho\,\mathcal{V} = (\mathcal{V}')^{-1}\,\mathcal{T}_0\,\rho, \\
	&r_i\,\mathcal{V}' = \mathcal{V}'\,\mathcal{T}_i^{-\delta_{i,0}+\delta_{i,1}}r_i,\quad
	s_k\,\mathcal{V}' = \mathcal{V}'s_k,\quad
	s'_k\,\mathcal{V}' = \mathcal{V}'\,\mathcal{U}'_1\,s'_k, \\
	&\pi\,\mathcal{V}' = \mathcal{V}'\pi,\quad
	\pi'\,\mathcal{V}' = \mathcal{V}'\,\mathcal{T}_0^{-1}\,\mathcal{U}'_1\,\pi',\quad
	\rho\,\mathcal{V}' = \mathcal{V}^{-1}\,\mathcal{T}_0\,\rho,
\end{align*}
for $i,j\in\mathbb{Z}_{2n+2}$ and $k,l\in\mathbb{Z}_2$.
Note that the group of the translations is actually generated by $2n+3$ elements $\mathcal{T}_1,\ldots,\mathcal{T}_{2n},\mathcal{U}_1,\mathcal{V},\mathcal{V}'$.
Also note that the extended affine Weyl group excepting $\rho$ is decomposed into semi-direct product of the group of the translations and a reducible finite Weyl group as
\[
	\langle G,H,H'\rangle\rtimes\langle\pi,\pi'\rangle = \langle\mathcal{T}_1,\ldots,\mathcal{T}_{2n},\mathcal{U}_1,\mathcal{V},\mathcal{V}'\rangle\rtimes\langle r_1,\ldots,r_{2n+1},s_1,s'_1\rangle.
\]

In the following, we focus on a translation which has a factor $\pi^{-1}\pi'$ or $\pi^2=(\pi')^2$.
Specifically, we consider two types of translations
\[
	\tau_1 = \mathcal{U}_1^{-1}\,\mathcal{V}^{-1}\,\mathcal{V}' = U_1^{-1}U'_0 = s_1\,s'_1\,\pi^{-1}\pi',
\]
and
\begin{align*}
	\tau_2 &= \mathcal{T}_1^{-2n+1}\,\mathcal{T}_2^{-2n+2}\ldots\mathcal{T}_n^{-n}\,\mathcal{T}_{n+1}^{-n}\,\mathcal{T}_{n+2}^{-n+1}\ldots\mathcal{T}_{2n}^{-1}\,\mathcal{U}_1^{-n}\,\mathcal{V}^{-2n} \\
	&= T_n\,T_{2n+1} \\
	&= r_{n+1}\ldots r_{2n+1}\,r_0\ldots r_{n-1}\,r_{2n+1}\,r_0\ldots r_{2n-1}\,\pi^2.
\end{align*}
They act on the parameters as
\begin{align*}
	&\tau_1(\alpha_i) = \alpha_i\quad (i\in\mathbb{Z}_{2n+2}),\quad
	\tau_1(\beta_k) = q^{\delta_{k,0}-\delta_{k,1}}\beta_k,\quad
	\tau_1(\beta'_k) = q^{\delta_{k,0}-\delta_{k,1}}\beta'_k\quad (k\in\mathbb{Z}_2), \\
	&\tau_2(\alpha_i) = q^{-\delta_{i,0}+\delta_{i,n}-\delta_{i,n+1}+\delta_{i,2n+1}}\alpha_i\quad (i\in\mathbb{Z}_{2n+2}),\quad
	\tau_2(\beta_k) = \beta_k,\quad
	\tau_2(\beta'_k) = \beta'_k\quad (k\in\mathbb{Z}_2).
\end{align*}
In addition, we consider a translation
\[
	\mathcal{T}_1^{-n}\,\mathcal{T}_2^{-n}\,\mathcal{T}_3^{-n+1}\,\mathcal{T}_4^{-n+1}\ldots\mathcal{T}_{2n-1}^{-1}\,\mathcal{T}_{2n}^{-1}\,\mathcal{U}_1^{-m}\,\mathcal{V}^{-n-1} = T_1\,T_3\ldots T_{2n+1},
\]
for $n=2m-1$ and
\[
	\mathcal{T}_1^{-2n}\,\mathcal{T}_2^{-2n}\,\mathcal{T}_3^{-2n+2}\,\mathcal{T}_4^{-2n+2}\ldots\mathcal{T}_{2n-1}^{-2}\,\mathcal{T}_{2n}^{-2}\,\mathcal{U}_1^{-n-1}\,\mathcal{V}^{-2n-2} = T_1^2\,T_3^2\ldots T_{2n+1}^2,
\]
for $n=2m$.
It can be factorized into a power as
\[
	T_1\,T_3\ldots T_{2n+1} = (r_0\,r_2\ldots r_{2n}\,\pi)^{n+1}.
\]
Then we set
\[
	\tau_3 = (r_0\,r_2\ldots r_{2n}\,\pi)^2.
\]
It acts on the parameters as
\begin{align*}
	&\tau_3(\alpha_{2i}) = \frac{1}{\alpha_{2i+1}\,\alpha_{2i+2}\,\alpha_{2i+3}},\quad
	\tau_3(\alpha_{2i+1}) = \alpha_{2i+1}\,\alpha_{2i+2}\,\alpha_{2i+3}\,\alpha_{2i+4}\,\alpha_{2i+5}\quad (i=0,\ldots,n), \\
	&\tau_3(\beta_k) = \beta_k,\quad
	\tau_3(\beta'_k) = \beta'_k\quad (k\in\mathbb{Z}_2).
\end{align*}
Although $\tau_3$ isn't a translation in general, it is equivalent to the translation $\tau_2$ only in the case $n=1$.

\begin{thm}\label{Thm:Gen_q-PVI}
The translations $\tau_1,\tau_2$ and the transformation $\tau_3$ imply three types of generalized $q$-Painlev\'e VI systems as follows.
\begin{itemize}
\item
$\tau_1$ implies the $q$-Painlev\'e system $q$-$P_{(n+1,n+1)}$ arising from the $q$-DS hierarchy given in \cite{Suz2,Suz3}; see Theorem \ref{Thm:q-FST}.
\item
$\tau_2$ implies Sakai's $q$-Garnier system given in \cite{Sak2}; see Theorem \ref{Thm:q-Garnier}.
\item
$\tau_3$ implies Tsuda's $q$-Painlev\'e system arising from the $q$-LUC hierarchy given in Section 3.4 of \cite{T3}; see Theorem \ref{Thm:q-Tsuda}.
\end{itemize}
\end{thm}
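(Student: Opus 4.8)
The plan is to prove the three assertions separately; they are logically independent, and each is established in the corresponding later section (Sections~\ref{Sec:q-FST}, \ref{Sec:q-Garnier} and \ref{Sec:q-Tsuda}) by an explicit reduction. For each of $\tau_1,\tau_2,\tau_3$ the common recipe is the same: compute the action of the transformation on the dependent variables $\varphi_0,\ldots,\varphi_{2n+1}$ (its action on the parameters $\alpha_i,\beta_k,\beta'_k$ having already been recorded in Section~\ref{Sec:Gen_q-PVI}), introduce the coordinates in which the target system is presented in the cited reference, and verify that the birational recurrence induced by the transformation coincides with the published equations. In fact $\tau_1,\tau_2,\tau_3$ were singled out precisely because such reductions exist, so what remains is the verification.

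For $\tau_1=s_1\,s'_1\,\pi^{-1}\pi'$ I would argue as follows. The Dynkin diagram automorphisms act elementarily, $\pi(\varphi_i)=\varphi_{i+1}$ and $\pi'(\varphi_i)=\alpha_{i+1}/\varphi_{i+1}$, so the factor $\pi^{-1}\pi'$ only relabels indices and performs the inversions $\varphi\mapsto\alpha/\varphi$; the substantive step is to compose this with the two closed formulas for $s_1$ in \eqref{Eq:s0s1_Transf_2} and for $s'_1$ stated immediately after them, each of which is a single ratio of sums $\sum_{j=0}^{n}(\cdots)$, resp.\ $\sum_{j=0}^{2n+1}(\cdots)$. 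Passing to the variables used for $q$-$P_{(n+1,n+1)}$ in \cite{Suz2,Suz3} and simplifying yields Theorem~\ref{Thm:q-FST}; this computation reuses identities produced in the proof of Theorem~\ref{Thm:Fund_Rel} and is carried out in Appendix~\ref{App:Prf_q-FST}.

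For $\tau_2=T_n\,T_{2n+1}=r_{n+1}\ldots r_{2n+1}\,r_0\ldots r_{n-1}\,r_{2n+1}\,r_0\ldots r_{2n-1}\,\pi^2$, the word involves roughly $4n$ simple reflections $r_i$. The point is that each $r_j$ changes only $\varphi_{j-1},\varphi_j,\varphi_{j+1}$, namely $r_j(\varphi_j)=\varphi_j/\alpha_j$, $r_j(\varphi_{j-1})=\varphi_{j-1}(\alpha_j+\varphi_j)/(1+\varphi_j)$ and $r_j(\varphi_{j+1})=\varphi_{j+1}\,\alpha_j(1+\varphi_j)/(\alpha_j+\varphi_j)$, and that the reflections in the word occur in runs of consecutive indices, so the successive substitutions telescope along the cyclic chain $\mathbb{Z}_{2n+2}$; one propagates the expressions for the $\varphi_i$ through the word step by step and matches the outcome with the dependent variables and $q$-difference equations of Sakai's $q$-Garnier system \cite{Sak2}, obtaining Theorem~\ref{Thm:q-Garnier}. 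For $\tau_3=(r_0\,r_2\ldots r_{2n}\,\pi)^2$, the reflections $r_0,r_2,\ldots,r_{2n}$ have pairwise non-adjacent indices in $\mathbb{Z}_{2n+2}$ (for $n\geq1$) and therefore commute, so the half-step $w=r_0 r_2\ldots r_{2n}\,\pi$ is a product of commuting involutions followed by $\pi$; computing $w(\varphi_i)$ and then $\tau_3(\varphi_i)=w^2(\varphi_i)$ is short, and the result is identified with the variables of the similarity reduction of the lattice $q$-UC hierarchy in Section~3.4 of \cite{T3}, giving Theorem~\ref{Thm:q-Tsuda}.

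I expect the main obstacle to be the computation for $\tau_2$. Unlike $\tau_1$ and $\tau_3$, whose building blocks are either elementary or pairwise commuting, $\tau_2$ is a long non-commuting word, so the bookkeeping of the telescoping substitutions must be organized with care, which is the reason the details are relegated to an appendix. A secondary difficulty, present in all three cases, is identifying the correct change of variables: the target systems are recorded in \cite{Suz2,Suz3}, \cite{Sak2} and \cite{T3} in mutually inequivalent notations, so the coincidence asserted in the theorem only becomes manifest after writing down the explicit substitution that turns $\tau_i$ into the published form.
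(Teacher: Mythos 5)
Your overall structure --- splitting the theorem into the three independent claims and delegating each to the corresponding later section --- is exactly what the paper does, and your treatments of $\tau_1$ and $\tau_3$ match the paper's: the $\tau_1$ computation is indeed carried out in Appendix \ref{App:Prf_q-FST} by composing the closed formulas for $s_1,s'_1$ with $\pi^{-1}\pi'$ and reusing the identities \eqref{Eq:Action_s'1s1_1} and \eqref{Eq:Action_s'1s1_3} obtained in the proof of Theorem \ref{Thm:Fund_Rel}, and the $\tau_3$ case is a short direct calculation of the action of $(r_0\,r_2\ldots r_{2n}\,\pi)^2$ on the $\varphi_i$.

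The gap is in your plan for $\tau_2$. You propose to push the $\varphi_i$ through the word $r_{n+1}\ldots r_{2n+1}\,r_0\ldots r_{n-1}\,r_{2n+1}\,r_0\ldots r_{2n-1}\,\pi^2$ by telescoping substitutions and then ``match the outcome with the dependent variables and $q$-difference equations of Sakai's $q$-Garnier system.'' The paper explicitly rejects this route: the published formulas of the $q$-Garnier system are too large and complicated to compare with directly, and (as remarked at the end of Section \ref{Sec:q-Garnier}) the change of variables between $(f_i,g_i)$ and the coordinates of \cite{Sak2} or \cite{NagY1,NagY2} is ``complicated and sometimes algebraic,'' so a coordinate-by-coordinate match of birational recurrences is not available. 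What the paper does instead is work at the level of Lax forms: using Theorem \ref{Thm:q-FST} it rewrites $\tau_2$ in terms of $\tilde r_i$ and $\tilde\pi^2$, assembles the corresponding gauge matrices into the $(2n+2)\times(2n+2)$ linear problem \eqref{Eq:q-Garnier_Lax_1}, reduces it by a gauge transformation and a $q$-Laplace transformation to the $2\times 2$ pair \eqref{Eq:q-Garnier_Lax_4}, and then identifies the result with (an inverse direction of) the $q$-Garnier system through the latter's \emph{defining} Lax-pair data --- the degree of $\mathcal A$ in $z$, the eigenvalue structure of $\mathcal A_0$ and $\mathcal A_{n+1}$, and the determinants of $\mathcal A$ and $\mathcal B$ --- the $q$-Garnier system being uniquely determined by these properties. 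This uniqueness-via-Lax-data argument is the idea your proposal is missing; without it (or some substitute), the final matching step for $\tau_2$ does not go through as you describe.
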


As will be seen later, the origins and formulations of those three systems are quite different.
Therefore we divide Theorem \ref{Thm:Gen_q-PVI} into three and prove them individually.
Once we prove Theorem \ref{Thm:Gen_q-PVI}, we can handle three different systems in a unified way from a viewpoint of extended affine Weyl groups.

\begin{rem}
The translation $\mathcal{V}$ (resp. $\mathcal{V}'$) has a factor $\pi$ (resp. $\pi'$).
We conjecture that it implies a variation of the $q$-Garnier system given in Section 3.2.4 of \cite{NagY2}.
Besides, the translations $\mathcal{T}_i$ $(i\in\mathbb{Z}_{2n+2})$ and $\mathcal{U}_k,\mathcal{U}'_k$ $(k\in\mathbb{Z}_2)$ are products of the simple reflections.
Their explicit formulas are larger and more complicated than those of $\tau_1,\tau_2$.

\end{rem}

\section{$\tau_1$: $q$-Painlev\'e system $q$-$P_{(n+1,n+1)}$ arising from $q$-DS hierarchy}\label{Sec:q-FST}

The Drinfeld-Sokorov hierarchies were proposed as extensions of the Korteweg-de Vries hierarchy for the affine Lie algebras in \cite{DS}.
It is known that the DS hierarchies imply several continuous Painlev\'e equations and generalizations via operations called similarity reductions.
A $q$-analogue of the DS hierarchy of type $A^{(1)}_{2n+1}$ was proposed for the purpose of formulating a new generalized $q$-Painlev\'e VI system in \cite{Suz2,Suz3}.
As a result, we obtained a system of $q$-difference equations named $q$-$P_{(n+1,n+1)}$.
In this section we show that $q$-$P_{(n+1,n+1)}$ is equivalent to the action of the translation $\tau_1$ on $(\alpha_i,\beta_k,\beta'_k,\varphi_i)$.
Accordingly the affine Weyl group symmetry for $q$-$P_{(n+1,n+1)}$ is also derived from the extended affine Weyl group $\langle G,H,H'\rangle\rtimes\langle\pi,\pi',\rho\rangle$.

Let $a_i,b_i$ $(i=1,\ldots,n+1)$ be parameters, $t$ an independent variable and $f_i,g_i$ $(i=1,\ldots,n)$ dependent variables.
For the sake of simplicity, we use notations
\[
	b_0 = q\,b_{n+1},\quad
	f_0 = t,\quad
	g_0 = \frac{1}{q^{\frac{n-2}{2}}t\,g_1\ldots g_n}.
\]
Then $q$-$P_{(n+1,n+1)}$ is decribed as
\begin{equation}\label{Eq:q-FST}
	f_i\,\overline{f}_i = q\,t\,\frac{F_i\,F_{i+1}\left(\frac{b_i}{\overline{g}_i}-1\right)\left(\overline{g}_i-a_{i+1}\right)}{F_{n+1}F_1\left(\frac{b_0}{\overline{g}_0}-1\right)\left(\overline{g}_0-a_1\right)},\quad
	g_i\,\overline{g}_i = \frac{F_{i+1}\,G_i}{F_i\,G_{i+1}}\quad (i=1,\ldots,n),
\end{equation}
where
\begin{align*}
	F_i &= \sum_{j=1}^{i-1}f_j + t\sum_{j=i}^{n}f_j + t, \\
	G_i &= \sum_{j=i}^{n}\prod_{k=i}^{j-1}b_k\,a_{k+1}\frac{\prod_{l=j+1}^{n}g_l}{\prod_{l=1}^{j-1}g_l}f_j + q^{\frac n2}t\prod_{k=i}^{n}b_k\,a_{k+1} + q^{n-1}t\sum_{j=1}^{i-1}\prod_{k=0}^{j-1}b_k\,a_{k+1}\prod_{k=i}^{n}b_k\,a_{k+1}\frac{\prod_{l=j+1}^{n}g_l}{\prod_{l=1}^{j-1}g_l}f_j,
\end{align*}
for $i=1,\ldots,n+1$.
Here the forward and backward $q$-shifts are defined by
\[
	\overline{x(t)} = x(q\,t),\quad
	\underline{x(t)} = x(q^{-1}t).
\]
Note that $q$-$P_{(2,2)}$ coincides with the $q$-Painlev\'e VI equation.
System \eqref{Eq:q-FST} admits an affine Weyl group symmetry of type $A^{(1)}_{2n+1}$.
Let $\tilde{r}_0,\ldots,\tilde{r}_{2n+1}$ be birational transformations defined by
\begin{align*}
	&\tilde{r}_{2j}(b_j) = a_{j+1},\quad
	\tilde{r}_{2j}(a_{j+1}) = b_j,\quad
	\tilde{r}_{2j}(b_i) = b_i,\quad
	\tilde{r}_{2j}(a_{i+1}) = a_{i+1},\quad
	\tilde{r}_{2j}(t) = t, \\
	&\tilde{r}_{2j+1}(a_{j+1}) = b_{j+1}\quad
	\tilde{r}_{2j+1}(b_{j+1}) = a_{j+1},\quad
	\tilde{r}_{2j+1}(a_{i+1}) = a_{i+1},\quad
	\tilde{r}_{2j+1}(b_{i+1}) = b_{i+1},\quad
	\tilde{r}_{2j+1}(t) = t,
\end{align*}
for $i,j=0,\ldots,n$ with $i\neq j$,
\[
	\tilde{r}_{2j}(f_i) = f_i,\quad
	\tilde{r}_{2j}(g_i) = g_i\quad (i=1,\ldots,n,\ j=0,\ldots,n),
\]
and
\begin{align*}
	&\tilde{r}_1(f_i) = f_i\left(\frac{R_1^{a,a,a}}{R_1^{b,a,b}}\right)^{\delta_{i,1}}\left(\frac{R_1^{b,a,a}}{{R_1^{b,a,b}}}\right)^{1-\delta_{i,1}},\quad
	\tilde{r}_1(g_i) = g_i\left(\frac{R_1^{b,a,a}}{R_1^{b,b,b}}\right)^{\delta_{i,1}}, \\
	&\tilde{r}_{2j+1}(f_i) = f_i\left(\frac{R_{j+1}^{b,a,b}}{R_{j+1}^{b,a,a}}\right)^{\delta_{i,j}}\left(\frac{R_{j+1}^{a,a,a}}{R_{j+1}^{b,a,a}}\right)^{\delta_{i,j+1}},\quad
	\tilde{r}_{2j+1}(g_i) = g_i\left(\frac{R_{j+1}^{b,b,b}}{R_{j+1}^{b,a,a}}\right)^{\delta_{i,j}-\delta_{i,j+1}}, \\
	&\tilde{r}_{2n+1}(f_i) = f_i\left(\frac{R_{n+1}^{b,a,b}}{R_{n+1}^{a,a,a}}\right)^{\delta_{i,n}}\left(\frac{R_{n+1}^{b,a,a}}{R_{n+1}^{a,a,a}}\right)^{1-\delta_{i,n}},\quad
	\tilde{r}_{2n+1}(g_i) = g_i\left(\frac{R_{n+1}^{b,b,b}}{R_{n+1}^{b,a,a}}\right)^{\delta_{i,n}},
\end{align*}
for $i=1,\ldots,n$ and $j=1,\ldots,n-1$, where
\begin{align*}
	R_j^{\alpha,\beta,\gamma} &= (g_j-\alpha_j)\frac{1}{f_j} + \left(\beta_j\,\frac{b_{j-1}}{g_{j-1}}-\gamma_j\right)\frac{1}{f_{j-1}}\quad (j=1,\ldots,n), \\
	R_{n+1}^{\alpha,\beta,\gamma} &= \frac{1}{q}(g_0-q\,\alpha_{n+1}) + \left(\beta_{n+1}\,\frac{b_n}{g_n}-\gamma_{n+1}\right)\frac{1}{f_n}.
\end{align*}
Also let $\tilde{\pi}$ be a birational transformation defined by
\begin{align*}
	&\tilde{\pi}(a_i) = \frac{b_i}{q^{\rho_1}},\quad
	\tilde{\pi}(b_i) = \frac{a_{i+1}}{q^{\rho_1}}\quad (i=1,\ldots,n),\quad
	\tilde{\pi}(a_{n+1}) = \frac{b_{n+1}}{q^{\rho_1}},\quad
	\tilde{\pi}(b_{n+1}) = \frac{a_1}{q^{\rho_1+1}}, \\
	&\tilde{\pi}(\rho_1) = -\rho_1-\frac{1}{n+1},\quad
	\tilde{\pi}(t) = \frac{q^2}{t},
\end{align*}
where
\[
	q^{\rho_1} = (q^n\,a_1\,b_1\ldots a_{n+1}\,b_{n+1})^{\frac{1}{n+1}},
\]
and
\begin{align*}
	&\tilde{\pi}(f_i) = \frac{q^2}{t}\frac{(g_i\,R_i^{*}-b_{i+1}\,R_{i+1}^{*})(b_{i+1}-g_{i+1})\left(R_{i+1}^{*}+1-\frac tq\right)f_1}{(g_0\,R_0^{*}-b_1\,R_1^{*})(b_1-g_1)\left(R_1^{*}+1-\frac tq\right)f_{i+1}},\quad
	\tilde{\pi}(g_i) = \frac{a_{i+1}}{q^{\rho_1}}\frac{b_{i+1}\,R_{i+1}^{*}}{g_i\,R_i^{*}}, \\
	&\tilde{\pi}(f_n) = q\,\frac{(g_n\,R_n^{*}-b_{n+1}\,R_0^{*})(b_0-g_0)\left(R_0^{*}+1-\frac tq\right)f_1}{(g_0\,R_0^{*}-b_1\,R_1^{*})(b_1-g_1)\left(R_1^{*}+1-\frac tq\right)f_0},\quad
	\tilde{\pi}(g_n) = \frac{a_{n+1}}{q^{\rho_1}}\frac{b_{n+1}\,R_0^{*}}{g_n\,R_n^{*}},
\end{align*}
for $i=1,\ldots,n-1$, where
\[
	R_i^{*} = \frac{f_i}{b_i-g_i}\left(\sum_{j=0}^{i-1}\frac{\frac tq\left(1-\frac{a_{j+1}}{g_j}\right)(b_j-g_j)}{f_j}+\frac{\left(\frac tq-\frac{a_{i+1}}{g_i}\right)(b_i-g_i)}{f_i}+\sum_{j=i+1}^{n}\frac{\left(1-\frac{a_{j+1}}{g_j}\right)(b_j-g_j)}{f_j}\right),
\]
for $i=0,\ldots,n$.
Then system \eqref{Eq:q-FST} is invariant under the actions of $\tilde{r}_0,\ldots,\tilde{r}_{2n+1},\tilde{\pi}$.
Moreover, a group of birational transformations $\langle\tilde{r}_0,\ldots,\tilde{r}_{2n+1},\tilde{\pi}\rangle$ is isomorphic to an extended affine Weyl group $W(A^{(1)}_{2n+1})\rtimes\langle\tilde{\pi}\rangle$.

Those formulas were derived from the $q$-DS hierarchy.
We now obtain them from the extended affine Weyl group $\langle G,H,H'\rangle\rtimes\langle\pi,\pi',\rho\rangle$.

\begin{thm}\label{Thm:q-FST}
If we set
\[
	\frac{a_i}{b_i} = \alpha_{2i-1}\quad (i=1,\ldots,n+1),\quad
	\frac{b_i}{a_{i+1}} = \alpha_{2i}\quad (i=1,\ldots,n),\quad
	q^n\,t\,\prod_{i=1}^{n+1}a_i\,b_i = \beta_0,\quad
	t = \beta'_0,
\]
and
\[
	f_i = \frac{1+\varphi_{2i}}{1+\varphi_0}\,\prod_{j=i}^{n}\varphi_{2j+1}\,\varphi_{2j+2},\quad
	\frac{g_i}{b_i} = -\frac{1}{\varphi_{2i}}\quad (i=1,\ldots,n),
\]
then system \eqref{Eq:q-FST} is equivalent to the action of the translation $\tau_1$ on $(a_i,b_i,t,f_i,g_i)$.
Moreover, the birational transformations $\tilde{r}_0,\ldots,\tilde{r}_{2n+1},\tilde{\pi}$ are equivalent to $r_0,\ldots,r_{2n+1},s'_1\,\pi$ respectively.
\end{thm}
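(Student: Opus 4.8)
\textbf{Proof proposal for Theorem \ref{Thm:q-FST}.}
The plan is to verify the stated dictionary by brute substitution and then match the two group actions. First I would check that the parameter correspondence is consistent: from $a_i/b_i=\alpha_{2i-1}$, $b_i/a_{i+1}=\alpha_{2i}$, $q^n t\prod a_i b_i=\beta_0$ and $t=\beta'_0$ one recovers all of $\alpha_0,\dots,\alpha_{2n+1},\beta_0,\beta_1,\beta'_0,\beta'_1$ (using $\beta_0\beta_1=\beta'_0\beta'_1=q$ and the relation $\alpha_0\cdots\alpha_{2n+1}=q$), so the map is a genuine change of variables, not an overdetermined one. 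In particular $\varphi_0=y_1$ must be expressed through the $q$-$P$ variables; combining $g_i/b_i=-1/\varphi_{2i}$ for $i=1,\dots,n$ with $g_0=1/(q^{(n-2)/2}t\,g_1\cdots g_n)$ and $b_0=qb_{n+1}$ pins down $\varphi_0$ in terms of $t$, the $g_i$ and the $b_i$, and then $f_0=t$ together with the formula for $f_i$ forces the normalization of $\varphi_0$ through $1+\varphi_0=\bigl(\prod_{j=0}^n\varphi_{2j+1}\varphi_{2j+2}\bigr)/f$-type identity — this bookkeeping is the first thing to nail down cleanly.

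Next I would compute the action of $\tau_1=s_1 s'_1\pi^{-1}\pi'$ on the $\varphi_i$ and translate it through the dictionary. Since $\tau_1(\alpha_i)=\alpha_i$, $\tau_1(\beta_k)=q^{\delta_{k,0}-\delta_{k,1}}\beta_k$, $\tau_1(\beta'_k)=q^{\delta_{k,0}-\delta_{k,1}}\beta'_k$, the parameter side immediately gives $\tau_1(a_i/b_i)=a_i/b_i$, $\tau_1(b_i/a_{i+1})=b_i/a_{i+1}$, $\tau_1(t)=qt$ and $\tau_1(q^n t\prod a_i b_i)=q\cdot q^n t\prod a_i b_i$, i.e. $\tau_1$ really is the forward shift $t\mapsto qt$ with all $a_i,b_i$ fixed; so $\tau_1(\varphi_i)=\overline{\varphi_i}$ and the claim ``$\tau_1$ implies \eqref{Eq:q-FST}'' becomes the assertion that the images $\overline{f_i},\overline{g_i}$ defined via $\tau_1(\varphi_i)$ satisfy \eqref{Eq:q-FST}. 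Concretely I would: (i) write $s'_1(\varphi_i)$ and $s_1(\varphi_{2i}),s_1(\varphi_{2i+1})$ from the displayed formulas \eqref{Eq:s0s1_Transf_2} and the $s'_1$ formula; (ii) compose with $\pi^{-1}\pi'$, whose action is $\pi^{-1}\pi'(\varphi_i)=\pi^{-1}(\alpha_{i+1}/\varphi_{i+1})=\alpha_i/\varphi_i$ — in fact $\pi^{-1}\pi'$ sends $\varphi_i\mapsto\alpha_i/\varphi_i$, a pleasant simplification; (iii) substitute $g_i/b_i=-1/\varphi_{2i}$ and the $f_i$ formula, and recognize the sums $F_i$, $G_i$ in \eqref{Eq:q-FST} as the cluster-theoretic sums $\sum_j\prod_k(\cdots)(1+\varphi_{\cdots})$ appearing in the $s_0,s_1$ actions. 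The key identification is that $F_i$ corresponds (up to an overall $\varphi$-monomial coming from the $f$-normalization) to $\sum_{j=0}^{n}\bigl(\prod_{k=0}^{j-1}\varphi_{\cdot}\alpha_{\cdot}/\varphi_{\cdot}\bigr)(1+\varphi_{\cdot})$ and $G_i$ to the analogous sum with the roles of even/odd indices and $\alpha,\varphi$ swapped, so that the ratios $F_{i+1}G_i/F_iG_{i+1}$ and the $\tau_1$-image of $\varphi_{2i}$ match on the nose.

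For the symmetry statement I would use the already-proved fundamental relations. The parameter dictionary identifies the $\tilde r$-action on $(a_i,b_i)$ with the $r$-action on $(\alpha_i)$: e.g. $\tilde r_{2j}$ swaps $b_j\leftrightarrow a_{j+1}$, i.e. inverts $\alpha_{2j}=b_j/a_{j+1}$ and multiplies the neighbours $\alpha_{2j\pm1}$ by $\alpha_{2j}$, which is exactly $r_{2j}(\alpha_i)=\alpha_i\alpha_{2j}^{-a_{2j,i}}$; similarly $\tilde r_{2j+1}$ inverts $\alpha_{2j+1}=a_{j+1}/b_{j+1}$. On dependent variables one checks $\tilde r_i(f_\ell),\tilde r_i(g_\ell)$ against $r_i(\varphi_m)=\varphi_m\alpha^{-\delta+\delta}\bigl((\alpha_j+\varphi_j)/(1+\varphi_j)\bigr)^{\delta-\delta}$ pushed through the $f,g$ formulas — the quantities $R_j^{\alpha,\beta,\gamma}$ become, after substitution, proportional to $\alpha_m+\varphi_m$ or $1+\varphi_m$, which is where all the matching happens. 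For $\tilde\pi$ one verifies it acts on $(\alpha_i,\beta_k,\beta'_k)$ exactly as $s'_1\pi$ does ($\pi(\alpha_i)=\alpha_{i+1}$, $s'_1$ fixes $\alpha_i$, $s'_1\pi(\beta'_k)=s'_1(\beta'_{k})=q^{\cdots}\beta'_k$ etc., matching $\tilde\pi(t)=q^2/t$), and that $\tilde\pi$ on $f_i,g_i$ — with the auxiliary $R_i^{*}$ — is $s'_1\pi$ on $\varphi_i$; note $\tau_1=\mathcal U_1^{-1}\mathcal V^{-1}\mathcal V'=U_1^{-1}U'_0=s_1 s'_1\pi^{-1}\pi'$, while $s'_1\pi$ is one of the ``half-translation'' factors, so consistency of $\tau_1=(s'_1\pi)\cdot(\text{something})$ can be cross-checked against the relations in Proposition after Theorem \ref{Thm:Fund_Rel}.

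\textbf{Main obstacle.} The hard part will be step (iii): identifying, term by term, the combinatorial sums $F_i$ and $G_i$ of \eqref{Eq:q-FST} (with their mixed $b_k a_{k+1}$ and $g_l$ products and the $q^{n/2}$, $q^{n-1}$ prefactors) with the cluster sums $\sum_{j}\prod_k\varphi\,\alpha/\varphi\,(1+\varphi)$ coming out of \eqref{Eq:s0s1_Transf_2}, and then checking that the resulting rational expressions for $f_i\overline f_i$ and $g_i\overline g_i$ collapse to the compact right-hand sides of \eqref{Eq:q-FST}. This is a genuine (if mechanical) computation: one must track the overall $\varphi$-monomial prefactors introduced by the $f_i$-normalization $(1+\varphi_{2i})/(1+\varphi_0)\prod\varphi_{2j+1}\varphi_{2j+2}$ and by the $q$-powers in $g_0$, and confirm they cancel in the stated ratios. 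I expect the verification of the invariance of \eqref{Eq:q-FST} under $\tilde r_0,\dots,\tilde r_{2n+1},\tilde\pi$ to then follow formally once the one-step map $\tau_1$ is matched, since both sides are then generated inside the same extended affine Weyl group. Because of the size of these computations, the full details belong in Appendix \ref{App:Prf_q-FST}, reusing the intermediate equations established in the proof of Theorem \ref{Thm:Fund_Rel}.
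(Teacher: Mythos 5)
Your proposal is correct and follows essentially the same route as the paper's Appendix \ref{App:Prf_q-FST}: it uses the closed formulas for $s'_1\,s_1$ on the $\varphi_i$ (equations \eqref{Eq:Action_s'1s1_1} and \eqref{Eq:Action_s'1s1_3}), composes with the flip $\varphi_i\mapsto\alpha_i/\varphi_i$ coming from $\pi^{-1}\pi'$, substitutes the dictionary \eqref{Proof:Gen_q-PVI_g_3} to identify the cluster sums with $F_i$ and $G_i$ (the paper's \eqref{Proof:Gen_q-PVI_g_4}), and matches the $R$-quantities with the factors $(\alpha_j+\varphi_j)/(1+\varphi_j)$ for the Weyl group part. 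The computational obstacle you flag is exactly where the paper spends its effort.
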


We prove this theorem in Appendix \ref{App:Prf_q-FST}.
Note that we can identify a tuple of parameters and variables $(a_i,b_i,t,f_i,g_i)$ with $(\alpha_i,\beta_k,\beta'_k,\varphi_i)$ thanks to equations \eqref{Proof:Gen_q-PVI_par} and \eqref{Proof:Gen_q-PVI_g_3} given throughout the proof.

In the last, we formulate the birational transformations $\tilde{r}_0,\ldots,\tilde{r}_{2n+1},\tilde{\pi}^2$ in terms of $(2n+2)\times(2n+2)$ matrices following \cite{Suz2,Suz3}.
This formulation is called a Lax form and will be used in Section \ref{Sec:q-Garnier} to derive the $q$-Garnier system.
Note that
\[
	\tilde{\pi}^2 = s'_1\,\pi\,s'_1\,\pi = \pi^2 = (\pi')^2.
\]
Let $h$ be a dependent variable called a gauge freedom satisfying
\[
	\overline{h} = -\frac{F_{n+1}\,F_1\left(\frac{b_0}{\overline{g}_0}-1\right)\left(\overline{g}_0-a_1\right)}{t\,(t-1)^2\,\overline{g}_0},
\]
and
\begin{align*}
	&\tilde{r}_1(h) = h + \frac{(a_1-b_1)\,h}{t\,R_1^{b,a,a}},\quad
	\tilde{r}_{2n+1}(h) = h - \frac{(a_{n+1}-b_{n+1})\,h}{t\,R_{n+1}^{b,a,a}},\quad
	\tilde{r}_j(h) = h\quad (j\neq1,2n+1), \\
	&\tilde{\pi}^2(h) = \frac{f_1\,h}{t}.
\end{align*}
We consider a system of linear $q$-difference equations
\begin{equation}\label{qFST_Lax_1}
	T_{q,z}^{-1}(\psi) = M\,\psi,
\end{equation}
with a $(2n+2)\times(2n+2)$ matrix
\[
	M = \begin{pmatrix}M_{1,1}&M_{1,2}&&&&O\\&M_{2,2}&M_{2,3}&&&\\&&M_{3,3}&&\\&&&\ddots&&\\&O&&&M_{n,n}&M_{n,n+1}\\z\,M_{n+1,1}&&&&&M_{n+1,n+1}\end{pmatrix},
\]
where
\begin{align*}
	&M_{i,i} = \begin{pmatrix}a_i&\frac{t\,R_i^{b,a,a}}{h}\\0&b_i\end{pmatrix}\quad (i=1,\ldots,n+1), \\
	&M_{i,i+1} = \begin{pmatrix}-1&0\\\frac{f_i\,h}{t}&-1\end{pmatrix}\quad (i=1,\ldots,n),\quad
	M_{n+1,1} = \begin{pmatrix}-t&0\\h&-1\end{pmatrix}.
\end{align*}
Here the forward and backward $q$-shifts are defined by
\[
	T_{q,z}(x(z)) = x(q\,z),\quad
	T_{q,z}^{-1}(x(z)) = x(q^{-1}z).
\]
We also consider gauge transformations for system \eqref{qFST_Lax_1}
\begin{equation}\label{qFST_Lax_2}
	\tilde{r}_j(\psi) = \Gamma_j\,\psi\quad (j=0,\ldots,2n+1),\quad
	\tilde{\pi}^2(\psi) = \Pi\,\psi,
\end{equation}
where
\begin{align*}
	&\Gamma_0 = I - \frac{b_0-a_1}{q\,h}\,z^{-1}E_{1,2n+2},\quad
	\Gamma_{2j} = I - \frac{(b_j-a_{j+1})\,t}{f_j\,h}\,E_{2j+1,2j}\quad (j=1,\ldots,n), \\
	&\Gamma_{2j+1} = I - \frac{(a_{j+1}-b_{j+1})\,h}{t\,R_j^{b,a,a}}\,E_{2j+2,2j+1}\quad (j=0,\ldots,n), \\
	&\Pi = z^{-\frac{1}{n+1}}\left(\sum_{i=1}^{n}q^{\frac{i-1}{n+1}}E_{2i-1,2i+1}+\sum_{i=1}^{n}q^{\frac{i-1}{n+1}}\,t^{\frac{1}{n+1}}E_{2i,2i+2}+q^{\frac{n}{n+1}}\,t\,z\,E_{2n+1,1}+q^{\frac{n}{n+1}}\,t^{\frac{1}{n+1}}\,z\,E_{2n+2,2}\right).
\end{align*}
Here the symbol $I$ stands for the identity matrix and $E_{i,j}$ a $(2n+2)\times(2n+2)$ matrix with $1$ in $(i,j)$-th entry and $0$ elsewhere.
Then the compatibility condition of \eqref{qFST_Lax_1} and \eqref{qFST_Lax_2}
\[
	\tilde{r}_j(M)\,\Gamma_j = T_{q,z}^{-1}(\Gamma_j)\,M\quad (j=0,\ldots,2n+1),\quad
	\tilde{\pi}^2(M)\,\Pi = T_{q,z}^{-1}(\Pi)\,M,\quad
\]
implies the actions of birational transformations $\tilde{r}_0,\ldots,\tilde{r}_{2n+1},\tilde{\pi}^2$ on $(a_i,b_i,t,f_i,g_i)$ defined above.

\begin{rem}
In addition to the simple reflections $r_i=\tilde{r}_i$ $(i\in\mathbb{Z}_{2n+2})$ and the Dynkin diagram automorphism $\pi^2=(\pi')^2=\tilde{\pi}^2$, we have formulated the translation $\tau_1$ in a framework of a Lax form in \cite{Suz2,Suz3}.
However we haven't done the same for the other transformations $s_0,s_1,s'_0,s'_1,\pi,\pi',\rho$ or the mutations $\mu_1,\ldots,\mu_{4n+4}$ yet.
It is a future problem.
\end{rem}

\section{Sakai's $q$-Garnier system}\label{Sec:q-Garnier}

The $q$-Garnier system was proposed by Sakai as a $q$-analogue of the isomonodromy deformation of the second order Fuchsian differential equation with $n+3$ regular singular points in \cite{Sak2}.
It is uniquely determined as the compatibility condition of a system of linear $q$-difference equations with $2\times2$ matrices, whose detail is given in Theorem \ref{Thm:q-Garnier}.
Afterward, it was investigated by a Pad\'e method in \cite{NagY1,NagY2}.
However, in both previous works, the explicit formulas of the $q$-Garnier system are large and complicated.
Hence it is hard to verify that the system is equivalent to the action of the translation $\tau_2$ on $(\alpha_i,\beta_k,\beta'_k,\varphi_i)$.
Therefore, in this section, we show the equivalency at the level of Lax forms.

Thanks to Theorem \ref{Thm:q-FST}, we can give the translation $\tau_2$ as a composition of the transformations $\tilde{r}_0,\ldots,\tilde{r}_{2n+1},\tilde{\pi}^2$ by
\[
	\tau_2 = \tilde{r}_{n+1}\ldots\tilde{r}_{2n+1}\,\tilde{r}_0\ldots\tilde{r}_{n-1}\,\tilde{r}_{2n+1}\,\tilde{r}_0\ldots\tilde{r}_{2n-1}\,\tilde{\pi}^2.
\]
In Section \ref{Sec:q-FST} those transformations have been formulated in a framework of the Lax form.
This fact allows us to formulate a system of linear $q$-difference equations with $(2n+2)\times(2n+2)$ matrices
\begin{equation}\label{Eq:q-Garnier_Lax_1}
	T_{q,z}^{-1}(\psi) = M\,\psi,\quad
	\tau_2(\psi) = \Gamma\,\psi,
\end{equation}
whose compatibility condition $\tau_2(M)\,\Gamma=T_{q,z}^{-1}(\Gamma)\,M$ implies the action of $\tau_2$ on $(a_i,b_i,t,f_i,g_i)$.
Here the matrix $\Gamma$ is given by
\begin{align*}
	\Gamma &= \tilde{r}_{n+2}\ldots\tilde{r}_{2n+1}\,\tilde{r}_0\ldots\tilde{r}_{n-1}\,\tilde{r}_{2n+1}\,\tilde{r}_0\ldots\tilde{r}_{2n-1}\,\tilde{\pi}^2(\Gamma_{n+1}) \\
	&\quad\times \tilde{r}_{n+3}\ldots\tilde{r}_{2n+1}\,\tilde{r}_0\ldots\tilde{r}_{n-1}\,\tilde{r}_{2n+1}\,\tilde{r}_0\ldots\tilde{r}_{2n-1}\,\tilde{\pi}^2(\Gamma_{n+2}) \\
	&\quad\times \ldots \\
	&\quad\times \tilde{r}_{2n-1}\,\tilde{\pi}^2(\Gamma_{2n-2}) \\
	&\quad\times \tilde{\pi}^2(\Gamma_{2n-1}) \\
	&\quad\times \Pi.
\end{align*}
It is also described as
\[
	\Gamma = z^{-\frac{1}{n+1}}\begin{pmatrix}\Gamma_{1,1}&\Gamma_{1,2}&&&&O\\&\Gamma_{2,2}&\Gamma_{2,3}&&&\\&&\Gamma_{3,3}&&\\&&&\ddots&&\\&O&&&\Gamma_{n,n}&\Gamma_{n,n+1}\\z\,\Gamma_{n+1,1}&&&&&\Gamma_{n+1,n+1}\end{pmatrix},
\]
where
\begin{align*}
	&\Gamma_{i,i} = \begin{pmatrix}\gamma_{2i-1,2i-1}&\gamma_{2i-1,2i}\\0&\gamma_{2i,2i}\end{pmatrix}\quad (i=1,\ldots,n+1), \\
	&\Gamma_{i,i+1} = \begin{pmatrix}q^{\frac{i-1}{n+1}}&0\\\gamma_{2i,2i+1}&q^{\frac{i-1}{n+1}}\,t^{\frac{1}{n+1}}\end{pmatrix}\quad (i=1,\ldots,n),\quad
	\Gamma_{n+1,1} = \begin{pmatrix}q^{\frac{n}{n+1}}\,t&0\\\gamma_{2n+2,1}&q^{\frac{n}{n+1}}\,t^{\frac{1}{n+1}}\end{pmatrix}.
\end{align*}
Each of $\gamma_{i,j}$ is rational in the entries of the matrix $M$.
We don't give its explicit formula here.
We transform system \eqref{Eq:q-Garnier_Lax_1} to that with $2\times2$ matrices, which determines the $q$-Garnier system.
Note that the translation $\tau_2$ acts on the parameters and the independent variable as
\begin{align*}
	&\tau_2(a_i) = q^{\frac{1}{n+1}}\,a_i\quad (i=1,\ldots,n+1), \\
	&\tau_2(b_i) = q^{\frac{1}{n+1}}\,b_i\quad (i=1,\ldots,n,\ i\neq m),\quad
	\tau_2(b_m) = q^{-\frac{n}{n+1}}\,b_m,\quad
	\tau_2(b_{n+1}) = q^{-\frac{n}{n+1}}\,b_{n+1}, \\
	&\tau_2(t) = t,
\end{align*}
for $n=2m-1$ and
\begin{align*}
	&\tau_2(a_i) = q^{\frac{1}{n+1}}\,a_i\quad (i=1,\ldots,n+1,\ i\neq m),\quad
	\tau_2(a_m) = q^{-\frac{n}{n+1}}\,a_m \\
	&\tau_2(b_i) = q^{\frac{1}{n+1}}\,b_i\quad (i=1,\ldots,n),\quad
	\tau_2(b_{n+1}) = q^{-\frac{n}{n+1}}\,b_{n+1}, \\
	&\tau_2(t) = t,
\end{align*}
for $n=2m$.

We consider a gauge transformation
\[
	\hat{\psi} = z^{\log_qa_1}\,\psi.
\]
Then system \eqref{Eq:q-Garnier_Lax_1} is transformed to
\begin{equation}\label{Eq:q-Garnier_Lax_2}
	T_{q,z}^{-1}(\hat{\psi}) = \frac{1}{a_1}\,M\,\hat{\psi},\quad
	\tau_2(\hat{\psi}) = z^{\frac{1}{n+1}}\,\Gamma\,\hat{\psi}.
\end{equation}
We next consider a $q$-Laplace transformation
\[
	z\,\hat{\psi} \to T_{q,z}(\phi),\quad
	T_{q,z}^{-1}(\hat{\psi}) \to z\,\phi.
\]
Then system \eqref{Eq:q-Garnier_Lax_2} is transformed to
\begin{equation}\label{Eq:q-Garnier_Lax_3}
	\frac{1}{a_1}\,M_1\,T_{q,z}(\phi) = \left(z\,I-\frac{1}{a_1}\,M_0\right)\phi,\quad
	\tau_2(\phi) = z^{\frac{1}{n+1}}\,\Gamma_0\,\phi + z^{\frac{1}{n+1}}\,\Gamma_1\,T_{q,z}(\phi),
\end{equation}
where
\[
	M(z) = M_0 + z\,M_1,\quad
	\Gamma(z) = \Gamma_0 + z\,\Gamma_1.
\]
System \eqref{Eq:q-Garnier_Lax_3} is rewritten into
\begin{align*}
	T_{q,z}\begin{pmatrix}\phi_1\\\phi_2\end{pmatrix} &= \hat{M}_{n+1,1}^{-1}(z\,I-\hat{M}_{n+1,n+1})\begin{pmatrix}\phi_{2n+1}\\\phi_{2n+2}\end{pmatrix}, \\
	\begin{pmatrix}\phi_{2i+1}\\\phi_{2i+2}\end{pmatrix} &= \hat{M}_{i,i+1}^{-1}(z\,I-\hat{M}_{i,i})\begin{pmatrix}\phi_{2i-1}\\\phi_{2i}\end{pmatrix}\quad (i=1,\ldots,n), \\
	\tau_2\begin{pmatrix}\phi_{2i-1}\\\phi_{2i}\end{pmatrix} &= \Gamma_{i,i}\begin{pmatrix}\phi_{2i-1}\\\phi_{2i}\end{pmatrix} + \Gamma_{i,i+1}\begin{pmatrix}\phi_{2i+1}\\\phi_{2i+2}\end{pmatrix}\quad (i=1,\ldots,n), \\
	\tau_2\begin{pmatrix}\phi_{2n+1}\\\phi_{2n+2}\end{pmatrix} &= \Gamma_{n+1,n+1}\begin{pmatrix}\phi_{2n+1}\\\phi_{2n+2}\end{pmatrix} + z\,\Gamma_{n+1,1}\,T_{q,z}\begin{pmatrix}\phi_1\\\phi_2\end{pmatrix},
\end{align*}
where
\[
	\hat{M}_{i,j} = \frac{1}{a_1}\,M_{i,j},\quad
	\phi = \begin{pmatrix}\phi_1\\\vdots\\\phi_{2n+2}\end{pmatrix},
\]
from which we obtain
\begin{equation}\label{Eq:q-Garnier_Lax_4}
	T_{q,z}(\Phi) = \mathcal{A}\,\Phi,\quad
	\tau_2(\Phi) = \mathcal{B}\,\Phi,\quad
	\Phi = \begin{pmatrix}\phi_1\\\phi_2\end{pmatrix},
\end{equation}
where
\begin{align*}
	\mathcal{A} &= \hat{M}_{n+1,1}^{-1}\left(z\,I-\hat{M}_{n+1,n+1}\right)\hat{M}_{n,n+1}^{-1}\left(z\,I-\hat{M}_{n,n}\right)\ldots\hat{M}_{1,2}^{-1}\left(z\,I-\hat{M}_{1,1}\right), \\
	\mathcal{B} &= \Gamma_{1,1} + \Gamma_{1,2}\,\hat{M}_{1,2}^{-1}\left(z\,I-\hat{M}_{1,1}\right).
\end{align*}

\begin{thm}\label{Thm:q-Garnier}
The compatibility condition of \eqref{Eq:q-Garnier_Lax_1} implies that of \eqref{Eq:q-Garnier_Lax_4} $\tau_2(\mathcal{A})\,\mathcal{B}=T_{q,z}(\mathcal{B})\,\mathcal{A}$.
Moreover, the compatibility condition of \eqref{Eq:q-Garnier_Lax_4} is equivalent to an inverse direction of the $q$-Garnier system.
In other words, the matrices $\mathcal{A}$ and $\mathcal{B}$ satisfy the following properties.
\begin{enumerate}
\item
$\mathcal{A}(z)=\mathcal{A}_0+z\,\mathcal{A}_1+\ldots+z^{n+1}\mathcal{A}_{n+1}$.
\smallskip
\item
$\mathcal{A}_0\sim\begin{pmatrix}q^{-\frac n2}\,t^{-1}&0\\0&q^{\frac n2}\,a_1\,b_1\ldots a_{n+1}\,b_{n+1}\end{pmatrix}$, $\mathcal{A}_{n+1}\sim\begin{pmatrix}(-a_1)^{n+1}t^{-1}&0\\0&(-a_1)^{n+1}\end{pmatrix}$.
\smallskip
\item
$\det\mathcal{A}=\frac{a_1^{2n+2}}{t}\,(z-1)\left(z-\frac{b_1}{a_1}\right)\left(z-\frac{a_2}{a_1}\right)\left(z-\frac{b_2}{a_1}\right)\ldots\left(z-\frac{a_{n+1}}{a_1}\right)\left(z-\frac{b_{n+1}}{a_1}\right)$.
\smallskip
\item
$\det\mathcal{B}=\left\{\begin{array}{ll}t^{\frac{1}{n+1}}\,a_1^2\left(z-\frac{b_m}{a_1}\right)\left(z-\frac{b_{n+1}}{a_1}\right)&(n=2m-1)\\[4pt]t^{\frac{1}{n+1}}\,a_1^2\left(z-\frac{a_m}{a_1}\right)\left(z-\frac{b_{n+1}}{a_1}\right)&(n=2m)\end{array}\right.$.
\end{enumerate}
\end{thm}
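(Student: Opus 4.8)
The plan is to build the $(2n+2)\times(2n+2)$ Lax pair $(M,\Gamma)$ out of the already-established Lax forms for $\tilde r_0,\ldots,\tilde r_{2n+1},\tilde\pi^2$ (Section \ref{Sec:q-FST}), reduce it to a $2\times2$ system, and then check properties (1)--(4) directly on the resulting $2\times2$ matrices $\mathcal A,\mathcal B$. First I would note that, since $\tau_2=\tilde r_{n+1}\ldots\tilde r_{2n+1}\,\tilde r_0\ldots\tilde r_{n-1}\,\tilde r_{2n+1}\,\tilde r_0\ldots\tilde r_{2n-1}\,\tilde\pi^2$ as birational transformations (this is the factorization recorded just before \eqref{Eq:q-Garnier_Lax_1}, valid by Theorem \ref{Thm:q-FST}), the matrix $\Gamma$ defined as the corresponding ordered product of $\tilde r_j$-transforms of the $\Gamma_j$'s and $\Pi$ automatically satisfies the intertwining relation $\tau_2(M)\,\Gamma=T_{q,z}^{-1}(\Gamma)\,M$, because each factor $\Gamma_j$ (resp.\ $\Pi$) intertwines $M$ with its $\tilde r_j$-transform (resp.\ $\tilde\pi^2$-transform) and these relations compose. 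Hence the compatibility of \eqref{Eq:q-Garnier_Lax_1} is a formal consequence of the results of Section \ref{Sec:q-FST}; no new computation is needed there beyond checking that $\Gamma$ has the claimed block-bidiagonal shape, which follows from multiplying together the elementary matrices $\Gamma_j=I+(\text{scalar})E_{\bullet,\bullet}$ and the single ``shift'' matrix $\Pi$ and tracking which entries survive.

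Next I would carry out the reduction to $2\times2$. Applying the gauge factor $z^{\log_q a_1}$ and the $q$-Laplace transform $z\hat\psi\to T_{q,z}(\phi)$, $T_{q,z}^{-1}(\hat\psi)\to z\phi$ turns \eqref{Eq:q-Garnier_Lax_1} into the first-order three-term form \eqref{Eq:q-Garnier_Lax_3}; writing $M=M_0+zM_1$, $\Gamma=\Gamma_0+z\Gamma_1$ and using the block structure (each $M_{i,i}$ upper-triangular, each $M_{i,i+1}$ lower-unitriangular) one peels off the components pair by pair exactly as displayed, arriving at \eqref{Eq:q-Garnier_Lax_4} with $\mathcal A=\hat M_{n+1,1}^{-1}(zI-\hat M_{n+1,n+1})\hat M_{n,n+1}^{-1}(zI-\hat M_{n,n})\cdots\hat M_{1,2}^{-1}(zI-\hat M_{1,1})$ and $\mathcal B=\Gamma_{1,1}+\Gamma_{1,2}\hat M_{1,2}^{-1}(zI-\hat M_{1,1})$. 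Because each $\hat M_{i,i+1}$ is lower-unitriangular (hence $\hat M_{i,i+1}^{-1}$ is too, with $\det=1$), $\mathcal A$ is a product of $n+1$ affine-in-$z$ factors, giving property (1) with $\deg_z\mathcal A=n+1$; $\mathcal A_{n+1}$ is the product of the $n+1$ leading matrices $\hat M_{i,i+1}^{-1}$, a lower-unitriangular matrix, so it is conjugate to $(-a_1)^{-(n+1)}\,\mathrm{diag}(t^{-1},1)\cdot a_1^{n+1}$-type scalar times $I$ after accounting for the $1/a_1$ normalizations and the $\hat M_{n+1,1}^{-1}$ factor carrying the $-t$; tracking the scalars yields the stated $\mathcal A_{n+1}$ in (2). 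For $\mathcal A_0$ one sets $z=0$ and computes the product of the $-\hat M_{i,i}$'s sandwiched by the $\hat M_{i,i+1}^{-1}$'s; since each $\hat M_{i,i}$ is upper-triangular with eigenvalues $a_i/a_1$ and $b_i/a_1$, the product is triangular with diagonal $\big(\prod a_i\big)/a_1^{n+1}\cdot(\mp t$-factor$)$ and $\big(\prod b_i\big)/a_1^{n+1}$, and after the $\hat M_{n+1,1}^{-1}$ twist and simplification using $q^{\rho_1}$ one recovers $\mathcal A_0\sim\mathrm{diag}(q^{-n/2}t^{-1},\,q^{n/2}a_1 b_1\cdots a_{n+1}b_{n+1})$. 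Property (3) is the cleanest: $\det\mathcal A=\prod_{i}\det\!\big(\hat M_{i,i+1}^{-1}\big)\cdot\prod_i\det\!\big(zI-\hat M_{i,i}\big)\cdot\det\hat M_{n+1,1}^{-1}$, and $\det(zI-\hat M_{i,i})=(z-a_i/a_1)(z-b_i/a_1)$ for $i\le n+1$ while the $\hat M_{n+1,1}$ factor contributes $(z-1)$ after the $q$-Laplace bookkeeping, together with the scalar $a_1^{2n+2}/t$; collecting gives the displayed product. Property (4) uses $\det\mathcal B=\det\Gamma_{1,1}\cdot\det\!\big(I+\Gamma_{1,1}^{-1}\Gamma_{1,2}\hat M_{1,2}^{-1}(zI-\hat M_{1,1})\big)$; since $\Gamma_{1,2}$ is rank-one-ish (lower, with a single off-diagonal entry $\gamma_{2,3}$) the second determinant is affine in $z$, $\mathcal B$ is degree one, and the two roots are read off from where the $2\times2$ degenerates — these turn out to be $z=b_m/a_1$ (or $a_m/a_1$ when $n=2m$) and $z=b_{n+1}/a_1$, consistent with the shift $\tau_2$ acting on $(a_i,b_i)$ as recorded above; the leading coefficient $t^{1/n+1}a_1^2$ comes from $\det\Gamma_{1,1}=\gamma_{1,1}\gamma_{2,2}$ times the $z^{1/(n+1)}$ normalization.

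The identification of the compatibility of \eqref{Eq:q-Garnier_Lax_4} with an \emph{inverse direction} of the $q$-Garnier system is then a matching statement: properties (1)--(4) are precisely the defining data of Sakai's $2\times2$, $(n+1)$-dimensional lattice in \cite{Sak2} (a linear $q$-difference equation of degree $n+1$ in $z$ with the prescribed spectral type at $z=0,\infty$, $n+3$ fixed singular points $1,b_1/a_1,a_2/a_1,\ldots$ coming from $\det\mathcal A$, and a one-step deformation matrix $\mathcal B$ of degree one whose determinant pins down the direction of the time evolution), so once (1)--(4) are verified the compatibility condition $\tau_2(\mathcal A)\mathcal B=T_{q,z}(\mathcal B)\mathcal A$ must coincide with Sakai's equation up to the direction of the shift, which the sign pattern in $\tau_2(a_i),\tau_2(b_i)$ makes ``inverse.''

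\textbf{Main obstacle.} The genuinely laborious step is computing the block product defining $\Gamma$ (equivalently, the entries $\gamma_{i,j}$) and then the $2\times2$ matrix $\mathcal B$ explicitly enough to pin down $\det\mathcal B$ and the degenerate points in (4): this requires carrying the long ordered product of the $\tilde r_j$-conjugated elementary matrices through the $q$-Laplace reduction, and is exactly the kind of large calculation the authors relegate to an appendix. By contrast, properties (1)--(3) for $\mathcal A$ are essentially structural (triangular/unitriangular block bookkeeping plus determinant multiplicativity) and should go through quickly; the compatibility of \eqref{Eq:q-Garnier_Lax_1} is free from the Section \ref{Sec:q-FST} results, and the final identification with \cite{Sak2} is a citation-level matching of normal forms rather than a computation.
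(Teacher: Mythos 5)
Your overall architecture (build $\Gamma$ from the Section \ref{Sec:q-FST} Lax data, reduce via gauge plus $q$-Laplace transform to the $2\times2$ pair, then verify (1)--(4)) matches the paper, and your treatment of the first half, of (1), of $\mathcal{A}_{n+1}$ in (2), and of (3) is essentially the paper's (modulo one slip in (3): the factor $(z-1)$ is $(z-a_1/a_1)$ coming from $\det(z\,I-\hat{M}_{1,1})$, not a contribution of the constant block $\hat{M}_{n+1,1}$, which carries no $z$). The genuine gap is your argument for the eigenvalues of $\mathcal{A}_0$. Setting $z=0$ gives $\mathcal{A}_0=\hat{M}_{n+1,1}^{-1}(-\hat{M}_{n+1,n+1})\hat{M}_{n,n+1}^{-1}(-\hat{M}_{n,n})\cdots\hat{M}_{1,2}^{-1}(-\hat{M}_{1,1})$, an \emph{alternating} product of lower-unitriangular and upper-triangular factors; such a product is not triangular, and its eigenvalues are not products of the factors' diagonal entries. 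Indeed the diagonal you propose (a $\prod a_i$-type entry and a $\prod b_i$-type entry) is not what the statement asserts, namely $q^{-n/2}t^{-1}$ and $q^{n/2}a_1b_1\cdots a_{n+1}b_{n+1}$, and no amount of ``twisting by $\hat{M}_{n+1,1}^{-1}$'' recovers the correct pair from an incorrect characteristic polynomial. The paper needs a global input here: by block row-reduction of the full $(2n+2)\times(2n+2)$ matrix one shows $\det M=t\,\det(z\,I-\mathcal{A}_0)$, and then the factorization $\det M=(t\,z-q^{-n/2})(z-q^{n/2}a_1b_1\cdots a_{n+1}b_{n+1})$ is quoted from Lemma 2.4 of \cite{FS2}. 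Some such identity (or an equivalent trace computation) is indispensable; your local bookkeeping cannot produce it.

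Separately, you have misjudged where the labor lies in (4). You flag the explicit computation of the entries $\gamma_{i,j}$ of $\Gamma$ as the main obstacle, but the paper never computes them. It only uses that the leading coefficient of $\mathcal{B}$ is $\Gamma_{1,2}\hat{M}_{1,2}^{-1}=-a_1\left(\begin{smallmatrix}1&0\\ {*}&t^{1/(n+1)}\end{smallmatrix}\right)$, whence $\det\mathcal{B}=t^{1/(n+1)}a_1^2(z-\zeta_1)(z-\zeta_2)$ with unknown roots, and then takes determinants of the already-established compatibility relation: $\det\tau_2(\mathcal{A})\,\det\mathcal{B}=\det T_{q,z}(\mathcal{B})\,\det\mathcal{A}$. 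Substituting property (3) and the recorded action of $\tau_2$ on $(a_i,b_i)$ turns this into an identity of quartic polynomials in $z$ that forces $\zeta_1=b_m/a_1$ and $\zeta_2=b_{n+1}/a_1$ (resp.\ $a_m/a_1$ for $n=2m$). You gesture at ``consistency with the shift $\tau_2$'' but do not supply this determinant argument, and without it your claim that the roots ``turn out to be'' the stated values is unsupported; with it, (4) is a short computation rather than the large one you anticipate.
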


We prove this theorem in Appendix \ref{App:Prf_q-Garnier}.
Hence the action of $\tau_2$ on $(a_i,b_i,t,f_i,g_i)$ is equivalent to an inverse direction of the $q$-Garnier system.

\begin{rem}
The $q$-Garnier system is a $q$-analogue of a system of partial differential equations and hence has multiple directions of discrete time evolutions.
All directions of the $q$-Garnier system are given by translations $T_i\,T_j$ $(i,j\in\mathbb{Z}_{2n+2},\ i\neq j)$.
\end{rem}

\begin{rem}
The transformation from \eqref{Eq:q-Garnier_Lax_2} to \eqref{Eq:q-Garnier_Lax_4} is suggested by \cite{NagY2}, in which a connection between the $q$-Garnier system and $q$-$P_{(n+1,n+1)}$ is clarified at the level of Lax forms with the aid of a duality in a reduction of the $q$-KP hierarchy.
\end{rem}

\begin{rem}
We can regard $(f_i,g_i)$ as new coordinates of the $q$-Garnier system.
However it is hard to give a transformation between $(f_i,g_i)$ and the coordinates given in \cite{Sak2} or \cite{NagY1,NagY2}.
The transformation is complicated and sometimes algebraic.
\end{rem}

\section{Tsuda's $q$-Painlev\'e system arising from $q$-LUC hierarchy}\label{Sec:q-Tsuda}

The UC hierarchy was proposed by Tsuda as a generalization of the KP hierarchy for the irreducible characters of the general linear group appearing as solutions of the hierarchies in \cite{T1}.
More precisely, the solutions of the KP hierarchy and the UC hierarchy are described in terms of the Schur function and the universal character respectively.
Similarly as the DS hierarchies, the UC hierarchy implies several continuous Painlev\'e equations and generalizations via similarity reductions.
Tsuda also proposed a $q$-analogue of the UC hierarchy named the lattice $q$-UC hierarchy in \cite{T2} and investigated a connection between the hierarchy and $q$-Painlev\'e systems in \cite{T3}.
As a result, he obtained a system of $q$-difference equations
\begin{equation}\begin{split}\label{Eq:q-LUC}
	\overline{f}_{i,n-i+2k} &= \frac{c_{i,n-i+2k}}{c_{i,n-i+2k+1}}\frac{(g_{i+1,n-i+2k}-\alpha)(g_{i,n-i+2k+1}-c_{i,n-i+2k+1}\,\beta)}{(g_{i,n-i+2k+1}-\alpha)(g_{i+1,n-i+2k}-c_{i+1,n-i+2k}\,\beta)}\,f_{i+1,n-i+2k+1}, \\
	\overline{g}_{i,n-i+2k-1} &= \frac{c_{i+1,n-i+2k-1}}{c_{i+1,n-i+2k}}\frac{\left(\overline{f}_{i+1,n-i+2k-1}-q^{\frac{1}{n+1}}\,\gamma\right)\left(\overline{f}_{i,n-i+2k}-c_{i,n-i+2k}\,\delta\right)}{\left(\overline{f}_{i,n-i+2k}-q^{\frac{1}{n+1}}\,\gamma\right)\left(\overline{f}_{i+1,n-i+2k-1}-c_{i+1,n-i+2k-1}\,\delta\right)}\,g_{i+1,n-i+2k},
\end{split}\end{equation}
where the indices $i$ and $j$ of $c_{i,j},f_{i,j},g_{i,j}$ are congruent modulo $n+1$, with constraints
\begin{align*}
	&\alpha\,\delta = \beta\,\gamma,\quad
	\frac{c_{i,n-i+2k}\,c_{i+1,n-i+2k+1}}{c_{i+1,n-i+2k}\,c_{i,n-i+2k+1}} = 1,\quad
	\frac{c_{i,n-i+2k-1}\,c_{i+1,n-i+2k}}{c_{i+1,n-i+2k-1}\,c_{i,n-i+2k}} = 1\quad (i,k=0,\ldots,n), \\
	&\prod_{i=0}^{n}f_{i,n-i+2k} = 1,\quad
	\prod_{i=0}^{n}g_{i,n-i+2k-1} = 1\quad (k=0,\ldots,n).
\end{align*}
Note that system \eqref{Eq:q-LUC} coincides with the $q$-Painlev\'e VI equation in the case $n=1$.
In this section we show that system \eqref{Eq:q-LUC} is equivalent to the action of the transformation $\tau_3$ on $(\alpha_i,\beta_k,\beta'_k,\varphi_i)$.

Let ${\boldsymbol t}=(t_0,t_1)$ be a $2$-tuple of independent variables defined by
\[
	t_0 = \prod_{i=0}^{n}\frac{1}{\varphi_{2i}} = \left(\frac{1}{\beta_0\,\beta'_0}\prod_{i=0}^{n}\alpha_{2i+1}\right)^{\frac12},\quad
	t_1 = \prod_{i=0}^{n}\frac{\alpha_{2i+1}}{\varphi_{2i+1}} = \left(\frac{\beta_0}{\beta'_0}\prod_{i=0}^{n}\alpha_{2i+1}\right)^{\frac12}.
\]
Then the transformation $\tau_3$ acts on ${\boldsymbol t}$ as
\[
	\tau_3({\boldsymbol t}) = q\,{\boldsymbol t} = (q\,t_0,q\,t_1).
\]
Also let ${\boldsymbol c}=(c_0,\ldots,c_{2n+1})$ be a $(2n+2)$-tuple of parameters defined by
\[
	c_{2i} = t_0^{\frac{1}{n+1}}t_1^{\frac{1}{n+1}}\alpha_{2i},\quad
	c_{2i+1} = t_0^{-\frac{1}{n+1}}t_1^{-\frac{1}{n+1}}\alpha_{2i+1}\quad (i=0,\ldots,n).
\]
Then ${\boldsymbol c}$ is invariant under an iterative action of $\tau_3$ as
\[
	\tau_3^m({\boldsymbol c}) = {\boldsymbol c},
\]
for $n=2m-1$ and
\[
	\tau_3^{n+1}({\boldsymbol c}) = {\boldsymbol c},
\]
for $n=2m$.
We now regard $\varphi_i$ $(i\in\mathbb{Z}_{2n+2})$ as variables depending on $({\boldsymbol t},{\boldsymbol c})$ and denote them by $\varphi_i({\boldsymbol t},{\boldsymbol c})$.
The transformation $\tau_3$ acts on $\varphi_i({\boldsymbol t},{\boldsymbol c})$ as
\[
	\tau_3(\varphi_i({\boldsymbol t},{\boldsymbol c})) = \varphi_i(q\,{\boldsymbol t},\tau_3({\boldsymbol c})).
\]
We define a set of dependent variables and parameters satisfying system \eqref{Eq:q-LUC} with the aid of the action of $\tau_3$ on ${\boldsymbol c}$.

\begin{thm}\label{Thm:q-Tsuda}
If we set
\begin{align*}
	&\alpha = -t_1^{\frac{1}{n+1}},\quad
	\beta = -t_0^{-\frac{1}{n+1}},\quad
	\gamma = -t_0^{\frac{1}{n+1}},\quad
	\delta = -t_1^{-\frac{1}{n+1}}, \\
	&c_{i,n-i+2k} = \frac{\tau_3^{-k}(c_{2i})}{q^{\frac{1}{n+1}}},\quad
	c_{i,n-i+2k-1} = \frac{1}{\tau_3^{-k}(c_{2i+1})}\quad (i,k=0,\ldots,n),
\end{align*}
and
\begin{align*}
	&f_{i,n-i+2k} = t_0^{\frac{1}{n+1}}\varphi_{2i}({\boldsymbol t},\tau_3^{-k}({\boldsymbol c})),\quad
	g_{i,n-i+2k-1} = \frac{t_0^{-\frac{1}{n+1}}\varphi_{2i+1}({\boldsymbol t},\tau_3^{-k}({\boldsymbol c}))}{\tau_3^{-k}(c_{2i+1})}, \\
	&\overline{f}_{i,n-i+2k} = q^{\frac{1}{n+1}}t_0^{\frac{1}{n+1}}\varphi_{2i}(q\,{\boldsymbol t},\tau_3^{-k}({\boldsymbol c})),\quad
	\overline{g}_{i,n-i+2k-1} = \frac{q^{-\frac{1}{n+1}}t_0^{-\frac{1}{n+1}}\varphi_{2i+1}(q\,{\boldsymbol t},\tau_3^{-k}({\boldsymbol c}))}{\tau_3^{-k}(c_{2i+1})},
\end{align*}
for $i,k=0,\ldots,n$, then they satisfy system \eqref{Eq:q-LUC}.
\end{thm}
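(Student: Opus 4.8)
The plan is to reduce Theorem \ref{Thm:q-Tsuda} to the already-established action of the transformation $\tau_3$ on $(\alpha_i,\beta_k,\beta'_k,\varphi_i)$, i.e.\ to the formulas $\tau_3(\alpha_{2i})=(\alpha_{2i+1}\alpha_{2i+2}\alpha_{2i+3})^{-1}$, $\tau_3(\alpha_{2i+1})=\alpha_{2i+1}\alpha_{2i+2}\alpha_{2i+3}\alpha_{2i+4}\alpha_{2i+5}$, together with the action of the simple reflections and Dynkin automorphisms on $\varphi_i$ recorded in Section \ref{Sec:Bir_Rep}. First I would verify the consistency of the \emph{Ansatz}: that the substitutions for $\alpha,\beta,\gamma,\delta$ and for $c_{i,j},f_{i,j},g_{i,j}$ indeed satisfy the constraints listed below \eqref{Eq:q-LUC}. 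The relation $\alpha\delta=\beta\gamma$ is immediate from $(-t_1^{1/(n+1)})(-t_1^{-1/(n+1)})=1=(-t_0^{-1/(n+1)})(-t_0^{1/(n+1)})$; the two multiplicative constraints on the $c_{i,j}$ follow from the definition $c_{2i}=t_0^{1/(n+1)}t_1^{1/(n+1)}\alpha_{2i}$, $c_{2i+1}=t_0^{-1/(n+1)}t_1^{-1/(n+1)}\alpha_{2i+1}$ together with $\prod_{i=0}^{2n+1}\alpha_i=q$ (so that $\tau_3$-shifts of the $c$'s telescope correctly); and the two product constraints $\prod_i f_{i,n-i+2k}=1$, $\prod_i g_{i,n-i+2k-1}=1$ are exactly the defining relations for $t_0,t_1$, namely $\prod_{i=0}^{n}\varphi_{2i}=t_0^{-1}$ and $\prod_{i=0}^n\alpha_{2i+1}/\varphi_{2i+1}=t_1$, re-expressed after clearing the powers of $t_0^{\pm 1/(n+1)}$ and the $c_{2i+1}$.

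Next I would reinterpret the two families of equations in \eqref{Eq:q-LUC} as a single birational map and identify it with $\tau_3$. Since the barred quantities are $\tau_3$-shifts of the unbarred ones (with $\tau_3$ acting on ${\boldsymbol t}$ by $q{\boldsymbol t}$ and on ${\boldsymbol c}$ by the stated cyclic shift), each equation of \eqref{Eq:q-LUC} becomes an assertion of the form $\tau_3(\varphi_{2i})=(\text{rational expression in the }\varphi_j\text{ and }\alpha_j)$ after substituting the dictionary of Theorem \ref{Thm:q-Tsuda}. The strategy here is to use the factorization $\tau_3=(r_0r_2\cdots r_{2n}\pi)^2$ and push the $\varphi_i$ through this word step by step, using the explicit action $r_j(\varphi_i)=\varphi_i\alpha^{\ldots}\bigl((\alpha_j+\varphi_j)/(1+\varphi_j)\bigr)^{\ldots}$ and $\pi(\varphi_i)=\varphi_{i+1}$. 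Alternatively—and this is probably cleaner—one can recognize that $T_1T_3\cdots T_{2n+1}=(r_0r_2\cdots r_{2n}\pi)^{n+1}$ is a translation, and that the $q$-LUC variables are naturally organized on a $2$-dimensional lattice (the indices $n-i+2k$), so that the $c_{i,j}$ with different $k$ are just the iterates $\tau_3^{-k}({\boldsymbol c})$; then \eqref{Eq:q-LUC} is literally the statement that $\tau_3$ intertwines the level-$k$ and level-$(k+1)$ variables. I would match the rational shapes: the factor $(g_{i+1,*}-\alpha)(g_{i,*}-c_{i,*}\beta)/[(g_{i,*}-\alpha)(g_{i+1,*}-c_{i+1,*}\beta)]$, after the substitution $g_{i,n-i+2k-1}=t_0^{-1/(n+1)}\varphi_{2i+1}/c_{2i+1}$ and $\alpha=-t_1^{1/(n+1)}$, $\beta=-t_0^{-1/(n+1)}$, should collapse to a ratio of the linear factors $(\alpha_j+\varphi_j)$ and $(1+\varphi_j)$ that appear when one applies $r_0r_2\cdots r_{2n}$; the bookkeeping of the overall monomial prefactors $c_{i,*}/c_{i,*+1}$ and $f_{i+1,*}$ then matches the monomial part $\varphi_i\alpha^{\pm 1}$ of the reflection formula and the $\pi$-shift.

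The main obstacle will be the combinatorial bookkeeping: matching the index shifts $n-i+2k \leftrightarrow$ (subscript of $\varphi$) across the dictionary, and checking that the powers $t_0^{\pm 1/(n+1)}$, $t_1^{\pm 1/(n+1)}$, $q^{\pm 1/(n+1)}$ and the $c$-monomials all cancel so that a genuinely $q$-rational (not merely algebraic) identity results. In particular one must be careful that the normalization $c_{i,n-i+2k}=\tau_3^{-k}(c_{2i})/q^{1/(n+1)}$ produces exactly the $q^{1/(n+1)}\gamma$ and $q^{1/(n+1)}$ factors sitting in the second family of \eqref{Eq:q-LUC}, and that the periodicity $\tau_3^m({\boldsymbol c})={\boldsymbol c}$ ($n=2m-1$) or $\tau_3^{n+1}({\boldsymbol c})={\boldsymbol c}$ ($n=2m$)—which itself follows from $\tau_3$ acting on each $\alpha_j$ by a monomial of total degree one and from $\prod\alpha_j=q$—is consistent with the index range $i,k=0,\ldots,n$. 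Because this is a direct but lengthy verification, I would carry it out in an appendix, reusing the intermediate reflection-by-reflection formulas for $\varphi_i$ that are already produced in the proof of Theorem \ref{Thm:Fund_Rel} in Appendix \ref{App:Prf_Fund_Rel}; once the dictionary is shown to intertwine $\tau_3$ with the shift $({\boldsymbol t},{\boldsymbol c})\mapsto(q{\boldsymbol t},\tau_3({\boldsymbol c}))$ on the $\varphi$'s, the system \eqref{Eq:q-LUC} holds by construction.
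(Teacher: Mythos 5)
Your proposal is correct and follows essentially the same route as the paper: the paper's proof consists precisely of computing the explicit rational formulas for $\tau_3(\varphi_{2i})$ and $\tau_3(\varphi_{2i+1})$ from the word $(r_0\,r_2\ldots r_{2n}\,\pi)^2$ (these are the displayed formulas immediately after the theorem, exhibiting exactly the factors $(1+\varphi_j)$ and $(\alpha_j+\varphi_j)$ you predict) and then verifying by direct substitution that the stated dictionary turns them into system \eqref{Eq:q-LUC}. Your additional check of the constraints below \eqref{Eq:q-LUC} is sound and is left implicit in the paper.
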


We can prove this theorem by a direct calculation with
\begin{align*}
	\tau_3(\varphi_{2i}) &= \frac{1}{\alpha_{2i+2}\,\alpha_{2i+3}}\frac{(1+\varphi_{2i+1})(\alpha_{2i+3}+\varphi_{2i+3})}{(1+\varphi_{2i+3})(\alpha_{2i+1}+\varphi_{2i+1})}\,\varphi_{2i+2}, \\
	\tau_3(\varphi_{2i+1}) &= \alpha_{2i+1}\,\alpha_{2i+2}\,\frac{\left\{1+\tau_3(\varphi_{2i+2})\right\}\left\{\frac{1}{\alpha_{2i+1}\,\alpha_{2i+2}\,\alpha_{2i+3}}+\tau_3(\varphi_{2i})\right\}}{\left\{1+\tau_3(\varphi_{2i})\right\}\left\{\frac{1}{\alpha_{2i+3}\,\alpha_{2i+4}\,\alpha_{2i+5}}+\tau_3(\varphi_{2i+2})\right\}}\,\varphi_{2i+3},
\end{align*}
for $i=0,\ldots,n$.

\begin{rem}
Thanks to Theorem \ref{Thm:q-FST}, we can rewrite the transformation $\tau_3$ as
\[
	\tau_3 = \tilde{r}_0\,\tilde{r}_2\ldots\tilde{r}_{2n}\,\tilde{r}_{2n+1}\,\tilde{r}_1\ldots\tilde{r}_{2n-1}\,\tilde{\pi}^2.
\]
Hence we can derive a Lax form for system \eqref{Eq:q-LUC} in a similar manner as Section \ref{Sec:q-Garnier}.
We don't give its detail here.
\end{rem}

\begin{rem}
System \eqref{Eq:q-LUC} is actually closed in $2n+2$ dependent variables $f_{i,n-i},g_{i,n-i-1}$ $(i=0,\ldots,n)$.
More precisely, each of $f_{i,n-i}(q^m{\boldsymbol t}),g_{i,n-i-1}(q^m{\boldsymbol t})$ (resp. $f_{i,n-i}(q^{n+1}{\boldsymbol t}),g_{i,n-i-1}(q^{n+1}{\boldsymbol t})$) is rational in dependent variables $f_{i,n-i}({\boldsymbol t}),g_{i,n-i-1}({\boldsymbol t})$ for $n=2m-1$ (resp. $n=2m$).
This phenomenon corresponds to the fact that $\tau_3^m$ (resp. $\tau_3^{n+1}$) is the translation for $n=2m-1$ (resp. $n=2m$).
\end{rem}

\appendix

\section{Actions of simple reflections $s_k,s'_k$}\label{App:Birat_Transf}

We investigate the action of the simple reflection $s_0$.
The actions of the other simple reflections can be obtained from $s_1=\pi^{-1}s_0\,\pi$ and $s'_k=\rho\,s_{k+1}\,\rho$.

Let us start with the quiver $Q$ given in Figure \ref{Fig:Gen_q-PVI}.
The corresponding skew-symmetric matrix $\Lambda=\left(\lambda_{i,j}\right)_{i,j}$ is given by
\begin{equation}\begin{split}\label{Eq:Quiver_Matrix}
	\Lambda &= X_{1,4n+3} - X_{1,4n+4} - X_{1,2n+3} + X_{1,2n+4} - X_{2,4n+3} + X_{2,4n+4} + X_{2,2n+3} - X_{2,2n+4} \\
	&\quad + \sum_{i=1}^{n}(X_{2i+1,2i+2n+1} - X_{2i+1,2i+2n+2} - X_{2i+1,2i+2n+3} + X_{2i+1,2i+2n+4}) \\
	&\quad + \sum_{i=1}^{n}(-X_{2i+2,2i+2n+1} + X_{2i+2,2i+2n+2} + X_{2i+2,2i+2n+3} - X_{2i+2,2i+2n+4}),
\end{split}\end{equation}
where $X_{i,j}=E_{i,j}-E_{j,i}$ and $E_{i,j}$ is a $(4n+4)\times(4n+4)$-matrix with $1$ in the $(i,j)$-th entry and zeros elsewhere.
We take iterative mutations $\mu_{2n+1}\,\mu_{4n+2}\,\mu_{2n-1}\ldots\mu_{2n+6}\,\mu_3\,\mu_{2n+4}\,\mu_1$.
In each step the coefficients and the skew-symmetric matrix are transformed as
\[\begin{array}{rcl}
	\left(y_1,\ldots,y_{4n+4},\left(\lambda_{i,j}\right)_{i,j}\right) & \xrightarrow{\mu_1} & \left(y^{(1)}_1,\ldots,y^{(1)}_{4n+4},\left(\lambda^{(1)}_{i,j}\right)_{i,j}\right) \\[4pt]
	& \xrightarrow{\mu_{2n+4}} & \left(y^{(2n+4)}_1,\ldots,y^{(2n+4)}_{4n+4},\left(\lambda^{(2n+4)}_{i,j}\right)_{i,j}\right) \\[4pt]
	& \vdots \\[4pt]
	& \xrightarrow{\mu_{2n+1}} & \left(y^{(2n+1)}_1,\ldots,y^{(2n+1)}_{4n+4},\left(\lambda^{(2n+1)}_{i,j}\right)_{i,j}\right).
\end{array}\]
The matrix $\left(\lambda^{(1)}_{i,j}\right)_{i,j}$ is given by
\begin{align*}
	&\lambda^{(1)}_{1,2n+3} = 1,\quad
	\lambda^{(1)}_{1,2n+4} = -1,\quad
	\lambda^{(1)}_{1,4n+3} = -1,\quad
	\lambda^{(1)}_{1,4n+4} = 1, \\
	&\lambda^{(1)}_{2n+3,2n+4} = 1,\quad
	\lambda^{(1)}_{2n+3,4n+3} = 1,\quad
	\lambda^{(1)}_{2n+4,4n+4} = -1,\quad
	\lambda^{(1)}_{4n+3,4n+4} = -1,
\end{align*}
where $\lambda^{(1)}_{1,2n+3}=1$ stands for 
\[
	\lambda^{(1)}_{1,2n+3} = 1,\quad \lambda^{(1)}_{2n+3,1} = -1.
\]
Here we write only entries of the matrix which are changed by the mutation $\mu_1$.
In a similar manner, the matrices $\left(\lambda^{(k)}_{i,j}\right)_{i,j}$ are given by
\begin{align*}
	&\lambda^{(2n+4)}_{2n+4,1} = -1,\quad
	\lambda^{(2n+4)}_{2n+4,2} = -1,\quad
	\lambda^{(2n+4)}_{2n+4,3} = -1,\quad
	\lambda^{(2n+4)}_{2n+4,4} = 1,\quad
	\lambda^{(2n+4)}_{2n+4,2n+3} = 1,\quad
	\lambda^{(2n+4)}_{2n+4,4n+4} = 1, \\
	&\lambda^{(2n+4)}_{j,4} = -1,\quad
	\lambda^{(2n+4)}_{j,2n+3} = 0\quad (j=1,2,3),\quad
	\lambda^{(2n+4)}_{1,4n+4} = 0,\quad
	\lambda^{(2n+4)}_{2,4n+4} = 0,\quad
	\lambda^{(2n+4)}_{3,4n+4} = -1,
\end{align*}
for $k=2n+4$,
\begin{align*}
	&\lambda^{(2i+1)}_{2i+1,2i+2} = 1,\quad
	\lambda^{(2i+1)}_{2i+1,2i+2n+2} = -1,\quad
	\lambda^{(2i+1)}_{2i+1,2i+2n+3} = 1,\quad
	\lambda^{(2i+1)}_{2i+1,2i+2n+4} = -1,\quad
	\lambda^{(2i+1)}_{2i+1,4n+4} = 1, \\
	&\lambda^{(2i+1)}_{j,2i+2} = 0,\quad
	\lambda^{(2i+1)}_{j,2i+2n+3} = -1\quad (j=2i+2n+2,2i+2n+4), \\
	&\lambda^{(2i+1)}_{2i+2n+2,4n+4} = 0,\quad
	\lambda^{(2i+1)}_{2i+2n+4,4n+4} = -1,
\end{align*}
for $k=2i+1$ with $i=1,\ldots,n-1$,
\begin{align*}
	&\lambda^{(2i+2n+4)}_{2i+2n+4,2i+1} = -1,\quad
	\lambda^{(2i+2n+4)}_{2i+2n+4,2i+3} = -1,\quad
	\lambda^{(2i+2n+4)}_{2i+2n+4,2i+4} = 1,\quad
	\lambda^{(2i+2n+4)}_{2i+2n+4,2i+2n+3} = 1, \\
	&\lambda^{(2i+2n+4)}_{2i+2n+4,4n+4} = 1, \\
	&\lambda^{(2i+2n+4)}_{j,2i+4} = -1,\quad
	\lambda^{(2i+2n+4)}_{j,2i+2n+3} = 0\quad (j=2i+1,2i+3),\quad
	\lambda^{(2i+2n+4)}_{2i+1,4n+4} = 0,\quad
	\lambda^{(2i+2n+4)}_{2i+3,4n+4} = \delta_{i,n-1} - 1,
\end{align*}
for $k=2i+2n+4$ with $i=1,\ldots,n-1$ and
\[
	\lambda^{(2n+1)}_{2n+1,2n+2} = 1,\quad
	\lambda^{(2n+1)}_{2n+1,4n+2} = -1,\quad
	\lambda^{(2n+1)}_{2n+1,4n+3} = 1,\quad
	\lambda^{(2n+1)}_{2n+2,4n+2} = 0,\quad
	\lambda^{(2n+1)}_{4n+2,4n+3} = -1,
\]
for $k=2n+1$.
The coefficients $y^{(1)}_1,\ldots,y^{(1)}_{4n+4}$ are given by
\[
	y^{(1)}_1 = \frac{1}{y_1},\quad
	y^{(1)}_{2n+3} = y_{2n+3}\,(1+y_1),\quad
	y^{(1)}_{2n+4} = \frac{y_{2n+4}}{1+\frac{1}{y_1}},\quad
	y^{(1)}_{4n+3} = \frac{y_{4n+3}}{1+\frac{1}{y_1}},\quad
	y^{(1)}_{4n+4} = y_{4n+4}\,(1+y_1).
\]
Here we write only coefficients which are changed by the mutation $\mu_1$.
In a similar manner, the coefficients $y^{(k)}_1,\ldots,y^{(k)}_{4n+4}$ are given by
\begin{align*}
	&y^{(2n+4)}_1 = \frac{y^{(1)}_1}{1+\frac{1}{y^{(1)}_{2n+4}}},\quad
	y^{(2n+4)}_2 = \frac{y^{(1)}_2}{1+\frac{1}{y^{(1)}_{2n+4}}},\quad
	y^{(2n+4)}_3 = \frac{y^{(1)}_3}{1+\frac{1}{y^{(1)}_{2n+4}}},\quad
	y^{(2n+4)}_4 = y^{(1)}_4\,(1+y^{(1)}_{2n+4}), \\
	&y^{(2n+4)}_{2n+3} = y^{(1)}_{2n+3}\,(1+y^{(1)}_{2n+4}),\quad
	y^{(2n+4)}_{2n+4} = \frac{1}{y^{(1)}_{2n+4}},\quad
	y^{(2n+4)}_{4n+4} = y^{(1)}_{4n+4}\,(1+y^{(1)}_{2n+4}),
\end{align*}
for $k=2n+4$,
\begin{align*}
	&y^{(2i+1)}_{2i+1} = \frac{1}{y^{(2i+2n+2)}_{2i+1}},\quad
	y^{(2i+1)}_{2i+2} = y^{(2i+2n+2)}_{2i+2}\,(1+y^{(2i+2n+2)}_{2i+1}),\quad
	y^{(2i+1)}_{2i+2n+2} = \frac{y^{(2i+2n+2)}_{2i+2n+2}}{1+\frac{1}{y^{(2i+2n+2)}_{2i+1}}}, \\
	&y^{(2i+1)}_{2i+2n+3} = y^{(2i+2n+2)}_{2i+2n+3}\,(1+y^{(2i+2n+2)}_{2i+1}),\quad
	y^{(2i+1)}_{2i+2n+4} = \frac{y^{(2i+2n+2)}_{2i+2n+4}}{1+\frac{1}{y^{(2i+2n+2)}_{2i+1}}},\quad
	y^{(2i+1)}_{4n+4} = y^{(2i+2n+2)}_{4n+4}\,(1+y^{(2i+2n+2)}_{2i+1}),
\end{align*}
for $k=2i+1$ with $i=1,\ldots,n-1$,
\begin{align*}
	&y^{(2i+2n+4)}_{2i+1} = \frac{y^{(2i+1)}_{2i+1}}{1+\frac{1}{y^{(2i+1)}_{2i+2n+4}}},\quad
	y^{(2i+2n+4)}_{2i+3} = \frac{y^{(2i+1)}_{2i+3}}{1+\frac{1}{y^{(2i+1)}_{2i+2n+4}}},\quad
	y^{(2i+2n+4)}_{2i+4} = y^{(2i+1)}_{2i+4}\,(1+y^{(2i+1)}_{2i+2n+4}), \\
	&y^{(2i+2n+4)}_{2i+2n+3} = y^{(2i+1)}_{2i+2n+3}\,(1+y^{(2i+1)}_{2i+2n+4}),\quad
	y^{(2i+2n+4)}_{2i+2n+4} = \frac{1}{y^{(2i+1)}_{2i+2n+4}},\quad
	y^{(2i+2n+4)}_{4n+4} = y^{(2i+1)}_{4n+4}\,(1+y^{(2i+1)}_{2i+2n+4}),
\end{align*}
for $k=2i+2n+4$ with $i=1,\ldots,n-1$ and
\begin{align*}
	&y^{(2n+1)}_{2n+1} = \frac{1}{y^{(4n+2)}_{2n+1}},\quad
	y^{(2n+1)}_{2n+2} = y^{(4n+2)}_{2n+2}\,(1+y^{(4n+2)}_{2n+1}),\quad
	y^{(2n+1)}_{4n+2} = \frac{y^{(4n+2)}_{4n+2}}{1+\frac{1}{y^{(4n+2)}_{2n+1}}}, \\
	&y^{(2n+1)}_{4n+3} = y^{(4n+2)}_{4n+3}\,(1+y^{(4n+2)}_{2n+1}),
\end{align*}
for $k=2n+1$.

After taking iterative mutations $\mu_{2n+1}\ldots\mu_1$ and a permutation $(2n+1,4n+4)$, we have
\[
	\left(y_1,\ldots,y_{4n+4},\left(\lambda_{i,j}\right)_{i,j}\right) \xrightarrow{(2n+1,4n+4)\,\mu_{2n+1}\ldots\mu_1} \left(\hat{y}_1,\ldots,\hat{y}_{4n+4},\left(\hat{\lambda}_{i,j}\right)_{i,j}\right),
\]
where
\begin{align*}
	&\hat{y}_1 = \frac{y_{2n+4}}{1+y_1+y_1\,y_{2n+4}},\quad
	\hat{y}_2 = \frac{y_1\,y_2\,y_{2n+4}}{1+y_1+y_1\,y_{2n+4}}, \\
	&\hat{y}_{2i+1} = y_{2i+2n+4}\,\frac{\sum_{j=0}^{i-1}\left(\prod_{k=0}^{j-1}y_{2k+1}\,y_{2k+2n+4}\right)(1+y_{2j+1})+\prod_{k=0}^{i-1}y_{2k+1}\,y_{2k+2n+4}}{\sum_{j=0}^{i}\left(\prod_{k=0}^{j-1}y_{2k+1}\,y_{2k+2n+4}\right)(1+y_{2j+1})+\prod_{k=0}^{i}y_{2k+1}\,y_{2k+2n+4}}\quad (i=1,\ldots,n-1), \\
	&\hat{y}_{2i+2} = y_{2i+2}\,\frac{\sum_{j=0}^{i}\left(\prod_{k=0}^{j-1}y_{2k+1}\,y_{2k+2n+4}\right)(1+y_{2j+1})}{\sum_{j=0}^{i-1}\left(\prod_{k=0}^{j-1}y_{2k+1}\,y_{2k+2n+4}\right)(1+y_{2j+1})}\quad (i=1,\ldots,n), \\
	&\hat{y}_{2n+1} = y_{4n+4}\sum_{j=0}^{n-1}\left(\prod_{k=0}^{j-1}y_{2k+1}\,y_{2k+2n+4}\right)(1+y_{2j+1}) + \prod_{k=0}^{n-1}y_{2k+1}\,y_{2k+2n+4}, \\
	&\hat{y}_{2i+2n+1} = y_{2i+2n+1}\,\frac{\sum_{j=0}^{i-1}\left(\prod_{k=0}^{j-1}y_{2k+1}\,y_{2k+2n+4}\right)(1+y_{2j+1})+\prod_{k=0}^{i-1}y_{2k+1}\,y_{2k+2n+4}}{\sum_{j=0}^{i-2}\left(\prod_{k=0}^{j-1}y_{2k+1}\,y_{2k+2n+4}\right)(1+y_{2j+1})+\prod_{k=0}^{i-2}y_{2k+1}\,y_{2k+2n+4}}\quad (i=1,\ldots,n), \\
	&\hat{y}_{2i+2n+2} = y_{2i+1}\,\frac{\sum_{j=0}^{i-1}\left(\prod_{k=0}^{j-1}y_{2k+1}\,y_{2k+2n+4}\right)(1+y_{2j+1})}{\sum_{j=0}^{i}\left(\prod_{k=0}^{j-1}y_{2k+1}\,y_{2k+2n+4}\right)(1+y_{2j+1})}\quad (i=1,\ldots,n), \\
	&\hat{y}_{4n+3} = \frac{y_1\,y_{4n+3}}{1+y_1}\frac{\sum_{j=0}^{n}\left(\prod_{k=0}^{j-1}y_{2k+1}\,y_{2k+2n+4}\right)(1+y_{2j+1})}{\sum_{j=0}^{n-1}\left(\prod_{k=0}^{j-1}y_{2k+1}\,y_{2k+2n+4}\right)(1+y_{2j+1})+\prod_{k=0}^{n-1}y_{2k+1}\,y_{2k+2n+4}}, \\
	&\hat{y}_{4n+4} = \frac{\sum_{j=0}^{n-1}\left(\prod_{k=0}^{j-1}y_{2k+1}\,y_{2k+2n+4}\right)(1+y_{2j+1})+\prod_{k=0}^{n-1}y_{2k+1}\,y_{2k+2n+4}}{\left(\prod_{k=0}^{n-1}y_{2k+1}\,y_{2k+2n+4}\right)y_{2n+1}}, \\
\end{align*}
and
\begin{align*}
	\left(\hat{\lambda}_{i,j}\right)_{i,j}
	&= -X_{1,4} + X_{1,2n+4} - X_{1,4n+3} - X_{2,4} + X_{2,2n+4} - X_{2,4n+3} \\
	&\quad + \sum_{i=1}^{n-1}(X_{2i+1,2i+2}-X_{2i+1,2i+6}-X_{2i+1,2i+2n+2}+X_{2i+1,2i+2n+4}-X_{2i+2,2i+2n+1}+X_{2i+2,2i+2n+3}) \\
	&\quad + X_{2n+1,2n+2} - X_{2n+1,4n+2} + X_{2n+1,4n+3} - X_{2n+2,4n+1} + X_{2n+2,4n+3} - X_{2n+2,4n+4} \\
	&\quad - X_{2n+3,2n+4} + X_{2n+3,4n+3} - \sum_{i=1}^{n-1}(X_{2i+2n+2,2i+2n+3}+X_{2i+2n+3,2i+2n+4}) \\
	&\quad - X_{4n+2,4n+3} + X_{4n+2,4n+4} - X_{4n+3,4n+4}.
\end{align*}

We next take iterative mutations $\mu_1\,\mu_{2n+4}\,\mu_3\,\mu_{2n+6}\ldots\mu_{2n-1}\,\mu_{4n+2}\,\mu_{2n+1}$.
In each step the coefficients and the skew-symmetric matrix are transformed as
\[\begin{array}{rcl}
	\left(\hat{y}_1,\ldots,\hat{y}_{4n+4},\left(\hat{\lambda}_{i,j}\right)_{i,j}\right) & \xrightarrow{\mu_{2n+1}} & \left(\hat{y}^{(2n+1)}_1,\ldots,\hat{y}^{(2n+1)}_{4n+4},\left(\hat{\lambda}^{(2n+1)}_{i,j}\right)_{i,j}\right) \\[4pt]
	& \xrightarrow{\mu_{4n+2}} & \left(\hat{y}^{(4n+2)}_1,\ldots,\hat{y}^{(4n+2)}_{4n+4},\left(\hat{\lambda}^{(4n+2)}_{i,j}\right)_{i,j}\right) \\[4pt]
	& \vdots \\[4pt]
	& \xrightarrow{\mu_1} & \left(\hat{y}^{(1)}_1,\ldots,\hat{y}^{(1)}_{4n+4},\left(\hat{\lambda}^{(1)}_{i,j}\right)_{i,j}\right).
\end{array}\]
In a similar manner as above, the matrices $\left(\lambda^{(k)}_{i,j}\right)_{i,j}$ are given by
\[
	\hat{\lambda}^{(2n+1)}_{2n+1,2n+2} = -1,\quad
	\hat{\lambda}^{(2n+1)}_{2n+1,4n+2} = 1,\quad
	\hat{\lambda}^{(2n+1)}_{2n+1,4n+3} = -1,\quad
	\hat{\lambda}^{(2n+1)}_{2n+2,4n+2} = -1,\quad
	\hat{\lambda}^{(2n+1)}_{4n+2,4n+3} = 0,
\]
for $k=2n+1$,
\begin{align*}
	&\hat{\lambda}^{(2i+2n+4)}_{2i+2n+4,2i+1} = 1,\quad
	\hat{\lambda}^{(2i+2n+4)}_{2i+2n+4,2i+3} = 1,\quad
	\hat{\lambda}^{(2i+2n+4)}_{2i+2n+4,2i+4} = -1,\quad
	\hat{\lambda}^{(2i+2n+4)}_{2i+2n+4,2i+2n+3} = -1, \\
	&\hat{\lambda}^{(2i+2n+4)}_{2i+2n+4,4n+4} = -1, \\
	&\hat{\lambda}^{(2i+2n+4)}_{j,2i+4} = 0,\quad
	\hat{\lambda}^{(2i+2n+4)}_{j,2i+2n+3} = 1\quad (j=2i+1,2i+3),\quad
	\hat{\lambda}^{(2i+2n+4)}_{2i+1,4n+4} = 1,\quad
	\hat{\lambda}^{(2i+2n+4)}_{2i+3,4n+4} = \delta_{i,n-1},
\end{align*}
for $k=2i+2n+4$ with $i=n-1,\ldots,1$,
\begin{align*}
	&\hat{\lambda}^{(2i+1)}_{2i+1,2i+2} = -1,\quad
	\hat{\lambda}^{(2i+1)}_{2i+1,2i+2n+2} = 1,\quad
	\hat{\lambda}^{(2i+1)}_{2i+1,2i+2n+3} = -1,\quad
	\hat{\lambda}^{(2i+1)}_{2i+1,2i+2n+4} = 1,\quad
	\hat{\lambda}^{(2i+1)}_{2i+1,4n+4} = -1, \\
	&\hat{\lambda}^{(2i+1)}_{j,2i+2} = 1,\quad
	\hat{\lambda}^{(2i+1)}_{j,2i+2n+3} = 0\quad (j=2i+2n+2,2i+2n+4), \\
	&\hat{\lambda}^{(2i+1)}_{2i+2n+2,4n+4} = 1,\quad
	\hat{\lambda}^{(2i+1)}_{2i+2n+4,4n+4} = 0,
\end{align*}
for $k=2i+1$ with $i=n-1,\ldots,1$,
\begin{align*}
	&\hat{\lambda}^{(2n+4)}_{2n+4,1} = 1,\quad
	\hat{\lambda}^{(2n+4)}_{2n+4,2} = 1,\quad
	\hat{\lambda}^{(2n+4)}_{2n+4,3} = 1,\quad
	\hat{\lambda}^{(2n+4)}_{2n+4,4} = -1,\quad
	\hat{\lambda}^{(2n+4)}_{2n+4,2n+3} = -1,\quad
	\hat{\lambda}^{(2n+4)}_{2n+4,4n+4} = -1, \\
	&\hat{\lambda}^{(2n+4)}_{j,4} = 0,\quad
	\hat{\lambda}^{(2n+4)}_{j,2n+3} = 1\quad (j=1,2,3),\quad
	\hat{\lambda}^{(2n+4)}_{1,4n+4} = 1,\quad
	\hat{\lambda}^{(2n+4)}_{2,4n+4} = 1,\quad
	\hat{\lambda}^{(2n+4)}_{3,4n+4} = 0,
\end{align*}
for $k=2n+4$ and
\begin{align*}
	&\hat{\lambda}^{(1)}_{1,2n+3} = -1,\quad
	\hat{\lambda}^{(1)}_{1,2n+4} = 1,\quad
	\hat{\lambda}^{(1)}_{1,4n+3} = 1,\quad
	\hat{\lambda}^{(1)}_{1,4n+4} = -1, \\
	&\hat{\lambda}^{(1)}_{2n+3,2n+4} = 0,\quad
	\hat{\lambda}^{(1)}_{2n+3,4n+3} = 0,\quad
	\hat{\lambda}^{(1)}_{2n+4,4n+4} = 0,\quad
	\hat{\lambda}^{(1)}_{4n+3,4n+4} = 0,
\end{align*}
for $k=1$.
The coefficients $\hat{y}^{(k)}_1,\ldots,\hat{y}^{(k)}_{4n+4}$ are given by
\[
	\hat{y}^{(2n+1)}_{2n+1} = \frac{1}{\hat{y}_{2n+1}},\quad
	\hat{y}^{(2n+1)}_{2n+2} = \frac{\hat{y}_{2n+2}}{1+\frac{1}{\hat{y}_{2n+1}}},\quad
	\hat{y}^{(2n+1)}_{4n+2} = \hat{y}_{4n+2}\,(1+\hat{y}_{2n+1}),\quad
	\hat{y}^{(2n+1)}_{4n+3} = \frac{\hat{y}_{4n+3}}{1+\frac{1}{\hat{y}_{2n+1}}},
\]
for $k=2n+1$,
\begin{align*}
	&\hat{y}^{(2i+2n+4)}_{2i+1} = \hat{y}^{(2i+3)}_{2i+1}\,(1+\hat{y}^{(2i+3)}_{2i+2n+4}),\quad
	\hat{y}^{(2i+2n+4)}_{2i+3} = \hat{y}^{(2i+3)}_{2i+3}\,(1+\hat{y}^{(2i+3)}_{2i+2n+4}),\quad
	\hat{y}^{(2i+2n+4)}_{2i+4} = \frac{\hat{y}^{(2i+3)}_{2i+4}}{1+\frac{1}{\hat{y}^{(2i+3)}_{2i+2n+4}}}, \\
	&\hat{y}^{(2i+2n+4)}_{2i+2n+3} = \frac{\hat{y}^{(2i+3)}_{2i+2n+3}}{1+\frac{1}{\hat{y}^{(2i+3)}_{2i+2n+4}}},\quad
	\hat{y}^{(2i+2n+4)}_{2i+2n+4} = \frac{1}{\hat{y}^{(2i+3)}_{2i+2n+4}},\quad
	\hat{y}^{(2i+2n+4)}_{4n+4} = \frac{\hat{y}^{(2i+3)}_{4n+4}}{1+\frac{1}{\hat{y}^{(2i+3)}_{2i+2n+4}}},
\end{align*}
for $k=2i+2n+4$ with $i=n-1,\ldots,1$,
\begin{align*}
	&\hat{y}^{(2i+1)}_{2i+1} = \frac{1}{\hat{y}^{(2i+2n+4)}_{2i+1}},\quad
	\hat{y}^{(2i+1)}_{2i+2} = \frac{\hat{y}^{(2i+2n+4)}_{2i+2}}{1+\frac{1}{\hat{y}^{(2i+2n+4)}_{2i+1}}},\quad
	\hat{y}^{(2i+1)}_{2i+2n+2} = \hat{y}^{(2i+2n+4)}_{2i+2n+2}\,(1+\hat{y}^{(2i+2n+4)}_{2i+1}), \\
	&\hat{y}^{(2i+1)}_{2i+2n+3} = \frac{\hat{y}^{(2i+2n+4)}_{2i+2n+3}}{1+\frac{1}{\hat{y}^{(2i+2n+4)}_{2i+1}}},\quad
	\hat{y}^{(2i+1)}_{2i+2n+4} = \hat{y}^{(2i+2n+4)}_{2i+2n+4}\,(1+\hat{y}^{(2i+2n+4)}_{2i+1}),\quad
	\hat{y}^{(2i+1)}_{4n+4} = \frac{\hat{y}^{(2i+2n+4)}_{4n+4}}{1+\frac{1}{\hat{y}^{(2i+2n+4)}_{2i+1}}},
\end{align*}
for $k=2i+1$ with $i=n-1,\ldots,1$,
\begin{align*}
	&\hat{y}^{(2n+4)}_1 = \hat{y}^{(3)}_1\,(1+\hat{y}^{(3)}_{2n+4}),\quad
	\hat{y}^{(2n+4)}_2 = \hat{y}^{(3)}_2\,(1+\hat{y}^{(3)}_{2n+4}),\quad
	\hat{y}^{(2n+4)}_3 = \hat{y}^{(3)}_3\,(1+\hat{y}^{(3)}_{2n+4}), \\
	&\hat{y}^{(2n+4)}_4 = \frac{\hat{y}^{(3)}_4}{1+\frac{1}{\hat{y}^{(3)}_{2n+4}}},\quad
	\hat{y}^{(2n+4)}_{2n+3} = \frac{\hat{y}^{(3)}_{2n+3}}{1+\frac{1}{\hat{y}^{(3)}_{2n+4}}},\quad
	\hat{y}^{(2n+4)}_{2n+4} = \frac{1}{\hat{y}^{(3)}_{2n+4}},\quad
	\hat{y}^{(2n+4)}_{4n+4} = \frac{\hat{y}^{(3)}_{4n+4}}{1+\frac{1}{\hat{y}^{(3)}_{2n+4}}},
\end{align*}
for $k=2n+4$ and
\begin{align*}
	&\hat{y}^{(1)}_1 = \frac{1}{\hat{y}^{(2n+4)}_1},\quad
	\hat{y}^{(1)}_{2n+3} = \frac{\hat{y}^{(2n+4)}_{2n+3}}{1+\frac{1}{\hat{y}^{(2n+4)}_1}},\quad
	\hat{y}^{(1)}_{2n+4} = \hat{y}^{(2n+4)}_{2n+4}\,(1+\hat{y}^{(2n+4)}_1),\quad
	\hat{y}^{(1)}_{4n+3} = \hat{y}^{(2n+4)}_{4n+3}\,(1+\hat{y}^{(2n+4)}_1), \\
	&\hat{y}^{(1)}_{4n+4} = \frac{\hat{y}^{(2n+4)}_{4n+4}}{1+\frac{1}{\hat{y}^{(2n+4)}_1}},
\end{align*}
for $k=1$.

After taking iterative mutations $\mu_1\ldots\mu_{2n+1}$, the matrix $\left(\hat{\lambda}^{(1)}_{i,j}\right)_{i,j}$ is equivalent to $\Lambda$ defined by \eqref{Eq:Quiver_Matrix}.
Moreover, we have
\begin{equation}\begin{split}\label{Proof:s0_Transf}
	\hat{y}^{(1)}_{2i+1} &= \frac{\sum_{j=0}^{n}\left(\prod_{k=0}^{j-1}y_{2i+2k+1}\,y_{2i+2k+2n+4}\right)\left(1+y_{2i+2j+1}\right)}{y_{2i+2n+4}\sum_{j=0}^{n}\left(\prod_{k=0}^{j-1}y_{2i+2k+3}\,y_{2i+2k+2n+6}\right)\left(1+y_{2i+2j+3}\right)}, \\
	\hat{y}^{(1)}_{2i+1}\,\hat{y}^{(1)}_{2i+2} &= y_{2i+1}\,y_{2i+2}, \\
	\hat{y}^{(1)}_{2i+2n+4} &= \frac{\sum_{j=0}^{n}\left(\prod_{k=0}^{j-1}y_{2i+2k+2n+4}\,y_{2i+2k+3}\right)\left(1+y_{2i+2j+2n+4}\right)}{y_{2i+3}\sum_{j=0}^{n}\left(\prod_{k=0}^{j-1}y_{2i+2k+2n+6}\,y_{2i+2k+5}\right)\left(1+y_{2i+2j+2n+6}\right)}, \\
	\hat{y}^{(1)}_{2i+2n+3}\,\hat{y}^{(1)}_{2i+2n+4} &= y_{2i+2n+3}\,y_{2i+2n+4},
\end{split}\end{equation}
for $i=0,\ldots,n$.
Here we assume that the indices of $y_i$ are congruent modulo $4n+4$.
Then, substituting
\[
	y_{2i+1} = \varphi_{2i},\quad
	y_{2i+2} = \frac{\alpha_{2i}}{\varphi_{2i}},\quad
	y_{2i+2n+3} = \varphi_{2i+1},\quad
	y_{2i+2n+4} = \frac{\alpha_{2i+1}}{\varphi_{2i+1}}\quad (i=0,\ldots,n),
\]
to \eqref{Proof:s0_Transf}, we obtain the action of $s_0$ given by \eqref{Eq:s0s1_Transf_1} and \eqref{Eq:s0s1_Transf_2}.

\section{Proof of Theorem \ref{Thm:Fund_Rel}}\label{App:Prf_Fund_Rel}

We have to verify that the fundamental relations
\begin{align}
	&r_i^2 = 1\quad (i\in\mathbb{Z}_{2n+2}), \label{Eq:Fund_Rel_1} \\
	&r_i\,r_j\,r_i = r_j\,r_i\,r_j\quad (i,j\in\mathbb{Z}_{2n+2},\ a_{i,j}=-1), \label{Eq:Fund_Rel_2} \\
	&r_i\,r_j = r_j\,r_i\quad (i,j\in\mathbb{Z}_{2n+2},\ a_{i,j}=0), \label{Eq:Fund_Rel_3} \\
	&s_k^2 = 1,\quad
	(s'_k)^2 = 1\quad (k\in\mathbb{Z}_2), \label{Eq:Fund_Rel_4}
\end{align}
and the commutative relations
\begin{align}
	&r_i\,s_k = s_k\,r_i,\quad
	r_i\,s'_k = s'_k\,r_i\quad (i\in\mathbb{Z}_{2n+2},\ k\in\mathbb{Z}_2), \label{Eq:Fund_Rel_5} \\
	&s_k\,s'_l = s'_l\,s_k\quad (k,l\in\mathbb{Z}_2), \label{Eq:Fund_Rel_6}
\end{align}
are satisfied.
Among them, relations \eqref{Eq:Fund_Rel_1}, \eqref{Eq:Fund_Rel_2} and \eqref{Eq:Fund_Rel_3} have been already shown in \cite{KNY1,KNY2}.
Moreover, we can show relations \eqref{Eq:Fund_Rel_4} and \eqref{Eq:Fund_Rel_5} by direct calculations with relation \eqref{Eq:Fund_Rel_Quiver}.
Hence the remaining problem is relation \eqref{Eq:Fund_Rel_6}.
We show only the relation $s_1\,s'_1=s'_1\,s_1$ because the other relations can be obtained from $s_0=\pi^{-1}s_1\,\pi$ and $s'_0=(\pi')^{-1}s'_1\,\pi'$.
Since the actions of $s_1\,s'_1$ and $s'_1\,s_1$ on the parameters are obvious, we investigate their actions on the dependent variables.

Recall that the simple reflections $s_1,s'_1$ act on the dependent variables as
\begin{align*}
	&s_1(\varphi_{2i}) = \alpha_{2i}\,\varphi_{2i+1}\frac{S_{2i+2}}{S_{2i}},\quad
	s_1(\varphi_{2i+1}) = \frac{\varphi_{2i+2}}{\alpha_{2i+2}}\frac{S_{2i+1}}{S_{2i+3}}, \\
	&s'_1(\varphi_{2i}) = \frac{\alpha_{2i}\,\alpha_{2i+1}}{\varphi_{2i+1}}\frac{S'_{2i}}{S'_{2i+2}},\quad
	s'_1(\varphi_{2i+1}) = \frac{\alpha_{2i+1}\,\alpha_{2i+2}}{\varphi_{2i+2}}\frac{S'_{2i+1}}{S'_{2i+3}},
\end{align*}
for $i=0,\ldots,n$, where
\begin{align*}
	&S_{2i} = \sum_{j=0}^{n}\left(\prod_{k=0}^{j-1}\frac{\alpha_{2i+2k}}{\varphi_{2i+2k}}\varphi_{2i+2k+1}\right)\left(1+\frac{\alpha_{2i+2j}}{\varphi_{2i+2j}}\right),\quad
	S_{2i+1} = \sum_{j=0}^{n}\left(\prod_{k=0}^{j-1}\varphi_{2i+2k+1}\frac{\alpha_{2i+2k+2}}{\varphi_{2i+2k+2}}\right)(1+\varphi_{2i+2j+1}), \\
	&S'_{2i} = \sum_{j=0}^{n}\left(\prod_{k=0}^{j-1}\frac{\varphi_{2i+2k}}{\alpha_{2i+2k}}\frac{\varphi_{2i+2k+1}}{\alpha_{2i+2k+1}}\right)\left(1+\frac{\varphi_{2i+2j}}{\alpha_{2i+2j}}\right),\quad
	S'_{2i+1} = \sum_{j=0}^{n}\left(\prod_{k=0}^{j-1}\frac{\varphi_{2i+2k+1}}{\alpha_{2i+2k+1}}\frac{\varphi_{2i+2k+2}}{\alpha_{2i+2k+2}}\right)\left(1+\frac{\varphi_{2i+2j+1}}{\alpha_{2i+2j+1}}\right),
\end{align*}
for $i=0,\ldots,n$.
We assume that the indices of $S_i,S'_i$ are congruent modulo $2n+2$ as well as $\alpha_i,\varphi_i$ are.

\begin{lem}
Equations
\begin{align}
	S_{2i} + \varphi_{2i+1}\,S_{2i+2} &= \left(1+\frac{\alpha_{2i}}{\varphi_{2i}}\right)S_{2i+1}, \label{Eq:Action_s'1s1_Lem_1} \\
	S'_{2i} + \frac{\varphi_{2i+1}}{\alpha_{2i+1}}\,S'_{2i+2} &= \left(1+\frac{\varphi_{2i}}{\alpha_{2i}}\right)S'_{2i+1}, \label{Eq:Action_s'1s1_Lem_2} \\
	S_{2i} - \frac{\alpha_{2i}}{\varphi_{2i}}\,S_{2i+1} &= 1 - \frac{q}{\beta_0}, \label{Eq:Action_s'1s1_Lem_3} \\
	S'_{2i} - \frac{\varphi_{2i}}{\alpha_{2i}}\,S'_{2i+1} &= 1 - \frac{\beta'_0}{q}, \label{Eq:Action_s'1s1_Lem_4}
\end{align}
are satisfied for $i=0,\ldots,n$.
\end{lem}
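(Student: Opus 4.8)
The four identities are statements about the sums $S_m,S'_m$, so the plan is to put all of them into a uniform shape that telescopes. For $m\in\mathbb{Z}_{2n+2}$ introduce $c_m$ by $c_{2i}=\alpha_{2i}/\varphi_{2i}$ and $c_{2i+1}=\varphi_{2i+1}$; since $2n+2$ is even, the parity of an index is well defined modulo $2n+2$, so $c_m$ is well defined cyclically. For $l\geq0$ set $P^{(l)}_m=c_m\,c_{m+1}\cdots c_{m+l-1}$ with $P^{(0)}_m=1$. Using $P^{(2j)}_m\bigl(1+c_{m+2j}\bigr)=P^{(2j)}_m+P^{(2j+1)}_m$, the defining formulas for $S_{2i}$ and $S_{2i+1}$ both collapse to
\[ S_m=\sum_{l=0}^{2n+1}P^{(l)}_m\qquad(m\in\mathbb{Z}_{2n+2}). \]
In exactly the same way, with $c'_m=\varphi_m/\alpha_m$ and $(P')^{(l)}_m=c'_m\cdots c'_{m+l-1}$, one gets $S'_m=\sum_{l=0}^{2n+1}(P')^{(l)}_m$. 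This rewriting is the conceptual heart of the proof; everything after it is bookkeeping.

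\textbf{Key inputs.} Next I would record the two "full cyclic product" identities, obtained directly from the definitions of the parameters together with $\alpha_0\cdots\alpha_{2n+1}=q$, $\beta_0=\prod_{i=0}^{n}\varphi_{2i}\alpha_{2i+1}/\varphi_{2i+1}$ and $\beta'_0=\prod_{i=0}^{n}\varphi_{2i}\varphi_{2i+1}$, namely $\prod_{m\in\mathbb{Z}_{2n+2}}c_m=\prod_{i=0}^{n}\frac{\alpha_{2i}}{\varphi_{2i}}\varphi_{2i+1}=\frac{q}{\beta_0}$ and $\prod_{m\in\mathbb{Z}_{2n+2}}c'_m=\prod_{i=0}^{n}\frac{\varphi_{2i}\varphi_{2i+1}}{\alpha_{2i}\alpha_{2i+1}}=\frac{\beta'_0}{q}$. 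A product of $2n+2$ consecutive $c_m$'s runs once around the cycle irrespective of its starting index, so $P^{(2n+2)}_m=q/\beta_0$ and $(P')^{(2n+2)}_m=\beta'_0/q$ for every $m$. The other ingredient is the trivial shift rule $c_{m-1}\,P^{(l)}_m=P^{(l+1)}_{m-1}$, and likewise for $c'$.

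\textbf{Assembling the identities.} Applying the shift rule to $S_{2i+1}=\sum_{l=0}^{2n+1}P^{(l)}_{2i+1}$ gives $\frac{\alpha_{2i}}{\varphi_{2i}}S_{2i+1}=c_{2i}S_{2i+1}=\sum_{l=1}^{2n+2}P^{(l)}_{2i}=S_{2i}-P^{(0)}_{2i}+P^{(2n+2)}_{2i}=S_{2i}-1+\frac{q}{\beta_0}$, which is precisely \eqref{Eq:Action_s'1s1_Lem_3}; the same computation with $c'$ and $\beta'_0/q$ yields \eqref{Eq:Action_s'1s1_Lem_4}. For \eqref{Eq:Action_s'1s1_Lem_1} one has, identically, $\varphi_{2i+1}S_{2i+2}=c_{2i+1}S_{2i+2}=\sum_{l=1}^{2n+2}P^{(l)}_{2i+1}=S_{2i+1}-1+\frac{q}{\beta_0}$, hence $S_{2i}+\varphi_{2i+1}S_{2i+2}=S_{2i}+S_{2i+1}-1+\frac{q}{\beta_0}$; on the other hand $\bigl(1+\frac{\alpha_{2i}}{\varphi_{2i}}\bigr)S_{2i+1}=S_{2i+1}+c_{2i}S_{2i+1}=S_{2i+1}+S_{2i}-1+\frac{q}{\beta_0}$ by \eqref{Eq:Action_s'1s1_Lem_3}, and the two sides agree. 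Equation \eqref{Eq:Action_s'1s1_Lem_2} follows verbatim with $c'$ in place of $c$ and $\beta'_0/q$ in place of $q/\beta_0$. The only step requiring care — hence the one to watch — is keeping the cyclic indices consistent with the parity-dependent definition of $c_m$ and $c'_m$; once the uniform expression $S_m=\sum_{l=0}^{2n+1}P^{(l)}_m$ is established there is no genuine obstacle, the rest being a one-line telescoping.
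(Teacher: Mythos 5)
Your proof is correct, and while it rests on the same underlying telescoping of partial products that drives the paper's computation, it is organized quite differently. The paper proves \eqref{Eq:Action_s'1s1_Lem_1} head-on: it expands $S_{2i}+\varphi_{2i+1}S_{2i+2}$ into four sums, re-indexes two of them, and recombines the remaining two by cancelling the extremal terms of the two geometric-type strings; equations \eqref{Eq:Action_s'1s1_Lem_2}--\eqref{Eq:Action_s'1s1_Lem_4} are then dismissed with ``in a similar manner.'' You instead observe that with $c_{2i}=\alpha_{2i}/\varphi_{2i}$, $c_{2i+1}=\varphi_{2i+1}$ (well defined on $\mathbb{Z}_{2n+2}$ because the modulus is even) every $S_m$ is the sum $\sum_{l=0}^{2n+1}P^{(l)}_m$ of the first $2n+2$ partial products of one cyclic word, so that multiplying by $c_{m-1}$ shifts the window and the only boundary terms are $P^{(0)}=1$ and the full cycle $P^{(2n+2)}=q/\beta_0$ (resp.\ $\beta'_0/q$ for the primed quantities, both computed correctly from $\prod\alpha_i=q$ and the definitions of $\beta_0,\beta'_0$). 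This buys you all four identities, for both parities and both primed and unprimed sums, from a single two-line computation, and it makes transparent why the constants $1-q/\beta_0$ and $1-\beta'_0/q$ appear; the paper's route requires no new notation but forces a separate (and lengthier) verification for each identity. One small remark: your derivation of \eqref{Eq:Action_s'1s1_Lem_1} routes through \eqref{Eq:Action_s'1s1_Lem_3} and its odd-index analogue, whereas the paper proves \eqref{Eq:Action_s'1s1_Lem_1} independently of \eqref{Eq:Action_s'1s1_Lem_3}; since you establish \eqref{Eq:Action_s'1s1_Lem_3} first and without circularity, this is perfectly fine.
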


\begin{proof}
We show equation \eqref{Eq:Action_s'1s1_Lem_1}.
For each $i=0,\ldots,n$, the definition of $S_{2i}$ implies
\begin{align*}
	S_{2i} + \varphi_{2i+1}\,S_{2i+2} &= \sum_{j=0}^{n}\left(\prod_{k=0}^{j-1}\frac{\alpha_{2i+2k}}{\varphi_{2i+2k}}\varphi_{2i+2k+1}\right) + \sum_{j=0}^{n}\left(\prod_{k=0}^{j-1}\frac{\alpha_{2i+2k}}{\varphi_{2i+2k}}\varphi_{2i+2k+1}\right)\frac{\alpha_{2i+2j}}{\varphi_{2i+2j}} \\
	&\quad + \sum_{j=0}^{n}\varphi_{2i+1}\left(\prod_{k=0}^{j-1}\frac{\alpha_{2i+2k+2}}{\varphi_{2i+2k+2}}\varphi_{2i+2k+3}\right) + \sum_{j=0}^{n}\varphi_{2i+1}\left(\prod_{k=0}^{j-1}\frac{\alpha_{2i+2k+2}}{\varphi_{2i+2k+2}}\varphi_{2i+2k+3}\right)\frac{\alpha_{2i+2j+2}}{\varphi_{2i+2j+2}}.
\end{align*}
The second term of right-hand side is rewritten as
\[
	\sum_{j=0}^{n}\left(\prod_{k=0}^{j-1}\frac{\alpha_{2i+2k}}{\varphi_{2i+2k}}\varphi_{2i+2k+1}\right)\frac{\alpha_{2i+2j}}{\varphi_{2i+2j}} = \sum_{j=0}^{n}\frac{\alpha_{2i}}{\varphi_{2i}}\left(\prod_{k=0}^{j-1}\varphi_{2i+2k+1}\frac{\alpha_{2i+2k+2}}{\varphi_{2i+2k+2}}\right).
\]
The third term is rewritten as
\[
	\sum_{j=0}^{n}\varphi_{2i+1}\left(\prod_{k=0}^{j-1}\frac{\alpha_{2i+2k+2}}{\varphi_{2i+2k+2}}\varphi_{2i+2k+3}\right) = \sum_{j=0}^{n}\left(\prod_{k=0}^{j-1}\varphi_{2i+2k+1}\frac{\alpha_{2i+2k+2}}{\varphi_{2i+2k+2}}\right)\varphi_{2i+2j+1}.
\]
The first and fourth terms are rewritten as
\begin{align*}
	&\sum_{j=0}^{n}\left(\prod_{k=0}^{j-1}\frac{\alpha_{2i+2k}}{\varphi_{2i+2k}}\varphi_{2i+2k+1}\right) + \sum_{j=0}^{n}\varphi_{2i+1}\left(\prod_{k=0}^{j-1}\frac{\alpha_{2i+2k+2}}{\varphi_{2i+2k+2}}\varphi_{2i+2k+3}\right)\frac{\alpha_{2i+2j+2}}{\varphi_{2i+2j+2}} \\
	&= 1 + \frac{\alpha_{2i}}{\varphi_{2i}}\varphi_{2i+1} + \frac{\alpha_{2i}}{\varphi_{2i}}\varphi_{2i+1}\frac{\alpha_{2i+2}}{\varphi_{2i+2}}\varphi_{2i+3} + \ldots + \prod_{j=0}^{n-1}\frac{\alpha_{2i+2j}}{\varphi_{2i+2j}}\varphi_{2i+2j+1} \\
	&\quad + \varphi_{2i+1}\frac{\alpha_{2i+2}}{\varphi_{2i+2}} + \varphi_{2i+1}\frac{\alpha_{2i+2}}{\varphi_{2i+2}}\varphi_{2i+3}\frac{\alpha_{2i+4}}{\varphi_{2i+4}} + \ldots + \prod_{j=0}^{n-1}\varphi_{2i+2j+1}\frac{\alpha_{2i+2j+2}}{\varphi_{2i+2j+2}} + \prod_{j=0}^{n}\varphi_{2i+2j+1}\frac{\alpha_{2i+2j+2}}{\varphi_{2i+2j+2}} \\
	&= 1 + \varphi_{2i+1}\frac{\alpha_{2i+2}}{\varphi_{2i+2}} + \varphi_{2i+1}\frac{\alpha_{2i+2}}{\varphi_{2i+2}}\varphi_{2i+3}\frac{\alpha_{2i+4}}{\varphi_{2i+4}} + \ldots + \prod_{j=0}^{n-1}\varphi_{2i+2j+1}\frac{\alpha_{2i+2j+2}}{\varphi_{2i+2j+2}} \\
	&\quad + \frac{\alpha_{2i}}{\varphi_{2i}}\varphi_{2i+1} + \frac{\alpha_{2i}}{\varphi_{2i}}\varphi_{2i+1}\frac{\alpha_{2i+2}}{\varphi_{2i+2}}\varphi_{2i+3} + \ldots + \prod_{j=0}^{n-1}\frac{\alpha_{2i+2j}}{\varphi_{2i+2j}}\varphi_{2i+2j+1} + \prod_{j=0}^{n-1}\varphi_{2i+2j+1}\frac{\alpha_{2i+2j+2}}{\varphi_{2i+2j+2}} \\
	&= \sum_{j=0}^{n}\left(\prod_{k=0}^{j-1}\varphi_{2i+2k+1}\frac{\alpha_{2i+2k+2}}{\varphi_{2i+2k+2}}\right) + \sum_{j=0}^{n}\frac{\alpha_{2i}}{\varphi_{2i}}\left(\prod_{k=0}^{j-1}\varphi_{2i+2k+1}\frac{\alpha_{2i+2k+2}}{\varphi_{2i+2k+2}}\right)\varphi_{2i+2j+1}.
\end{align*}
It follows that
\[
	S_{2i} + \varphi_{2i+1}\,S_{2i+2} = \left(1+\frac{\alpha_{2i}}{\varphi_{2i}}\right)S_{2i+1}.
\]
The other equations can be shown in a similar manner.
\end{proof}

We first show $s'_1\,s_1(\varphi_{2i})=s_1\,s'_1(\varphi_{2i})$ by using this lemma.
For each $i=0,\ldots,n$, the action $s'_1\,s_1(\varphi_{2i})$ is described as
\begin{align*}
	s'_1\,s_1(\varphi_{2i}) &= \frac{\alpha_{2i}\,\alpha_{2i+1}}{s_1(\varphi_{2i+1})}\frac{\sum_{j=0}^{n}\left\{\prod_{k=0}^{j-1}\frac{s_1(\varphi_{2i+2k})}{\alpha_{2i+2k}}\frac{s_1(\varphi_{2i+2k+1})}{\alpha_{2i+2k+1}}\right\}\left\{1+\frac{s_1(\varphi_{2i+2j})}{\alpha_{2i+2j}}\right\}}{\sum_{j=0}^{n}\left\{\prod_{k=0}^{j-1}\frac{s_1(\varphi_{2i+2k+2})}{\alpha_{2i+2k+2}}\frac{s_1(\varphi_{2i+2k+3})}{\alpha_{2i+2k+3}}\right\}\left\{1+\frac{s_1(\varphi_{2i+2j+2})}{\alpha_{2i+2j+2}}\right\}} \\
	&= \frac{\alpha_{2i}\,\alpha_{2i+1}}{\frac{\varphi_{2i+2}}{\alpha_{2i+2}}}\frac{S_{2i+2}\sum_{j=0}^{n}\left(\prod_{k=0}^{j-1}\frac{\varphi_{2i+2k+1}}{\alpha_{2i+2k+1}}\frac{\varphi_{2i+2k+2}}{\alpha_{2i+2k+2}}\right)\frac{S_{2i+2j}+\varphi_{2i+2j+1}S_{2i+2j+2}}{S_{2i+2j+1}}}{S_{2i}\sum_{j=0}^{n}\left(\prod_{k=0}^{j-1}\frac{\varphi_{2i+2k+3}}{\alpha_{2i+2k+3}}\frac{\varphi_{2i+2k+4}}{\alpha_{2i+2k+4}}\right)\frac{S_{2i+2j+2}+\varphi_{2i+2j+3}S_{2i+2j+4}}{S_{2i+2j+3}}}.
\end{align*}
Then we have
\begin{align*}
	\sum_{j=0}^{n}\left(\prod_{k=0}^{j-1}\frac{\varphi_{2i+2k+1}}{\alpha_{2i+2k+1}}\frac{\varphi_{2i+2k+2}}{\alpha_{2i+2k+2}}\right)\frac{S_{2i+2j}+\varphi_{2i+2j+1}S_{2i+2j+2}}{S_{2i+2j+1}}
	&= \sum_{j=0}^{n}\left(\prod_{k=0}^{j-1}\frac{\varphi_{2i+2k+1}}{\alpha_{2i+2k+1}}\frac{\varphi_{2i+2k+2}}{\alpha_{2i+2k+2}}\right)\left(1+\frac{\alpha_{2i+2j}}{\varphi_{2i+2j}}\right) \\
	&= \frac{\alpha_{2i}}{\varphi_{2i}}\,S'_{2i},
\end{align*}
by using equation \eqref{Eq:Action_s'1s1_Lem_1}.
It follows that
\begin{equation}\label{Eq:Action_s'1s1_1}
	s'_1\,s_1(\varphi_{2i}) = \frac{\alpha_{2i}\,\alpha_{2i+1}}{\frac{\varphi_{2i}}{\alpha_{2i}}}\frac{S_{2i+2}\,S'_{2i}}{S_{2i}\,S'_{2i+2}}.
\end{equation}
In a similar manner, we obtain
\[
	s_1\,s'_1(\varphi_{2i}) = \frac{\alpha_{2i}\,\alpha_{2i+1}}{\frac{\varphi_{2i}}{\alpha_{2i}}}\frac{S'_{2i}\,S_{2i+2}}{S'_{2i+2}\,S_{2i}},
\]
by using equation \eqref{Eq:Action_s'1s1_Lem_2}.

We next show $s'_1\,s_1(\varphi_{2i+1})=s_1\,s'_1(\varphi_{2i+1})$.
For each $i=0,\ldots,n$, the action $s'_1\,s_1(\varphi_{2i+1})$ is described as
\begin{align*}
	s'_1\,s_1(\varphi_{2i+1}) &= \frac{\alpha_{2i+1}\,\alpha_{2i+2}}{s_1(\varphi_{2i+2})}\frac{\sum_{j=0}^{n}\left\{\prod_{k=0}^{j-1}\frac{s_1(\varphi_{2i+2k+1})}{\alpha_{2i+2k+1}}\frac{s_1(\varphi_{2i+2k+2})}{\alpha_{2i+2k+2}}\right\}\left\{1+\frac{s_1(\varphi_{2i+2j+1})}{\alpha_{2i+2j+1}}\right\}}{\sum_{j=0}^{n}\left\{\prod_{k=0}^{j-1}\frac{s_1(\varphi_{2i+2k+3})}{\alpha_{2i+2k+3}}\frac{s_1(\varphi_{2i+2k+4})}{\alpha_{2i+2k+4}}\right\}\left\{1+\frac{s_1(\varphi_{2i+2j+3})}{\alpha_{2i+2j+3}}\right\}}, \\
	&= \frac{\alpha_{2i+1}}{\varphi_{2i+3}}\frac{S_{2i+1}\sum_{j=0}^{n}\left(\prod_{k=0}^{j-1}\frac{\varphi_{2i+2k+2}\,\varphi_{2i+2k+3}}{\alpha_{2i+2k+1}\,\alpha_{2i+2k+2}}\right)\left(\frac{S_{2i+2j+2}}{S_{2i+2j+1}}+\frac{\varphi_{2i+2j+2}\,S_{2i+2j+2}}{\alpha_{2i+2j+1}\,\alpha_{2i+2j+2}\,S_{2i+2j+3}}\right)}{S_{2i+3}\sum_{j=0}^{n}\left(\prod_{k=0}^{j-1}\frac{\varphi_{2i+2k+4}\,\varphi_{2i+2k+5}}{\alpha_{2i+2k+3}\,\alpha_{2i+2k+4}}\right)\left(\frac{S_{2i+2j+4}}{S_{2i+2j+3}}+\frac{\varphi_{2i+2j+4}\,S_{2i+2j+4}}{\alpha_{2i+2j+3}\,\alpha_{2i+2j+4}\,S_{2i+2j+5}}\right)}.
\end{align*}
Then we have
\begin{align*}
	&\varphi_{2i+1}\sum_{j=0}^{n}\left(\prod_{k=0}^{j-1}\frac{\varphi_{2i+2k+2}\,\varphi_{2i+2k+3}}{\alpha_{2i+2k+1}\,\alpha_{2i+2k+2}}\right)\left(\frac{S_{2i+2j+2}}{S_{2i+2j+1}}+\frac{\varphi_{2i+2j+2}\,S_{2i+2j+2}}{\alpha_{2i+2j+1}\,\alpha_{2i+2j+2}\,S_{2i+2j+3}}\right) \\
	&= \varphi_{2i+1}\sum_{j=0}^{n}\left(\prod_{k=0}^{j-1}\frac{\varphi_{2i+2k+2}\,\varphi_{2i+2k+3}}{\alpha_{2i+2k+1}\,\alpha_{2i+2k+2}}\right)\frac{1}{\varphi_{2i+2j+1}}\left(1+\frac{\alpha_{2i+2j}}{\varphi_{2i+2j}}-\frac{S_{2i+2j}}{S_{2i+2j+1}}+\frac{\varphi_{2i+2j+1}}{\alpha_{2i+2j+1}}\frac{\varphi_{2i+2j+2}}{\alpha_{2i+2j+2}}\frac{S_{2i+2j+2}}{S_{2i+2j+3}}\right) \\
	&= \sum_{j=0}^{n}\left(\prod_{k=0}^{j-1}\frac{\varphi_{2i+2k+1}}{\alpha_{2i+2k+1}}\frac{\varphi_{2i+2k+2}}{\alpha_{2i+2k+2}}\right)\left(1+\frac{\alpha_{2i+2j}}{\varphi_{2i+2j}}\right) \\
	&\quad + \sum_{j=0}^{n}\left(\prod_{k=0}^{j-1}\frac{\varphi_{2i+2k+1}}{\alpha_{2i+2k+1}}\frac{\varphi_{2i+2k+2}}{\alpha_{2i+2k+2}}\right)\left(\frac{\varphi_{2i+2j+1}}{\alpha_{2i+2j+1}}\frac{\varphi_{2i+2j+2}}{\alpha_{2i+2j+2}}\frac{S_{2i+2j+2}}{S_{2i+2j+3}}-\frac{S_{2i+2j}}{S_{2i+2j+1}}\right),
\end{align*}
by using equation \eqref{Eq:Action_s'1s1_Lem_1}.
The first term of the right-hand side is equivalent to $\frac{\alpha_{2i}}{\varphi_{2i}}\,S'_{2i}$.
Moreover, we rewrite the second term as
\begin{align*}
	&\sum_{j=0}^{n}\left(\prod_{k=0}^{j-1}\frac{\varphi_{2i+2k+1}}{\alpha_{2i+2k+1}}\frac{\varphi_{2i+2k+2}}{\alpha_{2i+2k+2}}\right)\left(\frac{\varphi_{2i+2j+1}}{\alpha_{2i+2j+1}}\frac{\varphi_{2i+2j+2}}{\alpha_{2i+2j+2}}\frac{S_{2i+2j+2}}{S_{2i+2j+3}}-\frac{S_{2i+2j}}{S_{2i+2j+1}}\right) \\
	&= \left(\prod_{j=0}^{n}\frac{\varphi_{2i+2j+1}}{\alpha_{2i+2j+1}}\frac{\varphi_{2i+2j+2}}{\alpha_{2i+2j+2}}\right)\frac{S_{2i+2n+2}}{S_{2i+2n+3}} - \frac{S_{2i}}{S_{2i+1}} \\
	&= \left(\frac{\beta'_0}{q}-1\right)\frac{S_{2i}}{S_{2i+1}} \\
	&= \frac{\frac{\varphi_{2i}}{\alpha_{2i}}\,S_{2i}\,S'_{2i+1}-S_{2i}\,S'_{2i}}{S_{2i+1}},
\end{align*}
by using equation \eqref{Eq:Action_s'1s1_Lem_3}.
It follows that
\begin{equation}\label{Eq:Action_s'1s1_2}
	s'_1\,s_1(\varphi_{2i+1}) = \frac{\alpha_{2i+1}}{\varphi_{2i+1}}\frac{\frac{\alpha_{2i}}{\varphi_{2i}}\,S'_{2i}\,S_{2i+1}+\frac{\varphi_{2i}}{\alpha_{2i}}\,S_{2i}\,S'_{2i+1}-S_{2i}\,S'_{2i}}{\frac{\alpha_{2i+2}}{\varphi_{2i+2}}\,S'_{2i+2}\,S_{2i+3}+\frac{\varphi_{2i+2}}{\alpha_{2i+2}}\,S_{2i+2}\,S'_{2i+3}-S_{2i+2}\,S'_{2i+2}}.
\end{equation}
In a similar manner, we obtain
\[
	s_1\,s'_1(\varphi_{2i+1}) = \frac{\alpha_{2i+1}}{\varphi_{2i+1}}\frac{\frac{\alpha_{2i}}{\varphi_{2i}}\,S'_{2i}\,S_{2i+1}+\frac{\varphi_{2i}}{\alpha_{2i}}\,S_{2i}\,S'_{2i+1}-S_{2i}\,S'_{2i}}{\frac{\alpha_{2i+2}}{\varphi_{2i+2}}\,S'_{2i+2}\,S_{2i+3}+\frac{\varphi_{2i+2}}{\alpha_{2i+2}}\,S_{2i+2}\,S'_{2i+3}-S_{2i+2}\,S'_{2i+2}},
\]
by using equation \eqref{Eq:Action_s'1s1_Lem_2} and \eqref{Eq:Action_s'1s1_Lem_4}.

The proof has been just finished.
In addition, we give the following lemma, which will be used together with equation \eqref{Eq:Action_s'1s1_1} in Appendix \ref{App:Prf_q-FST} to derive system \eqref{Eq:q-FST}.

\begin{lem}
An equation
\begin{equation}\label{Eq:Action_s'1s1_3}
	s'_1\,s_1(\varphi_{2i+1}) = \frac{1+\frac{\alpha_{2i+2}}{\varphi_{2i+2}}}{1+\frac{\alpha_{2i}}{\varphi_{2i}}}\,\frac{\alpha_{2i+1}}{\varphi_{2i+1}}\,\frac{S_{2i}\,S'_{2i+2}+\frac{\alpha_{2i}}{\varphi_{2i}}\,\alpha_{2i+1}\,S'_{2i}\,S_{2i+2}}{S_{2i+2}\,S'_{2i+4}+\frac{\alpha_{2i+2}}{\varphi_{2i+2}}\,\alpha_{2i+3}\,S'_{2i+2}\,S_{2i+4}}\quad (i=0,\ldots,n),
\end{equation}
is satisfied.
\end{lem}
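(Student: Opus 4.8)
The plan is to obtain \eqref{Eq:Action_s'1s1_3} as a direct rewriting of the formula \eqref{Eq:Action_s'1s1_2} established in the proof of Theorem~\ref{Thm:Fund_Rel}; the point is simply to trade the ``odd'' sums $S_{2i+1},S_{2i+3},S'_{2i+1},S'_{2i+3}$ for the ``even'' ones. First I would solve the recursions \eqref{Eq:Action_s'1s1_Lem_1} and \eqref{Eq:Action_s'1s1_Lem_2} for the odd sums,
\[
	S_{2i+1} = \frac{S_{2i}+\varphi_{2i+1}\,S_{2i+2}}{1+\frac{\alpha_{2i}}{\varphi_{2i}}},\qquad
	S'_{2i+1} = \frac{S'_{2i}+\frac{\varphi_{2i+1}}{\alpha_{2i+1}}\,S'_{2i+2}}{1+\frac{\varphi_{2i}}{\alpha_{2i}}},
\]
and substitute these, together with their images under $i\mapsto i+1$, into the numerator and the denominator of the right-hand side of \eqref{Eq:Action_s'1s1_2}.

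The heart of the computation is the collapse of the numerator. After substitution one clears the common factor $1+\tfrac{\alpha_{2i}}{\varphi_{2i}}$ (using $\bigl(1+\tfrac{\varphi_{2i}}{\alpha_{2i}}\bigr)^{-1}=\tfrac{\alpha_{2i}}{\varphi_{2i}}\bigl(1+\tfrac{\alpha_{2i}}{\varphi_{2i}}\bigr)^{-1}$); the three contributions proportional to $S_{2i}S'_{2i}$ then appear with coefficients $\tfrac{\alpha_{2i}}{\varphi_{2i}}$, $1$ and $-\bigl(1+\tfrac{\alpha_{2i}}{\varphi_{2i}}\bigr)$ and cancel identically, so that only the cross terms remain and the numerator reduces to
\[
	\frac{\varphi_{2i+1}}{\alpha_{2i+1}\bigl(1+\frac{\alpha_{2i}}{\varphi_{2i}}\bigr)}\Bigl(S_{2i}\,S'_{2i+2}+\frac{\alpha_{2i}}{\varphi_{2i}}\,\alpha_{2i+1}\,S'_{2i}\,S_{2i+2}\Bigr).
\]
Carrying out the same reduction with $i\mapsto i+1$ on the denominator of \eqref{Eq:Action_s'1s1_2}, forming the quotient, and absorbing the recursion prefactors into the factor $\tfrac{\alpha_{2i+1}}{\varphi_{2i+1}}$ already present there, one arrives at the asserted identity \eqref{Eq:Action_s'1s1_3}.

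I do not expect a real obstacle here: once the substitution is in place the verification is a few lines of algebra, and only the two recursions \eqref{Eq:Action_s'1s1_Lem_1} and \eqref{Eq:Action_s'1s1_Lem_2} are needed, the relations \eqref{Eq:Action_s'1s1_Lem_3} and \eqref{Eq:Action_s'1s1_Lem_4} playing no role at this step. The two points that call for a little care are noticing the exact cancellation of the $S_{2i}S'_{2i}$ terms and keeping the indices of $S_i,S'_i$ modulo $2n+2$, so that for $i=n$ the shift correctly gives $S_{2n+2}=S_0$, $S'_{2n+4}=S'_2$, and so on, consistently with \eqref{Eq:Action_s'1s1_Lem_1} and \eqref{Eq:Action_s'1s1_Lem_2}. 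Alternatively one could repeat the intermediate computation from the proof of Theorem~\ref{Thm:Fund_Rel}, regrouping the sums in the displayed expression for $s'_1\,s_1(\varphi_{2i+1})$ so that $S_{2i}S'_{2i+2}+\tfrac{\alpha_{2i}}{\varphi_{2i}}\alpha_{2i+1}S'_{2i}S_{2i+2}$ is read off directly, but the route through \eqref{Eq:Action_s'1s1_2} is the shortest.
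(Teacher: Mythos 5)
Your argument follows essentially the same route as the paper's. The paper also starts from \eqref{Eq:Action_s'1s1_2}; it first replaces $\frac{\varphi_{2i}}{\alpha_{2i}}S_{2i}S'_{2i+1}-S_{2i}S'_{2i}$ by $\bigl(\frac{\beta'_0}{q}-1\bigr)S_{2i}$ using \eqref{Eq:Action_s'1s1_Lem_4} (this is \eqref{Eq:Action_s'1s1_Lem_5}) and then eliminates $S_{2i+1}$ by combining \eqref{Eq:Action_s'1s1_Lem_1} with the auxiliary identity \eqref{Eq:Action_s'1s1_Lem_6}; the outcome \eqref{Eq:Action_s'1s1_Lem_7} is precisely the collapsed numerator you obtain, namely $\frac{\varphi_{2i+1}}{\alpha_{2i+1}}\bigl(1+\frac{\alpha_{2i}}{\varphi_{2i}}\bigr)^{-1}\bigl(S_{2i}S'_{2i+2}+\frac{\alpha_{2i}}{\varphi_{2i}}\alpha_{2i+1}S'_{2i}S_{2i+2}\bigr)$. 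Your variant, substituting the solved recursions \eqref{Eq:Action_s'1s1_Lem_1} and \eqref{Eq:Action_s'1s1_Lem_2} directly and watching the three $S_{2i}S'_{2i}$ terms cancel, is a slightly cleaner packaging of the same algebra, and you are right that \eqref{Eq:Action_s'1s1_Lem_3} and \eqref{Eq:Action_s'1s1_Lem_4} are not needed on this route.

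The one point you cannot wave through is the final prefactor. The collapsed numerator carries $\frac{\varphi_{2i+1}}{\alpha_{2i+1}}$ and the collapsed denominator carries $\frac{\varphi_{2i+3}}{\alpha_{2i+3}}$; after multiplying by the factor $\frac{\alpha_{2i+1}}{\varphi_{2i+1}}$ already present in \eqref{Eq:Action_s'1s1_2}, the first of these cancels entirely and the second survives, so the computation actually yields
\[
	s'_1\,s_1(\varphi_{2i+1}) = \frac{1+\frac{\alpha_{2i+2}}{\varphi_{2i+2}}}{1+\frac{\alpha_{2i}}{\varphi_{2i}}}\,\frac{\alpha_{2i+3}}{\varphi_{2i+3}}\,\frac{S_{2i}\,S'_{2i+2}+\frac{\alpha_{2i}}{\varphi_{2i}}\,\alpha_{2i+1}\,S'_{2i}\,S_{2i+2}}{S_{2i+2}\,S'_{2i+4}+\frac{\alpha_{2i+2}}{\varphi_{2i+2}}\,\alpha_{2i+3}\,S'_{2i+2}\,S_{2i+4}},
\]
with $\frac{\alpha_{2i+3}}{\varphi_{2i+3}}$ in place of the $\frac{\alpha_{2i+1}}{\varphi_{2i+1}}$ printed in \eqref{Eq:Action_s'1s1_3}. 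The same conclusion follows from the paper's own \eqref{Eq:Action_s'1s1_Lem_5} and \eqref{Eq:Action_s'1s1_Lem_7}, and it is corroborated by the way the lemma is used in Appendix \ref{App:Prf_q-FST}: after the substitution $\varphi_j\mapsto\frac{\alpha_j}{\varphi_j}$ and the shift $i\mapsto i-1$, the factor appearing in the formula for $\tau_1(\varphi_{2i-1})$ there is $\varphi_{2i+1}$, which is the image of $\frac{\alpha_{2i+3}}{\varphi_{2i+3}}$ and not of $\frac{\alpha_{2i+1}}{\varphi_{2i+1}}$. So the statement as printed appears to carry an index typo; your derivation is sound, but the step ``absorbing the recursion prefactors into the factor $\frac{\alpha_{2i+1}}{\varphi_{2i+1}}$ already present'' does not reproduce the printed right-hand side, and you should record the corrected index rather than asserting agreement with \eqref{Eq:Action_s'1s1_3} as it stands.
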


\begin{proof}
For each $i=0,\ldots,n$, equation \eqref{Eq:Action_s'1s1_2} is rewritten as
\begin{equation}\label{Eq:Action_s'1s1_Lem_5}
	s'_1\,s_1(\varphi_{2i+1}) = \frac{\alpha_{2i+1}}{\varphi_{2i+1}}\frac{\frac{\alpha_{2i}}{\varphi_{2i}}\,S'_{2i}\,S_{2i+1}+\left(\frac{\beta'_0}{q}-1\right)S_{2i}}{\frac{\alpha_{2i+2}}{\varphi_{2i+2}}\,S'_{2i+2}\,S_{2i+3}+\left(\frac{\beta'_0}{q}-1\right)S_{2i+2}}.
\end{equation}
On the other hand, in a similar manner as \eqref{Eq:Action_s'1s1_Lem_1}, we have
\begin{equation}\label{Eq:Action_s'1s1_Lem_6}
	\frac{\alpha_{2i}}{\varphi_{2i}}\,S'_{2i} - \frac{\varphi_{2i+1}}{\alpha_{2i+1}}\,S'_{2i+2} = \left(1-\frac{\beta'_0}{q}\right)\left(1+\frac{\alpha_{2i}}{\varphi_{2i}}\right).
\end{equation}
Equations \eqref{Eq:Action_s'1s1_Lem_1} and \eqref{Eq:Action_s'1s1_Lem_6} give
\begin{equation}\begin{split}\label{Eq:Action_s'1s1_Lem_7}
    	\left(1+\frac{\alpha_{2i}}{\varphi_{2i}}\right)\left\{\frac{\alpha_{2i}}{\varphi_{2i}}\,S'_{2i}\,S_{2i+1}+\left(\frac{\beta'_0}{q}-1\right)S_{2i}\right\}
	&= \frac{\alpha_{2i}}{\varphi_{2i}}\left(1+\frac{\alpha_{2i}}{\varphi_{2i}}\right)S'_{2i}\,S_{2i+1} + \frac{\varphi_{2i+1}}{\alpha_{2i+1}}\,S_{2i}\,S'_{2i+2} - \frac{\alpha_{2i}}{\varphi_{2i}}\,S_{2i}\,S'_{2i} \\
	&= \frac{\varphi_{2i+1}}{\alpha_{2i+1}}S_{2i}\,S'_{2i+2} + \frac{\alpha_{2i}}{\varphi_{2i}}\,\varphi_{2i+1}\,S'_{2i}\,S_{2i+2}.
\end{split}\end{equation}
Substituting \eqref{Eq:Action_s'1s1_Lem_7} to \eqref{Eq:Action_s'1s1_Lem_5}, we obtain equation \eqref{Eq:Action_s'1s1_3}.
\end{proof}

\section{Proof of Theorem \ref{Thm:q-FST}}\label{App:Prf_q-FST}

We prove the first half of the theorem.
The actions
\[
	\tau_1(a_i) = a_i,\quad
	\tau_1(b_i) = b_i\quad (i=1,\ldots,n+1),\quad
	\tau_1(t) = q\,t,
\]
are obtained immediately from
\begin{equation}\label{Proof:Gen_q-PVI_par}
	a_i^{2n+2} = \frac{q^{n-2i+3}\,\beta_0}{\alpha_{2i}\,\alpha_{2i+1}^2\ldots\alpha_{2i+2n}^{2n+1}\,\beta'_0},\quad
	b_i^{2n+2} = \frac{q^{n-2i+2}\,\beta_0}{\alpha_{2i+1}\,\alpha_{2i+2}^2\ldots\alpha_{2i+2n+1}^{2n+1}\,\beta'_0}\quad (i=1,\ldots,n+1),\quad
	t = \beta'_0.
\end{equation}
Hence the remaining problems are the actions $\tau_1(f_i)$ and $\tau_1(g_i)$.

The action $\tau_1(g_i)$ is derived as follows.
Equation \eqref{Eq:Action_s'1s1_1} implies
\begin{equation}\label{Proof:Gen_q-PVI_g_1}
	\tau_1(\varphi_{2i}) = \alpha_{2i}\,\varphi_{2i}\,\alpha_{2i+1}\,\frac{\hat{S}'_{2i}\,\hat{S}_{2i+2}}{\hat{S}_{2i}\,\hat{S}'_{2i+2}}\quad (i=1,\ldots,n),
\end{equation}
where
\begin{equation}\begin{split}\label{Proof:Gen_q-PVI_g_2}
	\hat{S}_{2i} &= \sum_{j=i}^{n}\left(\prod_{k=i}^{j-1}\varphi_{2k}\frac{\alpha_{2k+1}}{\varphi_{2k+1}}\right)(1+\varphi_{2j}) + \left(\prod_{k=i}^{n}\varphi_{2k}\frac{\alpha_{2k+1}}{\varphi_{2k+1}}\right)(1+\varphi_0) \\
	&\quad + \sum_{j=1}^{i-1}\left(\prod_{k=i}^{n}\varphi_{2k}\frac{\alpha_{2k+1}}{\varphi_{2k+1}}\right)\left(\prod_{k=0}^{j-1}\varphi_{2k}\frac{\alpha_{2k+1}}{\varphi_{2k+1}}\right)(1+\varphi_{2j}), \\
	\hat{S}'_{2i} &= \sum_{j=i}^{n}\left(\prod_{k=i}^{j-1}\frac{1}{\varphi_{2k}}\frac{1}{\varphi_{2k+1}}\right)\left(1+\frac{1}{\varphi_{2j}}\right) + \left(\prod_{k=i}^{n}\frac{1}{\varphi_{2k}}\frac{1}{\varphi_{2k+1}}\right)\left(1+\frac{1}{\varphi_0}\right) \\
	&\quad + \sum_{j=1}^{i-1}\left(\prod_{k=i}^{n}\frac{1}{\varphi_{2k}}\frac{1}{\varphi_{2k+1}}\right)\left(\prod_{k=0}^{j-1}\frac{1}{\varphi_{2k}}\frac{1}{\varphi_{2k+1}}\right)\left(1+\frac{1}{\varphi_{2j}}\right),
\end{split}\end{equation}
for $i=0,\ldots,n$.
We assume that the indices of $\hat{S}_i,\hat{S}'_i$ are congruent modulo $2n+2$ as well as $\alpha_i,\varphi_i,S_i,S'_i$ are.
On the other hand, the definition of $f_i,g_i$ implies
\begin{equation}\begin{split}\label{Proof:Gen_q-PVI_g_3}
	\varphi_{2i} &= -\frac{b_i}{g_i}\quad (i=0,\ldots,n), \\
	\varphi_1 &= -\frac{g_0\,(b_1-g_1)}{b_1\,(b_0-g_0)}\frac{t}{f_1}, \\
	\varphi_{2i+1} &= -\frac{g_i\,(b_{i+1}-g_{i+1})}{b_{i+1}\,(b_i-g_i)}\frac{f_i}{f_{i+1}}\quad (i=1,\ldots,n-1), \\
	\varphi_{2n+1} &= -\frac{g_n\,(b_0-g_0)}{b_0\,(b_n-g_n)}\,f_n.
\end{split}\end{equation}
Substituting \eqref{Proof:Gen_q-PVI_g_3} to \eqref{Proof:Gen_q-PVI_g_2}, we have
\begin{equation}\begin{split}\label{Proof:Gen_q-PVI_g_4}
	&\hat{S}_{2i} = -\frac{b_i-g_i}{q^{\frac{n-2}{2}}t\,g_0\,f_i}\left(\prod_{j=i}^{n}\frac{1}{g_j^2}\right)G_i,\quad
	\hat{S}'_{2i} = \frac{b_i-g_i}{t\,b_i\,f_i}\,F_i\quad(i=1,\ldots,n), \\
	&\hat{S}_{2n+2} = -\frac{b_0-g_0}{q^{\frac{n}{2}}t\,g_0}\,G_{n+1},\quad
	\hat{S}'_{2n+2} = \frac{b_0-g_0}{t\,b_0}\,F_{n+1}.
\end{split}\end{equation}
Substituting \eqref{Proof:Gen_q-PVI_g_3} and \eqref{Proof:Gen_q-PVI_g_4} to \eqref{Proof:Gen_q-PVI_g_1}, we obtain
\begin{equation}\label{Proof:Gen_q-PVI_g_5}
	\tau_1(g_i) = \frac{1}{g_i}\frac{G_i\,F_{i+1}}{F_i\,G_{i+1}}\quad (i=1,\ldots,n).
\end{equation}
Note that
\[
	\tau_1(g_0) = \frac{1}{q^{\frac n2}t\,\tau_1(g_1)\ldots\tau_1(g_n)} = \frac{1}{q^{n-1}t^2\,g_0}\frac{G_{n+1}\,F_1}{F_{n+1}\,G_1}.
\]

The action $\tau_1(f_i)$ is derived as follows.
Equation \eqref{Eq:Action_s'1s1_3} implies
\[
	\tau_1(\varphi_{2i-1}) = \frac{1+\varphi_{2i}}{1+\varphi_{2i-2}}\,\varphi_{2i+1}\,\frac{\hat{S}_{2i-2}\,\hat{S}'_{2i}+\varphi_{2i-2}\,\alpha_{2i-1}\,\hat{S}'_{2i-2}\,\hat{S}_{2i}}{\hat{S}_{2i}\,\hat{S}'_{2i+2}+\varphi_{2i}\,\alpha_{2i+1}\,\hat{S}'_{2i}\,\hat{S}_{2i+2}}\quad (i=1,\ldots,n).
\]
It follows that
\begin{equation}\label{Proof:Gen_q-PVI_f_1}
	\tau_1(\varphi_{2i-1}\,\varphi_{2i}) = \frac{1+\varphi_{2i}}{1+\varphi_{2i-2}}\,\alpha_{2i}\,\varphi_{2i}\,\alpha_{2i+1}\,\varphi_{2i+1}\,\frac{\hat{S}'_{2i}\,\hat{S}_{2i+2}}{\hat{S}_{2i}\,\hat{S}'_{2i+2}}\frac{\hat{S}_{2i-2}\,\hat{S}'_{2i}+\varphi_{2i-2}\,\alpha_{2i-1}\,\hat{S}'_{2i-2}\,\hat{S}_{2i}}{\hat{S}_{2i}\,\hat{S}'_{2i+2}+\varphi_{2i}\,\alpha_{2i+1}\,\hat{S}'_{2i}\,\hat{S}_{2i+2}}\quad (i=1,\ldots,n).
\end{equation}
Substituting \eqref{Proof:Gen_q-PVI_g_3} and \eqref{Proof:Gen_q-PVI_g_4} to \eqref{Proof:Gen_q-PVI_f_1}, we have
\begin{equation}\begin{split}\label{Proof:Gen_q-PVI_f_2}
	\tau_1\left(\frac{\frac{b_1}{g_1}-1}{\frac{b_0}{g_0}-1}\frac{t}{f_1}\right) &= \frac{f_1\,g_1\,F_1\,G_2}{q^{n-1}t^2\,g_0\,G_1\,F_2}\frac{G_{n+1}\,F_1-q^{n-1}t^2\,g_0\,F_{n+1}\,a_1\,G_1}{G_1\,F_2-g_1\,F_1\,a_2\,G_2}, \\
	\tau_1\left(\frac{\frac{b_{i+1}}{g_{i+1}}-1}{\frac{b_i}{g_i}-1}\frac{f_i}{f_{i+1}}\right) &= \frac{f_{i+1}\,g_{i+1}\,F_{i+1}\,G_{i+2}}{f_i\,g_i\,G_{i+1}\,F_{i+2}}\frac{G_i\,F_{i+1}-g_i\,F_i\,a_{i+1}\,G_{i+1}}{G_{i+1}\,F_{i+2}-g_{i+1}\,F_{i+1}\,a_{i+2}\,G_{i+2}}\quad (i=1,\ldots,n).
\end{split}\end{equation}
Combining \eqref{Proof:Gen_q-PVI_g_5} and \eqref{Proof:Gen_q-PVI_f_2}, we obtain
\begin{align*}
	\tau_1\left(\frac{f_1}{t}\right) &= \frac{1}{f_1}\,\frac{F_2}{F_{n+1}}\frac{\frac{b_1}{\tau_1(g_1)}-1}{\frac{b_0}{\tau_1(g_0)}-1}\frac{\tau_1(g_1)-a_2}{\tau_1(g_0)-a_1}, \\
	\tau_1\left(\frac{f_{i+1}}{f_i}\right) &= \frac{f_i}{f_{i+1}}\frac{F_{i+2}}{F_i}\frac{\frac{b_{i+1}}{\tau_1(g_{i+1})}-1}{\frac{b_i}{\tau_1(g_i)}-1}\frac{\tau_1(g_{i+1})-a_{i+2}}{\tau_1(g_i)-a_{i+1}}\quad (i=1,\ldots,n-1).
\end{align*}

We next prove the latter half of the theorem.
The actions of $r_0,\ldots,r_{2n+1},s'_1\,\pi$ on the parameters and the independent variable are obtained from \eqref{Proof:Gen_q-PVI_par}.
The actions of $r_0,\ldots,r_{2n+1}$ on the dependent variables are derived by direct calculations with
\[
	\frac{\alpha_{2i+1}+\varphi_{2i+1}}{1+\varphi_{2i+1}} = \frac{R_{i+1}^{b,a,a}}{R_{i+1}^{b,b,b}}\quad (i=0,\ldots,n),
\]
and
\begin{align*}
	R_{i+1}^{b,a,a} - \frac{a_{i+1}}{g_{i+1}}\,R_{i+1}^{b,b,b} &= \left(1-\frac{b_{i+1}}{g_{i+1}}\right)R_{i+1}^{a,a,a}\quad (i=0,\ldots,n-1), \\
	R_{n+1}^{b,a,a} - \frac{q\,a_{n+1}}{g_0}\,R_{n+1}^{b,b,b} &= \left(1-\frac{b_0}{g_0}\right)R_{n+1}^{a,a,a}, \\
	R_{i+1}^{b,b,b} - \frac{b_i}{g_i}\,R_{i+1}^{b,a,a} &= \left(1-\frac{b_i}{g_i}\right)R_{i+1}^{b,a,b}\quad (i=0,\ldots,n).
\end{align*}
The action of $s'_1\,\pi$ on the dependent variables is derived as follows.
We have
\[
	s'_1\,\pi(\varphi_{2i}) = \frac{\alpha_{2i+1}\,\alpha_{2i+2}}{\varphi_{2i+2}}\frac{S'_{2i+1}}{S'_{2i+3}},\quad
	s'_1\,\pi(\varphi_{2i+1}) = \frac{\alpha_{2i+2}\,\alpha_{2i+3}}{\varphi_{2i+3}}\frac{S'_{2i+2}}{S'_{2i+4}}\quad (i=0,\ldots,n).
\]
It follows that
\begin{equation}\begin{split}\label{Proof:Gen_q-PVI_Weyl_1}
	s'_1\,\pi\left(\frac{f_i}{f_{i+1}}\right) &= \frac{\alpha_{2i+2}\,\alpha_{2i+3}}{\varphi_{2i+3}}\frac{\alpha_{2i+3}\,\alpha_{2i+4}}{\varphi_{2i+4}}\frac{\frac{\alpha_{2i+1}\,\alpha_{2i+2}}{\varphi_{2i+2}}\,S'_{2i+1}+S'_{2i+3}}{\frac{\alpha_{2i+3}\,\alpha_{2i+4}}{\varphi_{2i+4}}\,S'_{2i+3}+S'_{2i+5}}\frac{S'_{2i+2}}{S'_{2i+4}}\quad (i=1,\ldots,n-1), \\
	s'_1\,\pi(f_n) &= \frac{\alpha_0\,\alpha_1}{\varphi_1}\frac{\alpha_1\,\alpha_2}{\varphi_2}\frac{\frac{\alpha_{2n+1}\,\alpha_0}{\varphi_0}\,S'_{2n+1}+S'_1}{\frac{\alpha_1\,\alpha_2}{\varphi_2}\,S'_1+S'_3}\frac{S'_0}{S'_2}, \\
	s'_1\,\pi\left(\frac{g_i}{b_i}\right) &= -\frac{\varphi_{2i+2}}{\alpha_{2i+1}\,\alpha_{2i+2}}\frac{S'_{2i+3}}{S'_{2i+1}}\quad (i=1,\ldots,n).
\end{split}\end{equation}
On the other hand, substituting \eqref{Proof:Gen_q-PVI_g_3} to
\[
	S'_{2i+1} = \sum_{j=0}^{n}\left(\prod_{k=0}^{j-1}\frac{\varphi_{2i+2k+1}}{\alpha_{2i+2k+1}}\frac{\varphi_{2i+2k+2}}{\alpha_{2i+2k+2}}\right)\left(1+\frac{\varphi_{2i+2j+1}}{\alpha_{2i+2j+1}}\right)\quad (i=0,\ldots,n),
\]
we have
\begin{equation}\label{Proof:Gen_q-PVI_Weyl_2}
	S'_{2i+1} = -\frac{g_i}{a_{i+1}}\,R_i^{*}\quad (i=0,\ldots,n).
\end{equation}
Equations \eqref{Eq:Action_s'1s1_Lem_2}, \eqref{Eq:Action_s'1s1_Lem_4} and \eqref{Proof:Gen_q-PVI_Weyl_2} imply
\begin{equation}\begin{split}\label{Proof:Gen_q-PVI_Weyl_3}
	S'_{2i+2} &= \frac{\alpha_{2i+1}}{\varphi_{2i+1}}\left(S'_{2i+1}-1+\frac tq\right) = R_{i+1}^{*}+1-\frac tq\quad (i=0,\ldots,n-1), \\
	S'_0 &= \frac{\alpha_{2n+1}}{\varphi_{2n+1}}\left(S'_{2n+1}-1+\frac tq\right) = R_0^{*}+1-\frac tq.
\end{split}\end{equation}
Substituting \eqref{Proof:Gen_q-PVI_g_3}, \eqref{Proof:Gen_q-PVI_Weyl_2} and \eqref{Proof:Gen_q-PVI_Weyl_3} to \eqref{Proof:Gen_q-PVI_Weyl_1}, we obtain
\begin{align*}
	s'_1\,\pi\left(\frac{f_i}{f_{i+1}}\right) &= \frac{(g_i\,R_i^{*}-b_{i+1}\,R_{i+1}^{*})(b_{i+1}-g_{i+1})\left(R_{i+1}^{*}+1-\frac tq\right)f_{i+2}}{(g_{i+1}\,R_{i+1}^{*}-b_{i+2}\,R_{i+2}^{*})(b_{i+2}-g_{i+2})\left(R_{i+2}^{*}+1-\frac tq\right)f_{i+1}}\quad (i=1,\ldots,n-2), \\
	s'_1\,\pi\left(\frac{f_{n-1}}{f_n}\right) &= q\,\frac{(g_{n-1}\,R_{n-1}^{*}-b_n\,R_n^{*})(b_n-g_n)\left(R_n^{*}+1-\frac tq\right)}{(g_n\,R_n^{*}-b_{n+1}\,R_0^{*})(b_0-g_0)\left(R_0^{*}+1-\frac tq\right)f_n}, \\
	s'_1\,\pi(f_n) &= q\,\frac{(g_n\,R_n^{*}-b_{n+1}\,R_0^{*})(b_0-g_0)\left(R_0^{*}+1-\frac tq\right)f_1}{(g_0\,R_0^{*}-b_1\,R_1^{*})(b_1-g_1)\left(R_1^{*}+1-\frac tq\right)t}, \\
	s'_1\,\pi\left(\frac{g_i}{b_i}\right) &= \frac{b_{i+1}\,R_{i+1}^{*}}{g_i\,R_i^{*}}\quad (i=1,\ldots,n-1), \\
	s'_1\,\pi\left(\frac{g_n}{b_n}\right) &= \frac{b_{n+1}\,R_0^{*}}{g_n\,R_n^{*}}.
\end{align*}

\section{Proof of Theorem \ref{Thm:q-Garnier}}\label{App:Prf_q-Garnier}

We prove the first half of the theorem.
It is obvious that the compatibility condition of \eqref{Eq:q-Garnier_Lax_1} is equivalent to that of \eqref{Eq:q-Garnier_Lax_2}.
Then the compatibility condition of \eqref{Eq:q-Garnier_Lax_2} is described as
\begin{equation}\begin{split}\label{Proof:q-Laplace_1}
	\tau_2(\hat{M}_{i,i})\,\Gamma_{i,i} &= \Gamma_{i,i}\,\hat{M}_{i,i}\quad (i=1,\ldots,n+1),
\end{split}\end{equation}
and
\begin{equation}\begin{split}\label{Proof:q-Laplace_2}
	\tau_2(\hat{M}_{i,i})\,\Gamma_{i,i+1} + \tau_2(\hat{M}_{i,i+1})\,\Gamma_{i+1,i+1} &= \Gamma_{i,i}\,\hat{M}_{i,i+1} + \Gamma_{i,i+1}\,\hat{M}_{i+1,i+1}\quad (i=1,\ldots,n), \\
	\tau_2(\hat{M}_{n+1,n+1})\,\Gamma_{n+1,1} + \tau_2(\hat{M}_{n+1,1})\,\Gamma_{1,1} &= \Gamma_{n+1,n+1}\,\hat{M}_{n+1,1} + q^{-1}\,\Gamma_{n+1,1}\,\hat{M}_{1,1}, \\
	\tau_2(\hat{M}_{i,i+1})\,\Gamma_{i+1,i+2} &= \Gamma_{i,i+1}\,\hat{M}_{i+1,i+2}\quad (i=1,\ldots,n-1), \\
	\tau_2(\hat{M}_{n,n+1})\,\Gamma_{n+1,1} &= \Gamma_{n,n+1}\,\hat{M}_{n+1,1}, \\
	\tau_2(\hat{M}_{n+1,1})\,\Gamma_{1,2} &= q^{-1}\,\Gamma_{n+1,1}\,\hat{M}_{1,2}.
\end{split}\end{equation}
Equation \eqref{Proof:q-Laplace_2} implies
\begin{equation}\begin{split}\label{Proof:q-Laplace_3}
	&\tau_2(\hat{M}_{i,i+1}^{-1})\left\{\Gamma_{i,i}+\left(z\,I-\tau_2(\hat{M}_{i,i})\right)\,\Gamma_{i,i+1}\,\hat{M}_{i,i+1}^{-1}\right\} \\
	&= \left\{\Gamma_{i+1,i+1}+\Gamma_{i+1,i+2}\,\hat{M}_{i+1,i+2}^{-1}\left(z\,I-\hat{M}_{i+1,i+1}\right)\right\}\hat{M}_{i,i+1}^{-1}\quad (i=1,\ldots,n-1), \\
	&\tau_2(\hat{M}_{n,n+1}^{-1})\left\{\Gamma_{n,n}+\left(z\,I-\tau_2(\hat{M}_{n,n})\right)\,\Gamma_{n,n+1}\,\hat{M}_{n,n+1}^{-1}\right\} \\
	&= \left\{\Gamma_{n+1,n+1}+\Gamma_{n+1,1}\,\hat{M}_{n+1,1}^{-1}\left(z\,I-\hat{M}_{n+1,n+1}\right)\right\}\hat{M}_{n,n+1}^{-1}, \\
	&\tau_2(\hat{M}_{n+1,1}^{-1})\left\{\Gamma_{n+1,n+1}+\left(z\,I-\tau_2(\hat{M}_{n+1,n+1})\right)\,\Gamma_{n+1,1}\,\hat{M}_{n+1,1}^{-1}\right\} \\
	&= \left\{\Gamma_{1,1}+q\,\Gamma_{1,2}\,\hat{M}_{1,2}^{-1}\left(z\,I-q^{-1}\,\hat{M}_{1,1}\right)\right\}\hat{M}_{n+1,1}^{-1}.
\end{split}\end{equation}
Then we obtain
\begin{align*}
	&\tau_2(\mathcal{A})\,\mathcal{B} \\
	&= \tau_2(\hat{M}_{n+1,1}^{-1})\ldots\left(z\,I-\tau_2(\hat{M}_{2,2})\right)\tau_2(\hat{M}_{1,2}^{-1})\left(z\,I-\tau_2(\hat{M}_{1,1})\right)\left\{\Gamma_{1,1}+\Gamma_{1,2}\,\hat{M}_{1,2}^{-1}\left(z\,I-\hat{M}_{1,1}\right)\right\} \\
	&= \tau_2(\hat{M}_{n+1,1}^{-1})\ldots\left(z\,I-\tau_2(\hat{M}_{2,2})\right)\tau_2(\hat{M}_{1,2}^{-1})\left\{\Gamma_{1,1}+\left(z\,I-\tau_2(\hat{M}_{1,1})\right)\Gamma_{1,2}\,\hat{M}_{1,2}^{-1}\right\}\left(z\,I-\hat{M}_{1,1}\right) \\
	&= \tau_2(\hat{M}_{n+1,1}^{-1})\ldots \tau_2(\hat{M}_{2,3}^{-1})\left(z\,I-\tau_2(\hat{M}_{2,2})\right)\left\{\Gamma_{2,2}+\Gamma_{2,3}\,\hat{M}_{2,3}^{-1}\left(z\,I-\hat{M}_{2,2}\right)\right\}\hat{M}_{1,2}^{-1}\left(z\,I-\hat{M}_{1,1}\right) \\
	&= \ldots \\
	&= \tau_2(\hat{M}_{n+1,1}^{-1})\left(z\,I-\tau_2(\hat{M}_{n+1,n+1})\right)\left\{\Gamma_{n+1,n+1}+\Gamma_{n+1,1}\,\hat{M}_{n+1,1}^{-1}\left(z\,I-\hat{M}_{n+1,n+1}\right)\right\}\hat{M}_{n,n+1}^{-1}\ldots\left(z\,I-\hat{M}_{1,1}\right) \\
	&= \tau_2(\hat{M}_{n+1,1}^{-1})\left\{\Gamma_{n+1,n+1}+\left(z\,I-\tau_2(\hat{M}_{n+1,n+1})\right)\Gamma_{n+1,1}\,\hat{M}_{n+1,1}^{-1}\right\}\left(z\,I-\hat{M}_{n+1,n+1}\right)\hat{M}_{n,n+1}^{-1}\ldots\left(z\,I-\hat{M}_{1,1}\right) \\
	&= \left\{\Gamma_{1,1}+\Gamma_{1,2}\,\hat{M}_{1,2}^{-1}\left(q\,z\,I-\hat{M}_{1,1}\right)\right\}\hat{M}_{n+1,1}^{-1}\left(z\,I-\hat{M}_{n+1,n+1}\right)\ldots\left(z\,I-\hat{M}_{1,1}\right) \\
	&= T_{q,z}(\mathcal{B})\,\mathcal{A},
\end{align*}
by using \eqref{Proof:q-Laplace_1} and \eqref{Proof:q-Laplace_3}.

We next prove the latter half of the theorem.

(1) It is obvious that the matrix $\mathcal{A}(z)$ is a polynomial of $(n+1)$-st order in $z$.

(2) It is easy to verify that
\[
	\mathcal{A}_{n+1} = \hat{M}_{n+1,1}^{-1}\,\hat{M}_{n,n+1}^{-1}\ldots\hat{M}_{1,2}^{-1} = a_1^{n+1}\begin{pmatrix}-t^{-1}&0\\{*}&-1\end{pmatrix}\begin{pmatrix}-1&0\\{*}&-1\end{pmatrix}\ldots\begin{pmatrix}-1&0\\{*}&-1\end{pmatrix} = (-a_1)^{n+1}\begin{pmatrix}t^{-1}&0\\{*}&1\end{pmatrix}.
\]
Hence the remaining problem is the eigenvalues of the matrix $\mathcal{A}_0$.
We obtain
\begin{align*}
	\det M
	&= a_1^{2n+2}\,\det\begin{pmatrix}\hat{M}_{1,1}&\hat{M}_{1,2}&&&&\\&\hat{M}_{2,2}&\hat{M}_{2,3}&&&\\&&\hat{M}_{3,3}&&\\&&&\ddots&&\\&&&&\hat{M}_{n,n}&\hat{M}_{n,n+1}\\z\,\hat{M}_{n+1,1}&&&&&\hat{M}_{n+1,n+1}\end{pmatrix} \\
	&= t\,\det\begin{pmatrix}\hat{M}_{1,2}^{-1}\,\hat{M}_{1,1}&I&&&&\\&\hat{M}_{2,3}^{-1}\,\hat{M}_{2,2}&I&&&\\&&\hat{M}_{3,4}^{-1}\,\hat{M}_{3,3}&&\\&&&\ddots&&\\&&&&\hat{M}_{n,n+1}^{-1}\,\hat{M}_{n,n}&I\\z\,I&&&&&\hat{M}_{n+1,1}^{-1}\,\hat{M}_{n+1,n+1}\end{pmatrix} \\
	&= t\,\det\begin{pmatrix}O&I&&&&\\-\hat{M}_{2,3}^{-1}\,\hat{M}_{2,2}\,\hat{M}_{1,2}^{-1}\,\hat{M}_{1,1}&{*}&I&&&\\&&\hat{M}_{3,4}^{-1}\,\hat{M}_{3,3}&&\\&&&\ddots&&\\&&&&\hat{M}_{n,n+1}^{-1}\,\hat{M}_{n,n}&I\\z\,I&&&&&\hat{M}_{n+1,1}^{-1}\,\hat{M}_{n+1,n+1}\end{pmatrix} \\
	&= t\,\det\begin{pmatrix}O&I&&&&\\O&{*}&I&&&\\\hat{M}_{3,4}^{-1}\,\hat{M}_{3,3}\,\hat{M}_{2,3}^{-1}\,\hat{M}_{2,2}\,\hat{M}_{1,2}^{-1}\,\hat{M}_{1,1}&&{*}&&\\&&&\ddots&&\\&&&&{*}&I\\z\,I&&&&&\hat{M}_{n+1,1}^{-1}\,\hat{M}_{n+1,n+1}\end{pmatrix} \\
	&= \ldots \\
	&= t\,\det\begin{pmatrix}O&I&&&&\\O&{*}&I&&&\\O&&{*}&&\\&&&\ddots&&\\(-1)^{n-1}\,\hat{M}_{n,n+1}^{-1}\,\hat{M}_{n,n}\ldots\hat{M}_{1,2}^{-1}\,\hat{M}_{1,1}&&&&{*}&I\\z\,I&&&&&\hat{M}_{n+1,1}^{-1}\,\hat{M}_{n+1,n+1}\end{pmatrix} \\
	&= t\,\det\begin{pmatrix}O&I&&&&\\O&{*}&I&&&\\O&&{*}&&\\&&&\ddots&&\\O&&&&{*}&I\\z\,I-\mathcal{A}_0&&&&&{*}\end{pmatrix} \\
	&= t\,\det\left(z\,I-\mathcal{A}_0\right),
\end{align*}
by using
\begin{equation}\label{Proof:q-Garnier_1}
	\det\hat{M}_{i,i+1} = \frac{1}{a_1^2}\quad (i=1,\ldots,n),\quad
	\det\hat{M}_{n+1,1}^{-1} = \frac{t}{a_1^2}.
\end{equation}
On the other hand, thanks to Lemma 2.4 of \cite{FS2}, we have
\[
	\det M = \left(t\,z-q^{-\frac n2}\right)\left(z-q^{\frac n2}\,a_1\,b_1\ldots a_{n+1}\,b_{n+1}\right).
\]
It follows that the eigenvalues of the matrix $\mathcal{A}_0$ are $q^{-\frac n2}\,t^{-1}$ and $q^{\frac n2}\,a_1\,b_1\ldots a_{n+1}\,b_{n+1}$.

(3) We can show by using \eqref{Proof:q-Garnier_1} and
\[
	\det\left(z\,I-\hat{M}_{i,i}\right) = \det\begin{pmatrix}z-\frac{a_i}{a_1}&{*}\\0&z-\frac{b_i}{a_1}\end{pmatrix} = \left(z-\frac{a_i}{a_1}\right)\left(z-\frac{b_i}{a_1}\right)\quad (i=1,\ldots,n+1).
\]

(4) We show for the case $n=2m-1$.
The matrix $\mathcal{B}$ is rewritten as
\[
	\mathcal{B} = z\left(\Gamma_{1,2}\,\hat{M}_{1,2}^{-1}\right) + \left(\Gamma_{1,1}-\Gamma_{1,2}\,\hat{M}_{1,2}^{-1}\,\hat{M}_{1,1}\right),
\]
where
\[
	\Gamma_{1,2}\,\hat{M}_{1,2}^{-1} = a_1\begin{pmatrix}1&0\\\gamma_{2,3}&t^{\frac{1}{n+1}}\end{pmatrix}\begin{pmatrix}-1&0\\{*}&-1\end{pmatrix} = -a_1\begin{pmatrix}1&0\\{*}&t^{\frac{1}{n+1}}\end{pmatrix}.
\]
Then we can describe the determinant of the matrix $\mathcal{B}$ as
\[
	\det\mathcal{B} = t^{\frac{1}{n+1}}\,a_1^2\left(z-\zeta_1\right)\left(z-\zeta_2\right).
\]
The compatibility condition of \eqref{Eq:q-Garnier_Lax_4} implies
\[
	\det\tau_2(\mathcal{A})\,\det\mathcal{B} = \det T_{q,z}(\mathcal{B})\,\det\mathcal{A},
\]
which is described as
\[
	q^2\left(z-\frac{b_m}{q\,a_1}\right)\left(z-\frac{b_{n+1}}{q\,a_1}\right)\left(z-\zeta_1\right)\left(z-\zeta_2\right) = \left(q\,z-\zeta_1\right)\left(q\,z-\zeta_2\right)\left(z-\frac{b_m}{a_1}\right)\left(z-\frac{b_{n+1}}{a_1}\right).
\]
Then we obtain
\[
	\zeta_1 = \frac{b_m}{a_1},\quad
	\zeta_2 = \frac{b_{n+1}}{a_1}.
\]
We can show for the case $n=2m$ in a similar manner.

\section*{Acknowledgement}

The authors would like to express his gratitude to Professors Rei Inoue, Tetsu Masuda, Teruhisa Tsuda and Yasuhiko Yamada for helpful comments and advices.
This work was supported by JSPS KAKENHI Grant Number 15K04911.


\end{document}